\providecommand{\U}[1]{\protect\rule{.1in}{.1in}}
\newtheorem{theorem}{Theorem}[section]
\newtheorem{corollary}[theorem]{Corollary}
\newtheorem{assumption}[theorem]{Assumption}
\newtheorem{example}[theorem]{Example}
\newtheorem{lemma}[theorem]{Lemma}
\newtheorem{proposition}[theorem]{Proposition}
\newtheorem{remark}[theorem]{Remark}
\numberwithin{equation}{section}
\newenvironment{breakablealgorithm}
  {
   \begin{center}
     \refstepcounter{algorithm}
     \hrule height.8pt depth0pt \kern2pt
     \renewcommand{\caption}[2][\relax]{
       {\raggedright\textbf{\ALG@name~\thealgorithm} ##2\par}%
       \ifx\relax##1\relax 
         \addcontentsline{loa}{algorithm}{\protect\numberline{\thealgorithm}##2}%
       \else 
         \addcontentsline{loa}{algorithm}{\protect\numberline{\thealgorithm}##1}%
       \fi
       \kern2pt\hrule\kern2pt
     }
  }{
     \kern2pt\hrule\relax
   \end{center}
  }
\begin{document}

\title{A Modified Method of Successive Approximations for Stochastic Recursive Optimal
Control Problems}
\author{Shaolin Ji\thanks{Zhongtai Securities Institute for Financial Studies,
Shandong University, Jinan, Shandong 250100, PR China. Email: jsl@sdu.edu.cn
(Corresponding author). Research supported by the National Natural Science
Foundation of China (No. 11971263; 11871458).}
\and Rundong Xu\thanks{Zhongtai Securities Institute for Financial Studies,
Shandong University, Jinan, Shandong 250100, PR China. Email:
rundong.xu@mail.sdu.edu.cn.}}
\maketitle

\textbf{Abstract}. Based on the stochastic maximum principle
for the partially coupled forward-backward stochastic
control system (FBSCS for short), a modified method of successive approximations
(MSA for short) is established for stochastic recursive optimal control problems.
The second-order adjoint processes are introduced in the
augmented Hamiltonian minimization step since the control domain is not necessarily convex. Thanks to the theory of bounded mean oscillation martingales (BMO martingales for short), we give a delicate proof of the error estimate and then prove the convergence of the modified MSA algorithm. In a special case, we obtain a logarithmic convergence rate.
When the control domain is convex and compact, a sufficient condition which makes the control returned from the MSA algorithm be a near-optimal control is given for a class of linear FBSCSs.

{\textbf{Key words}. BMO martingales; Forward-backward stochastic differential equations;
Method of successive approximations; Stochastic maximum principle;
Stochastic recursive optimal control }

\textbf{MSC subject classifications.} 93E20, 60H10, 60H30, 49M05

\addcontentsline{toc}{section}{\hspace*{1.8em}Abstract}

\section{Introduction}

Finding numerical solutions to
optimal control problems by scientific computing methods has attracted much attention in recent years.
As one of those methods, the method of successive approximations (MSA for short) is an
efficient tool to tackle optimal control problems. Compared
with the algorithms based on the dynamic programming approach (for example,
the Bellman-Howard policy iteration algorithm in \cite{BDL-Howard-2020}), the
MSA is an iterative method equipped with alternating propagation and
optimization steps based on the maximum principle.
New application of the modified MSA to a deep learning problem has been investigated
recently in \cite{WeinanE-2018},
which leads to an alternative approach to training the deep neural networks from the
deterministic optimal control viewpoint.

The MSA based on Pontryagin's maximum principle \cite{Pontryagin} for seeking
numerical solutions to deterministic control systems was first proposed by
Krylov et al. \cite{IA-Krylov1}. This method includes successive integrations
of the state and adjoint equations, and updates the control variables by
minimizing the Hamiltonian. After that, many improved modifications of the MSA
have been developed by researchers for a variety of deterministic control systems (\cite{AAL2, IA-Krylov2, WeinanE-2018, AAL1}).

A recent breakthrough in investigating the modified MSA for classical stochastic control systems
can be found in \cite{BDL-MSA-2020}, where the convergence result is based on the local stochastic maximum
principle (SMP for short, see Theorem 4.12 in \cite{Carmona}). This provides a policy-updating algorithm
to find the local optimal control candidates to the classical stochastic control systems, which improved their former result
in \cite{BDL-Howard-2020} that is only capable of handling such kind of controlled dynamics with no control variables in the diffusion part.
Nevertheless, in order to obtain the convergence of the modified MSA, the authors assumed "$D_{x}^{2}%
\sigma(\cdot)\equiv0$" to eliminate the impact of the unboundedness of $q^{u}$ (see (\ref{1st-adj-eq})) when they deduce the error estimates.
Furthermore, since their modified MSA is based on the local SMP, it may fail to deal with the case when the control
domain is non-convex. Thus, there are two natural questions that whether the above strong assumption can be weakened
and how to modify the MSA to be applicable to the control problems with general control domains.

To go a further step, it has become increasingly clear that the modified MSA calls for an extension from the classical stochastic
control system to a more general one with a non-convex control domain and weaker assumptions imposed on coefficients.
Therefore, the main goal of this paper is to establish the modified MSA for
the stochastic recursive optimal control problem which the state equation is described by a partially coupled forward-backward stochastic
differential equation (FBSDE for short, see \cite{HuYing-Peng95},
\cite{Ma-Yong-Protter}, \cite{Zhang17} and the references therein), and deduce the convergence of it.
This kind of wider optimal control problem is closely related to the stochastic differential
utility which plays an important role in the study of economic and financial fields such as the preference difference,
the asset pricing, and the continuous-time general equilibrium in security markets (see
\cite{Epstein92, Karoui97, Quenez03} and the references therein).

More than that, we study the general case when the control domain may be
non-convex. For this purpose, the construction of the modified MSA needs to base on
the general SMP for the forward-backward stochastic control system (FBCSC for short).
As for the general SMP, Peng \cite{Peng90} first
established the general SMP for classical stochastic control systems. Then,
numerous progress has been made for various stochastic control systems
(\cite{Fuhrman-Hu13, Peng93, Tang98, Tang-Li94, YongZhou}). Recently, Hu \cite{Hu17}
introduced two adjoint equations to obtain the SMP for FBSCSs governed
by partially coupled FBSDEs and solved the open problem proposed by Peng
\cite{Peng99}. Inspired by Hu's work, Hu et al. \cite{HuJiXue18}\ lately proposed a new
method to obtain the first and second-order variational equations which are
essentially fully coupled FBSDEs, and derived the SMP for fully coupled FBSCSs.

Our main contributions are as follows. Firstly, we established a modified MSA for stochastic recursive optimal
control problems subject to the partially coupled FBSCS
(\ref{state-eq}) with a general control domain and proved it converges to a local minimum of the original control problem,
which completely covers the results obtained in \cite{BDL-MSA-2020}. It is worth pointing out that the
challenge to obtain the desired error estimate is the unboundedness of the
solution $q^{u}$ to the adjoint equation (\ref{1st-adj-eq}). As mentioned
earlier, this technical difficulty was avoided if we impose the restrictive
assumption "$D_{x}^{2}\sigma(\cdot)\equiv0$". Fortunately, we found that the stochastic integral
$q^{u}\cdot W$ is a multi-dimensional BMO martingale, and substantially benefit from
the harmonic analysis on the space of BMO martingales developed for tackling certain backward stochastic
differential equations with unbounded coefficients by Delbaen and Tang \cite{Delbaen-Tang2010}.
Due to some useful inequalities, in particular the probabilistic version of
Fefferman's inequality, we obtained the error estimate which is critical to the convergence of our modified MSA.
This also indicates that we can remove the above unnecessary assumption
imposed on the diffusion coefficient by employing the BMO property of $q^{u}\cdot W$.

Secondly, in contrast to the classical stochastic control system, the emergence of $Z^{u}$ in the
backward state equation of (\ref{state-eq}) makes the error estimate more
difficult and complicated. By applying the Girsanov transformation,
the process (\ref{zeta}) disappears in the drift term under a new reference probability measure.
It should be emphasized that the BMO property of any martingale under this new reference probability
measure can be inherited from the corresponding one under the original probability measure.
Then we get the error estimate (\ref{est-Jv-Ju}) successfully. Furthermore, since the control domain need not be convex, the augmented
Hamiltonian contains the second-order adjoint process $P^{u}$ (see
(\ref{2nd-adj-eq})) whose boundedness is essential to obtain the error
estimate (\ref{est-Jv-Ju}). We proved that the boundedness of $P^{u}$ depends on the BMO property of $q^{u}\cdot W$.

Thirdly, as the number of the iterations $m$ increases, we obtain a $\frac{1}{m}$-order convergence rate of
the stochastic control system only driven by a forward stochastic differential equation and the cost functional is quadratic both in the state and control processes.
In addition, from the viewpoint of the near-optimality \cite{Zhou-XY}, we also prove the control returned from the MSA algorithm is near-optimal
for a class of linear forward-backward stochastic recursive problems independent of $z$, when the control domain is convex and compact.

The rest of the paper is organized as follows. In section 2, preliminaries and
the formulation of our problem are given. In section 3, we first show
properties of the solutions to the adjoint equations, and then state our main
results consisting of the error estimate and the convergence of our modified MSA algorithm.
The results about the convergence rate and the sufficient condition of the near-optimality are
also given as applications of the modified MSA algorithm.
In section 4, we provide numerical demonstrations to illustrate the general results.

\section{Preliminaries and Problem Formulation}

Fix a terminal value $T>0$ and three positive integers $n$, $d$
and $k$. Let $(\Omega,\mathcal{F},\mathbb{P})$ be a complete probability space
on which a standard $d$-dimensional Brownian motion $W=(W_{t}^{1},W_{t}%
^{2},...W_{t}^{d})_{t\in\lbrack0,T]}^{\intercal}$ is defined, and
$\mathbb{F}:\mathbb{=}\left\{  \mathcal{F}_{t}\right\}  _{t\in\lbrack0,T]}$ be
the $\mathbb{P}$-augmentation of the natural filtration generated by $W$.

Denote by $\mathbb{R}^{n}$ the $n$-dimensional real Euclidean space,
$\mathbb{R}^{n\times m}$ the set of $n\times m$ real matrices ($n,m\geq1$) and
$\mathbb{S}^{n\times n}$ the set of all $n\times n$ symmetric matrices. The
scalar product (resp. norm) of $A$, $B\in\mathbb{R}^{n\times m}$ is denoted by
$\left( A,B\right) =\mathrm{tr}\{AB^{\intercal}\}$ (resp.
$\left\vert A\right\vert =\sqrt{\mathrm{tr}\left\{  AA^{\intercal}\right\}  }%
$), where the superscript $^{\intercal}$ denotes the transpose of vectors or
matrices. Denote by $I_{n}$ the $n \times n$ identity matrix.

For any given $p,q\geq1$, we introduce the following Banach spaces.

\noindent$L_{\mathcal{F}_{T}}^{p}(\Omega;\mathbb{R}^{n})$: the space of $\mathcal{F}%
_{T}$-measurable $\mathbb{R}^{n}$-valued random variables $\xi$ such that
$\mathbb{E}\left[  \left\vert \xi\right\vert ^{p}\right]  <\infty$.

\noindent$L_{\mathcal{F}_{T}}^{\infty}(\Omega;\mathbb{R}^{n})$: the space of
$\mathcal{F}_{T}$-measurable $\mathbb{R}^{n}$-valued random variables $\xi$
such that $\underset{\omega\in\Omega}{\mathrm{ess~sup}}\left\vert \xi(\omega)
\right\vert <\infty$.

\noindent$L_{\mathcal{F}}^{\infty}([0,T];\mathbb{R}^{n})$: the space of $\mathbb{F}%
$-adapted $\mathbb{R}^{n}$-valued processes $\varphi$ defined on $[0,T]$ such that
\[
\left\Vert \varphi\right\Vert _{\infty}:=\underset{(t,\omega)\in
\lbrack0,T]\times\Omega}{\mathrm{ess~sup}}\left\vert \varphi_{t}\left(
\omega\right)  \right\vert <\infty.
\]

\noindent$\mathcal{S}_{\mathcal{F}}^{p}([0,T];\mathbb{R}^{n})$: the space of
$\mathbb{F}$-adapted $\mathbb{R}^{n}$-valued continuous processes $\varphi$ such that $\mathbb{E}\left[
\sup\limits_{t\in\lbrack0,T]}\left\vert \varphi_{t}\right\vert ^{p}\right]
<\infty$.

\noindent$\mathcal{H}_{\mathcal{F}}^{p}([0,T];\mathbb{R}^{n})$: the space of
$\mathbb{R}^{n}$-valued $\mathbb{F}$-martingales $M=\left(  M^{1},\ldots
,M^{n}\right)  ^{\intercal}$ with continuous pathes such that $M_{0}=0$ and $\left\Vert M\right\Vert
_{\mathcal{H}^{p}}:=\left\Vert \sqrt{\mathrm{tr}\left\{  \left\langle
M\right\rangle _{T}\right\}  }\right\Vert _{L^{p}}<\infty$, where%
\[
\left\langle M\right\rangle _{t}:=\left(  \left\langle M^{i},M^{j}%
\right\rangle _{t}\right)  _{1\leq i,j\leq n}\text{ \ for \ }t\in\lbrack0,T].
\]

\noindent$\mathcal{M}_{{}}^{p}(\mathbb{R}^{n\times d})$: the space of $\mathbb{R}%
^{n\times d}$-valued $\mathbb{F}$-progressively measurable processes $\varphi$
defined on $[0,T]$ such that%
\[
\left\Vert \varphi\right\Vert _{\mathcal{M}_{{}}^{p}}:=\left(  \mathbb{E}%
\left[  \left(  \int_{0}^{T}\left\vert \varphi_{t}\right\vert ^{2}dt\right)
^{\frac{p}{2}}\right]  \right)  ^{\frac{1}{p}}<\infty.
\]

\noindent$\mathrm{BMO}$: the space of processes $M\in\mathcal{H}_{\mathcal{F}}^{2}%
([0,T];\mathbb{R})$ such that%
\begin{equation}
\left\Vert M\right\Vert _{\mathrm{BMO}}:=\sup_{\tau}\left\Vert \left(  \mathbb{E}%
\left[  \left\langle M\right\rangle _{T}-\left\langle M\right\rangle _{\tau
}\mid\mathcal{F}_{\tau}\right]  \right)  ^{\frac{1}{2}}\right\Vert _{\infty
}<\infty, \label{def-BMO}%
\end{equation}
where the supremum is taken over all stopping times $\tau\in\mathcal{[}%
0,T\mathcal{]}$. Furthermore, one can replace $\tau$ with all deterministic
times $t\in\lbrack0,T]$ in definition (\ref{def-BMO}).

\noindent$\mathcal{K}(\mathbb{R}^{n\times d})$: the space of $\mathbb{R}^{n\times d}%
$-valued processes $\varphi\in\mathcal{M}_{{}}^{2}(\mathbb{R}^{n\times d})$
such that%
\begin{equation}
\left\Vert \varphi\right\Vert _{\mathcal{K}}:=\sup_{\tau}\left\Vert \left(
\mathbb{E}\left[  \int_{\tau}^{T}\left\vert \varphi_{s}\right\vert ^{2}%
ds\mid\mathcal{F}_{\tau}\right]  \right)  ^{\frac{1}{2}}\right\Vert _{\infty
}<\infty, \label{def-BMO-multi}%
\end{equation}
where the supremum is taken over all stopping times $\tau\in\mathcal{[}%
0,T\mathcal{]}$. Furthermore, one can replace $\tau$ with all deterministic
times $t\in\lbrack0,T]$ in definition (\ref{def-BMO-multi}).

We write $\mathrm{BMO}(\mathbb{Q})$ and $\mathcal{K}(\mathbb{R}^{n\times d}%
;\mathbb{Q})$ for any probability $\mathbb{Q}$ defined on $\left(
\Omega,\mathcal{F}\right)  $ whenever it is necessary to indicate the
underlying probability. For simplicity, if the underlying probability is
$\mathbb{P}$, we still use the notations $\mathrm{BMO}$ and $\mathcal{K}(\mathbb{R}%
^{n\times d})$.

\subsection{Some Notations and Results of BMO Martingales}

Here we list some notations and results of BMO martingales, which will be used
in this paper. We refer the readers to \cite{Delbaen-Tang2010}, \cite{HWY},
\cite{Kazamaki} the references therein for more details.

Denote by $\mathcal{E}\left(  M\right)  $ the Dol\'{e}ans-Dade exponential of
a continuous local martingale $M$, that is, $\mathcal{E}\left(  M_{t}\right)
=\exp\left\{  M_{t}-\frac{1}{2}\left\langle M\right\rangle _{t}\right\}  $ for
any $t\in\lbrack0,T].$ If $M\in \mathrm{BMO}$, then $\mathcal{E}(M)$ is a uniformly
integrable martingale (see Theorem 2.3 in \cite{Kazamaki}).

Let $H$ be an $\mathbb{R}^{d}$-valued $\mathbb{F}$-adapted process. Denote by
$H\cdot W$ the stochastic integral of $H$ with respect to the $d$-dimensional
Brownian motion $W$, that is, $\left(  H\cdot W\right)  _{t}:=\sum_{i=1}%
^{d}\int_{0}^{t}H_{s}^{i}dW_{s}^{i}$ for $t\in\lbrack0,T]$.

The following theorem plays an significant role in characterizing the duality
between $\mathcal{H}_{\mathcal{F}}^{1}([0,T];\mathbb{R})$ and $\mathrm{BMO}$.

\begin{theorem}
[\cite{HWY}, Theorem 10.18]
\label{thm-Fefferman-ineq}
Let $N\in\mathcal{H}_{\mathcal{F}}^{1}([0,T];\mathbb{R})$, $M\in \mathrm{BMO}$,
and $\varphi$ be an $\mathbb{F}$-progressive measurable process such that $\mathbb{E} \left[ \left( \int_{0}^{T} \left\vert \varphi_{t} \right\vert^{2} d\langle N \rangle_{t}
\right)^{\frac{1}{2}} \right]<\infty$. Then, for any stopping time $\tau$ in $[0,T]$,
\[
\mathbb{E}\left[  \int_{\tau}^{T}\left\vert \varphi_{s}\right\vert \left\vert
d\left\langle M,N\right\rangle _{s}\right\vert \mid\mathcal{F}_{\tau}\right]
\leq\sqrt{2}\mathbb{E}\left[  \left(  \int_{\tau}^{T}\left\vert \varphi
_{s}\right\vert ^{2}d\left\langle N\right\rangle _{s}\right)  ^{\frac{1}{2}%
}\mid\mathcal{F}_{\tau}\right]  \left\Vert M\right\Vert _{\mathrm{BMO}}.
\]
Particularly, when $\tau=0$ and $\varphi=1$, we have
\[
\mathbb{E}\left[  \int_{0}^{T} \left\vert
d\left\langle M,N\right\rangle _{s}\right\vert \right]
\leq\sqrt{2} \left\Vert M\right\Vert _{\mathrm{BMO}} \left\Vert N\right\Vert _{\mathcal{H}^{1}},
\]
which is well known as Fefferman's inequality.
\end{theorem}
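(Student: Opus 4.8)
The statement is the (conditional) Fefferman inequality, and the plan is to deduce it from the Kunita--Watanabe inequality by inserting a carefully chosen weight built from the tail of the $N$-energy, reserving the $\mathrm{BMO}$ hypothesis on $M$ for the very last step. Throughout write $R_s:=\int_s^T|\varphi_r|^2\,d\langle N\rangle_r$, so that $R$ is continuous, nonincreasing, $R_T=0$, and $R_\tau=\int_\tau^T|\varphi_r|^2\,d\langle N\rangle_r$ is exactly the quantity appearing on the right-hand side; the hypothesis $\mathbb E[(\int_0^T|\varphi_t|^2\,d\langle N\rangle_t)^{1/2}]<\infty$ guarantees that $R_\tau^{1/2}$ is integrable and legitimizes the Fubini exchanges below.

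First I would apply the Kunita--Watanabe inequality to the pair of weights $g_s=R_s^{1/4}$ (attached to $\langle M\rangle$) and $h_s=|\varphi_s|R_s^{-1/4}$ (attached to $\langle N\rangle$), whose product is $|\varphi_s|$. This gives, pathwise,
\[
\int_\tau^T|\varphi_s|\,|d\langle M,N\rangle_s|\le\Big(\int_\tau^T R_s^{1/2}\,d\langle M\rangle_s\Big)^{1/2}\Big(\int_\tau^T R_s^{-1/2}|\varphi_s|^2\,d\langle N\rangle_s\Big)^{1/2}.
\]
The second factor is designed to telescope: since $R_s^{-1/2}|\varphi_s|^2\,d\langle N\rangle_s=-R_s^{-1/2}\,dR_s=-2\,d(R_s^{1/2})$, it equals $(2R_\tau^{1/2})^{1/2}=\sqrt2\,R_\tau^{1/4}$, which is precisely where the constant $\sqrt2$ originates. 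Taking $\mathbb E[\,\cdot\mid\mathcal F_\tau]$ and applying the conditional Cauchy--Schwarz inequality to the product $R_\tau^{1/4}\cdot(\int_\tau^T R_s^{1/2}\,d\langle M\rangle_s)^{1/2}$ then reduces everything to the single energy estimate
\[
\mathbb E\Big[\int_\tau^T R_s^{1/2}\,d\langle M\rangle_s\,\Big|\,\mathcal F_\tau\Big]\le\|M\|_{\mathrm{BMO}}^2\,\mathbb E\big[R_\tau^{1/2}\mid\mathcal F_\tau\big],
\]
after which the two displays combine to yield the claim with constant $\sqrt2$.

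The core of the argument --- and the step I expect to be the main obstacle --- is this last energy estimate, because the weight $R_s^{1/2}$ is decreasing but not $\mathbb F$-adapted (it looks into the future through the tail integral), so one cannot simply feed it into the defining bound $\mathbb E[\langle M\rangle_T-\langle M\rangle_\sigma\mid\mathcal F_\sigma]\le\|M\|_{\mathrm{BMO}}^2$ through a naive stopping-time/layer-cake decomposition. The plan here is to start from the Fubini identity $\int_\tau^T R_s^{1/2}\,d\langle M\rangle_s=\int_\tau^T(\langle M\rangle_u-\langle M\rangle_\tau)\,(-d R_u^{1/2})$ and integrate by parts so as to expose the forward tails $\langle M\rangle_T-\langle M\rangle_s$, and then to absorb the non-adaptedness by conditioning: one replaces the future-measurable weight by its optional projection before invoking the $\mathrm{BMO}$ energy bound on each forward stochastic interval, a step for which the continuity of the Brownian filtration is convenient. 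This is where the real work lies, and it is the reason the inequality is a theorem rather than a one-line consequence of Kunita--Watanabe.

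Finally, the classical Fefferman inequality in the second display is the special case $\tau=0$, $\varphi\equiv1$: taking full expectations and applying Cauchy--Schwarz gives $\mathbb E[\int_0^T|d\langle M,N\rangle_s|]\le\sqrt2\,\|M\|_{\mathrm{BMO}}\,\mathbb E[\langle N\rangle_T^{1/2}]$, and the right-hand factor is exactly $\|N\|_{\mathcal H^1}$ by definition of the $\mathcal H^1$-norm for the scalar martingale $N$.
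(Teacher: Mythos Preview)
The paper does not prove this theorem at all: it is stated with a citation to \cite{HWY}, Theorem 10.18, and used as a black box. So there is no ``paper's own proof'' to compare against.

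That said, your outline is the classical route to Fefferman's inequality (the one found, with minor variations, in He--Wang--Yan, Dellacherie--Meyer, Revuz--Yor, and Kazamaki): apply Kunita--Watanabe with the weight pair $R_s^{1/4}$ and $|\varphi_s|R_s^{-1/4}$, telescope the $N$-side to produce the $\sqrt{2}$, and reduce everything to the energy estimate
\[
\mathbb{E}\Big[\int_\tau^T R_s^{1/2}\,d\langle M\rangle_s\,\Big|\,\mathcal{F}_\tau\Big]\le \|M\|_{\mathrm{BMO}}^2\,\mathbb{E}\big[R_\tau^{1/2}\mid\mathcal{F}_\tau\big].
\]
You are right that this last step is the heart of the matter and that the non-adaptedness of $R_s^{1/2}$ is the obstruction. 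One small inaccuracy: your stated integration-by-parts identity produces the backward increment $\langle M\rangle_u-\langle M\rangle_\tau$, not the forward tail $\langle M\rangle_T-\langle M\rangle_s$; the forward tail appears only after a further Fubini rearrangement. Your plan to pass to the optional projection of the anticipating weight before invoking the $\mathrm{BMO}$ bound is the standard device here (equivalently, one conditions on $\mathcal{F}_s$ inside the $d\langle M\rangle_s$ integral, which is legitimate because $\langle M\rangle$ is adapted increasing), and it does close the argument---but as written this step is only gestured at, so if you were asked to supply a full proof you would need to spell out that projection/Fubini manoeuvre carefully.
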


For any $M \in \mathrm{BMO}$, the energy-type inequality for $\left\langle M
\right\rangle $ is a significant result commonly used in the BMO martingale theory
(see \cite{Kazamaki}). In essence, for any $\varphi\in\mathcal{K}%
(\mathbb{R}^{n\times d})$, $\mathbb{F}$-stopping time $\tau$ on $[0,T]$ and
$A\in\mathcal{F}_{\tau}$, we can apply Garsia's Lemma (\cite{HWY}, Lemma
10.35) to the continuous increasing process $\left(  1_{A}\int_{\tau}%
^{t}\left\vert \varphi_{s}\right\vert ^{2}ds\right)  _{t\in\lbrack\tau,T]}$ to
obtain the following energy-type inequality.

\begin{proposition}
[Energy inequality]\label{energy inequality}Let $\varphi\in\mathcal{K}%
(\mathbb{R}^{n\times d})$. Then, for any integer $m$ and $\mathbb{F}$-stopping
time $\tau$ on $[0,T]$, we have%
\[
\mathbb{E}\left[  \left(  \int_{\tau}^{T}\left\vert \varphi_{s}\right\vert
^{2}ds\right)  ^{m}\mid\mathcal{F}_{\tau}\right]  \leq m!\left\Vert
\varphi\right\Vert _{\mathcal{K}}^{2m}.
\]

\end{proposition}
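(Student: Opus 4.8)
The plan is to read the estimate off from Garsia's lemma, exactly along the lines indicated just before the statement. I fix an $\mathbb{F}$-stopping time $\tau$ on $[0,T]$ and an arbitrary event $A\in\mathcal{F}_{\tau}$, and consider the process $B_{t}:=1_{A}\int_{\tau}^{t}|\varphi_{s}|^{2}\,ds$ for $t\in[\tau,T]$, extended by $B_{t}:=0$ for $t<\tau$. It is continuous, nondecreasing, $\mathbb{F}$-adapted and null at the origin, so it is the right object for Garsia's lemma. The purpose of dragging along the localizing indicator $1_{A}$ is that, once a moment bound for $B_{T}$ has been produced and $A$ is allowed to range over $\mathcal{F}_{\tau}$, the resulting $\mathbb{P}(A)$ normalization will upgrade an unconditional energy estimate into the conditional one asserted in the proposition.

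The crucial step is to verify Garsia's hypothesis in its conditionally dominated form, i.e.\ to exhibit a nonnegative random variable $\zeta$ with $\mathbb{E}[B_{T}-B_{\sigma}\mid\mathcal{F}_{\sigma}]\leq\mathbb{E}[\zeta\mid\mathcal{F}_{\sigma}]$ for every stopping time $\sigma$. Here I would write $B_{T}-B_{\sigma}=1_{A}\int_{\sigma\vee\tau}^{T}|\varphi_{s}|^{2}\,ds$ and condition first through $\mathcal{F}_{\sigma\vee\tau}\supseteq\mathcal{F}_{\sigma}$, with respect to which $1_{A}$ is measurable since $A\in\mathcal{F}_{\tau}\subseteq\mathcal{F}_{\sigma\vee\tau}$. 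Invoking the defining property (\ref{def-BMO-multi}) of the $\mathcal{K}$-norm at the stopping time $\sigma\vee\tau$ then gives
\[
\mathbb{E}\left[B_{T}-B_{\sigma}\mid\mathcal{F}_{\sigma}\right]
=\mathbb{E}\left[1_{A}\,\mathbb{E}\Big[\int_{\sigma\vee\tau}^{T}|\varphi_{s}|^{2}\,ds\ \Big|\ \mathcal{F}_{\sigma\vee\tau}\Big]\ \Big|\ \mathcal{F}_{\sigma}\right]
\leq\mathbb{E}\left[\left\Vert\varphi\right\Vert_{\mathcal{K}}^{2}1_{A}\mid\mathcal{F}_{\sigma}\right],
\]
so the hypothesis holds with the dominating variable $\zeta:=\|\varphi\|_{\mathcal{K}}^{2}1_{A}$.

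Applying Garsia's lemma (\cite{HWY}, Lemma 10.35) then yields a moment bound of the form $\mathbb{E}[(B_{T})^{m}]\leq m!\,\mathbb{E}[\zeta^{m}]=m!\,\|\varphi\|_{\mathcal{K}}^{2m}\mathbb{P}(A)$, where I used $1_{A}^{m}=1_{A}$. Since likewise $(B_{T})^{m}=1_{A}\big(\int_{\tau}^{T}|\varphi_{s}|^{2}\,ds\big)^{m}$, this reads $\mathbb{E}\big[1_{A}(\int_{\tau}^{T}|\varphi_{s}|^{2}\,ds)^{m}\big]\leq m!\,\|\varphi\|_{\mathcal{K}}^{2m}\mathbb{P}(A)$, and because $A\in\mathcal{F}_{\tau}$ is arbitrary the characterization of conditional expectation delivers $\mathbb{E}\big[(\int_{\tau}^{T}|\varphi_{s}|^{2}\,ds)^{m}\mid\mathcal{F}_{\tau}\big]\leq m!\,\|\varphi\|_{\mathcal{K}}^{2m}$ almost surely.

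I expect the main obstacle to be the second step rather than the invocation of Garsia's lemma: one must pin down the sharp dominating variable $\|\varphi\|_{\mathcal{K}}^{2}1_{A}$, keeping the factor $1_{A}$ rather than merely bounding it by the constant $\|\varphi\|_{\mathcal{K}}^{2}$, since it is precisely the ensuing $\mathbb{P}(A)$ normalization that turns the unconditional energy bound into a genuinely conditional estimate. This forces some care with the measurability bookkeeping around the random lower limit $\tau$, which is exactly why the increments are evaluated at $\sigma\vee\tau$ and conditioned through $\mathcal{F}_{\sigma\vee\tau}$; the precise form in which one cites Garsia's lemma (its unconditional version combined with this localization, or its conditional version applied directly to $\int_{\tau}^{\cdot}|\varphi_{s}|^{2}\,ds$) only affects the packaging, not the estimate.
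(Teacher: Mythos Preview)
Your proposal is correct and follows exactly the approach the paper indicates: the paper does not give a full proof but only the hint immediately preceding the statement, namely to apply Garsia's Lemma (\cite{HWY}, Lemma 10.35) to the process $\big(1_{A}\int_{\tau}^{t}|\varphi_{s}|^{2}\,ds\big)_{t\in[\tau,T]}$ for arbitrary $A\in\mathcal{F}_{\tau}$. You have carried out this outline faithfully, supplying the details the paper omits --- in particular the verification of Garsia's hypothesis with the dominating variable $\zeta=\|\varphi\|_{\mathcal{K}}^{2}1_{A}$ via conditioning through $\mathcal{F}_{\sigma\vee\tau}$, and the passage from the unconditional bound to the conditional one by varying $A$ over $\mathcal{F}_{\tau}$.
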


Recall that the space BMO depends on the underlying probability measure. The
following lemma shows the equivalence of different BMO-norms under the
Girsanov transformation.

\begin{lemma}
[\cite{Hu-Tang2016}, Lemma A.4]\label{lem-equiv-BMO-norm} Let $K>0$ be a given
constant and $M$ be in BMO. Then, there are constants $c_{1}>0$ and
$c_{2}>0$ depending only on $K$ such that for any $N \in \mathrm{BMO}$ and $\left\Vert
N\right\Vert _{\mathrm{BMO}}\leq K$, we have
\[
c_{1}\left\Vert M\right\Vert _{\mathrm{BMO}}\leq\left\Vert \tilde{M}\right\Vert
_{\mathrm{BMO}(\mathbb{\tilde{P}})}\leq c_{2}\left\Vert M\right\Vert _{\mathrm{BMO}},
\]
where $\tilde{M}:=M-\left\langle M,N\right\rangle $ and $d\mathbb{\tilde{P}%
}:=\mathcal{E}\left(  N_{T}\right)  d\mathbb{P}$.
\end{lemma}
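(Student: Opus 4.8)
The plan is to reduce the claim to a uniform comparison of two conditional expectations of the same increasing process, and then to control their ratio by a reverse Hölder inequality for the density together with the energy inequality of Proposition~\ref{energy inequality}. First I would exploit that the predictable quadratic variation is insensitive to an absolutely continuous change of measure: since $\langle M,N\rangle$ has paths of finite variation, $\langle\tilde M\rangle=\langle M\rangle$, and by Girsanov's theorem $\tilde M$ is a $\tilde{\mathbb P}$-local martingale. Hence, writing $A_\tau:=\langle M\rangle_T-\langle M\rangle_\tau\ge 0$ and squaring the definition (\ref{def-BMO}),
\[
\|\tilde M\|_{\mathrm{BMO}(\tilde{\mathbb P})}^2=\sup_\tau\left\|\tilde{\mathbb E}\left[A_\tau\mid\mathcal F_\tau\right]\right\|_\infty,\qquad \|M\|_{\mathrm{BMO}}^2=\sup_\tau\left\|\mathbb E\left[A_\tau\mid\mathcal F_\tau\right]\right\|_\infty,
\]
so the whole statement becomes a two-sided comparison, uniform over stopping times $\tau$, between $\tilde{\mathbb E}[A_\tau\mid\mathcal F_\tau]$ and $\mathbb E[A_\tau\mid\mathcal F_\tau]$.

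For the upper bound I would pass from $\tilde{\mathbb P}$ to $\mathbb P$ by Bayes' rule,
\[
\tilde{\mathbb E}\left[A_\tau\mid\mathcal F_\tau\right]=\mathbb E\left[\frac{\mathcal E(N)_T}{\mathcal E(N)_\tau}\,A_\tau\;\Big|\;\mathcal F_\tau\right],
\]
and apply the conditional Hölder inequality with conjugate exponents $p,p'$. The factor $(\mathbb E[(\mathcal E(N)_T/\mathcal E(N)_\tau)^p\mid\mathcal F_\tau])^{1/p}$ is bounded by a constant depending only on $K$: this is the reverse Hölder inequality $R_p(\mathbb P)$ satisfied by $\mathcal E(N)$, valid for some exponent $p=p(K)>1$ because $\|N\|_{\mathrm{BMO}}\le K$. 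The factor $(\mathbb E[A_\tau^{p'}\mid\mathcal F_\tau])^{1/p'}$ is controlled by $\|M\|_{\mathrm{BMO}}^2$: on the Brownian filtration $M=\varphi\cdot W$ with $\varphi\in\mathcal K(\mathbb R^{1\times d})$ and $\|\varphi\|_{\mathcal K}=\|M\|_{\mathrm{BMO}}$, so Proposition~\ref{energy inequality} bounds every conditional integer moment of $A_\tau$ by the corresponding power of $\|M\|_{\mathrm{BMO}}$, and conditional Jensen upgrades this to the real exponent $p'$. Multiplying the two factors and taking the supremum over $\tau$ gives $\|\tilde M\|_{\mathrm{BMO}(\tilde{\mathbb P})}\le c_2\|M\|_{\mathrm{BMO}}$ with $c_2=c_2(K)$.

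For the lower bound I would run the same argument with the roles of $\mathbb P$ and $\tilde{\mathbb P}$ interchanged. Set $\bar N:=-(N-\langle N\rangle)$, so that the inverse density is $\mathcal E(N)^{-1}=\mathcal E(\bar N)$; by Girsanov $\bar N$ is a $\tilde{\mathbb P}$-local martingale, and the upper bound just proved (applied to the scalar BMO martingale $N$) shows $\|\bar N\|_{\mathrm{BMO}(\tilde{\mathbb P})}\le c_2(K)K$. A direct bracket computation gives $\langle\tilde M,\bar N\rangle=-\langle M,N\rangle$, whence $\tilde M-\langle\tilde M,\bar N\rangle=M$; that is, $M$ is exactly the Girsanov transform of $\tilde M$ under the reverse change of measure $d\mathbb P=\mathcal E(\bar N)_T\,d\tilde{\mathbb P}$. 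Applying the upper-bound estimate in this reverse setting, with base measure $\tilde{\mathbb P}$, martingale $\tilde M$, and density controlled through $\|\bar N\|_{\mathrm{BMO}(\tilde{\mathbb P})}\le c_2(K)K$, yields $\|M\|_{\mathrm{BMO}}\le c(K)\|\tilde M\|_{\mathrm{BMO}(\tilde{\mathbb P})}$, i.e. the desired lower bound with $c_1=1/c(K)$.

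The main obstacle is securing the reverse Hölder inequality with both the exponent $p$ and its constant depending only on the bound $K$, uniformly over the whole ball $\{\|N\|_{\mathrm{BMO}}\le K\}$. This rests on the John--Nirenberg-type exponential integrability of BMO martingales: because $\|N\|_{\mathrm{BMO}}\le K$, the conditional Laplace transform of $\langle N\rangle_T-\langle N\rangle_\tau$ is finite and controlled uniformly, from which the $R_{p(K)}(\mathbb P)$ property of $\mathcal E(N)$ follows by Kazamaki's theory. Once this uniform control of the density is in hand, the two applications of conditional Hölder and the energy inequality are routine, and the careful bookkeeping that every constant depends only on $K$ is precisely what makes both bounds hold simultaneously for all admissible $N$.
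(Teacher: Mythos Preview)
The paper does not prove this lemma; it is quoted verbatim from \cite{Hu-Tang2016}, Lemma~A.4, and used as a black box. So there is no ``paper's own proof'' to compare against. Your argument is correct and is in fact the standard route (essentially Kazamaki \cite{Kazamaki}, Theorem~3.6, combined with the reverse H\"older inequality): reduce both BMO norms to conditional expectations of the same increment $\langle M\rangle_T-\langle M\rangle_\tau$, control the density ratio via $R_p(\mathbb P)$ with $p=p(K)$, control the $p'$-moment of the increment via the energy inequality, and obtain the lower bound by symmetry using $\mathcal E(N)^{-1}=\mathcal E(\bar N)$ with $\bar N=-(N-\langle N\rangle)$ a $\tilde{\mathbb P}$-BMO martingale of norm $\le c_2(K)K$.

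One small remark: your use of martingale representation ``$M=\varphi\cdot W$'' to invoke Proposition~\ref{energy inequality} is legitimate in this paper because $\mathbb F$ is the augmented Brownian filtration, but the energy inequality holds for any BMO martingale directly from Garsia's lemma applied to $t\mapsto\langle M\rangle_t-\langle M\rangle_\tau$, so this detour is not strictly needed.
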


The following proposition is a more profound result by applying Fefferman's inequality.

\begin{proposition}
[\cite{Delbaen-Tang2010}, Lemma 1.4]
\label{prop-Hp-BMO}
Let $p\geq1$. Assume that $X\in\mathcal{S}_{\mathcal{F}}^{p}([0,T];\mathbb{R})$ and $M\in \mathrm{BMO}$.
Then, $X\cdot M\in\mathcal{H}_{\mathcal{F}}^{p}([0,T];\mathbb{R})$. Moreover,
we have the following estimate%
\begin{equation*}
\left\Vert X\cdot M\right\Vert _{\mathcal{H}^{p}}\leq\sqrt{2}\left\Vert
X\right\Vert _{\mathcal{S}^{p}}\left\Vert M\right\Vert _{\mathrm{BMO}}
\end{equation*}
for $p>1$ and%
\begin{equation*}
\left\Vert X\cdot M\right\Vert _{\mathcal{H}^{1}}\leq\left\Vert X\right\Vert
_{\mathcal{S}^{1}}\left\Vert M\right\Vert _{\mathrm{BMO}}.
\end{equation*}
\end{proposition}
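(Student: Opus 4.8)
The plan is to establish the quantitative estimate directly, from which membership in $\mathcal{H}^{p}$ follows automatically: once $\Vert X\cdot M\Vert_{\mathcal{H}^{p}}$ is shown to be finite, the continuous local martingale $X\cdot M$, which starts at $0$, is genuinely an element of $\mathcal{H}_{\mathcal{F}}^{p}([0,T];\mathbb{R})$. Two elementary observations drive everything. First, since $M$ is scalar, $\langle X\cdot M\rangle_{t}=\int_{0}^{t}X_{s}^{2}\,d\langle M\rangle_{s}$, so that $\Vert X\cdot M\Vert_{\mathcal{H}^{p}}=\Vert(\int_{0}^{T}X_{s}^{2}\,d\langle M\rangle_{s})^{1/2}\Vert_{L^{p}}$. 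Second, for any continuous martingale $L$ one has $d\langle X\cdot M,L\rangle_{s}=X_{s}\,d\langle M,L\rangle_{s}$. The conceptual difficulty is that $M\in\mathrm{BMO}$ controls $\langle M\rangle$ only through conditional expectations and not pathwise, so the square function cannot be dominated naively; Fefferman's inequality (Theorem \ref{thm-Fefferman-ineq}) is exactly the device that converts this conditional control into an $\mathcal{H}^{p}$ bound, and I would access it by dualizing the $\mathcal{H}^{p}$ norm.

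For $p>1$ I would use the duality between $\mathcal{H}_{\mathcal{F}}^{p}([0,T];\mathbb{R})$ and $\mathcal{H}_{\mathcal{F}}^{q}([0,T];\mathbb{R})$, $\tfrac{1}{p}+\tfrac{1}{q}=1$, under the pairing $(N,L)\mapsto\mathbb{E}[\langle N,L\rangle_{T}]$, so that $\Vert X\cdot M\Vert_{\mathcal{H}^{p}}=\sup\{\mathbb{E}[\langle X\cdot M,L\rangle_{T}]:\Vert L\Vert_{\mathcal{H}^{q}}\leq1\}$. Fixing such an $L$, the pathwise bound $\int_{0}^{T}X_{s}^{2}\,d\langle L\rangle_{s}\leq(X_{T}^{\ast})^{2}\langle L\rangle_{T}$, with $X_{T}^{\ast}:=\sup_{t\in[0,T]}|X_{t}|$, first verifies the integrability hypothesis $\mathbb{E}[(\int_{0}^{T}X_{s}^{2}\,d\langle L\rangle_{s})^{1/2}]\leq\Vert X\Vert_{\mathcal{S}^{p}}\Vert L\Vert_{\mathcal{H}^{q}}<\infty$ needed to apply Theorem \ref{thm-Fefferman-ineq}. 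Then
\begin{equation*}
\mathbb{E}\big[\langle X\cdot M,L\rangle_{T}\big]=\mathbb{E}\Big[\int_{0}^{T}X_{s}\,d\langle M,L\rangle_{s}\Big]\leq\mathbb{E}\Big[\int_{0}^{T}|X_{s}|\,\big|d\langle M,L\rangle_{s}\big|\Big],
\end{equation*}
and Theorem \ref{thm-Fefferman-ineq} with $\tau=0$, $\varphi=|X|$ and $N=L$, followed by the same pathwise domination and Hölder's inequality, gives
\begin{equation*}
\mathbb{E}\big[\langle X\cdot M,L\rangle_{T}\big]\leq\sqrt{2}\,\Vert M\Vert_{\mathrm{BMO}}\,\mathbb{E}\big[X_{T}^{\ast}\,\langle L\rangle_{T}^{1/2}\big]\leq\sqrt{2}\,\Vert X\Vert_{\mathcal{S}^{p}}\,\Vert L\Vert_{\mathcal{H}^{q}}\,\Vert M\Vert_{\mathrm{BMO}}.
\end{equation*}
Taking the supremum over $L$ yields $\Vert X\cdot M\Vert_{\mathcal{H}^{p}}\leq\sqrt{2}\,\Vert X\Vert_{\mathcal{S}^{p}}\Vert M\Vert_{\mathrm{BMO}}$. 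The one point needing care is the duality with its correct constant; I would either invoke the standard isometric $\mathcal{H}^{p}$--$\mathcal{H}^{q}$ duality for continuous martingales, or, to keep the constant transparent, test against a single explicitly constructed $L$ built from $\langle X\cdot M\rangle$.

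The endpoint $p=1$ is where I expect the main obstacle to lie, because the scheme above would require the pairing dual $\mathcal{H}^{\infty}$, and $\Vert\langle L\rangle_{T}^{1/2}\Vert_{\infty}$ is strictly stronger than $\Vert L\Vert_{\mathrm{BMO}}$, so the clean final step fails; moreover a crude passage through $\mathcal{H}^{1}$--$\mathrm{BMO}$ duality would only produce the constant $\sqrt{2}$ rather than the sharp $1$ asserted. Here I would argue directly on $\mathbb{E}[(\int_{0}^{T}X_{s}^{2}\,d\langle M\rangle_{s})^{1/2}]$, replacing $X_{s}$ by the maximal process $X_{s}^{\ast}$ and using the integration-by-parts identity $\int_{0}^{T}(X_{s}^{\ast})^{2}\,d\langle M\rangle_{s}=(X_{0}^{\ast})^{2}\langle M\rangle_{T}+\int_{0}^{T}(\langle M\rangle_{T}-\langle M\rangle_{r})\,d(X_{r}^{\ast})^{2}$ together with the conditional bound $\mathbb{E}[\langle M\rangle_{T}-\langle M\rangle_{r}\mid\mathcal{F}_{r}]\leq\Vert M\Vert_{\mathrm{BMO}}^{2}$ (deterministic times being admissible in \eqref{def-BMO}). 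Controlling the square root of this expression by $\Vert X\Vert_{\mathcal{S}^{1}}\Vert M\Vert_{\mathrm{BMO}}$ with the sharp constant $1$ is the delicate part, and is precisely the endpoint improvement the statement records; this is the step I expect to absorb most of the technical effort.
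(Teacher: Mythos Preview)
The paper does not prove this proposition at all: it is quoted as Lemma~1.4 of Delbaen--Tang \cite{Delbaen-Tang2010} and invoked as a black box in the estimate of the term \eqref{est-fzz-q-square-X-square} inside Lemma~\ref{lem-costfunc-dominate}. There is therefore no in-paper argument to compare your proposal against; any proof you supply would be supplementary to the paper rather than a reconstruction of something it contains.

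On the proposal itself, two remarks. For $p>1$ the Fefferman-plus-duality scheme is the natural route and yields the estimate up to a constant, but the point you flag---``the duality with its correct constant''---is a genuine issue and not a formality. The bracket pairing $(N,L)\mapsto\mathbb{E}[\langle N,L\rangle_T]$ between $\mathcal{H}^p$ and $\mathcal{H}^q$ is a duality, but the obvious adapted test martingale $L_t=\int_0^t\langle X\cdot M\rangle_s^{(p-2)/2}\,d(X\cdot M)_s$ only produces the ratio $\tfrac{2\sqrt{p-1}}{p}\,\Vert X\cdot M\Vert_{\mathcal{H}^p}$, strictly below $\Vert X\cdot M\Vert_{\mathcal{H}^p}$ unless $p=2$; the non-adapted choice that would make the pairing sharp is not a legitimate stochastic integrand. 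So as written your argument delivers $C_p\sqrt{2}\,\Vert X\Vert_{\mathcal{S}^p}\Vert M\Vert_{\mathrm{BMO}}$ for some $C_p\geq 1$, and getting exactly $\sqrt{2}$ requires either a sharper test $L$ or a different organization. For $p=1$ your integration-by-parts identity, after conditioning, controls $\mathbb{E}\big[\int_0^T(X_s^\ast)^2\,d\langle M\rangle_s\big]$ by $\Vert M\Vert_{\mathrm{BMO}}^2\,\mathbb{E}[(X_T^\ast)^2]$, i.e.\ an $\mathcal{S}^2$-type bound on the \emph{first} moment of the integral, whereas the target is a bound on $\mathbb{E}\big[(\int_0^T X_s^2\,d\langle M\rangle_s)^{1/2}\big]$ by $\Vert X\Vert_{\mathcal{S}^1}\Vert M\Vert_{\mathrm{BMO}}$; passing from one to the other with constant $1$ is not a Jensen step, and you have, as you acknowledge, left this gap open. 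Since the paper only cites the result, the clean resolution for both constants is to consult \cite{Delbaen-Tang2010} directly.
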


\subsection{Problem Formulation}

Consider the following decoupled FBSCS:
\begin{equation}
\left\{
\begin{array}
[c]{rl}%
dX_{t}^{u}= & b(t,X_{t}^{u},u_{t})dt+\sigma(t,X_{t}^{u},u_{t})dW_{t},\\
dY_{t}^{u}= & -f(t,X_{t}^{u},Y_{t}^{u},Z_{t}^{u},u_{t})dt+\left(  Z_{t}%
^{u}\right)  ^{\intercal}dW_{t},\\
X_{0}^{u}= & x_{0},\ Y_{T}^{u}=\Phi(X_{T}^{u}),
\end{array}
\right.  \label{state-eq}%
\end{equation}
with the cost functional%
\begin{equation}
J(u(\cdot)):=Y_{0}^{u} \label{cost-func}%
\end{equation}
for a given $x_{0} \in\mathbb{R}^{n}$ and measurable functions $b:[0,T]\times
\mathbb{R}^{n}\times U\longmapsto\mathbb{R}^{n}$, $\sigma:[0,T]\times
\mathbb{R}^{n}\times U\longmapsto\mathbb{R}^{n\times d}$, $f:[0,T]\times
\mathbb{R}^{n}\times\mathbb{R}\times\mathbb{R}^{d}\times U\longmapsto
\mathbb{R}$ and $\Phi:\mathbb{R}^{n}\longmapsto\mathbb{R}$, where the control
domain $U$ is a nonempty subset of $\mathbb{R}^{k}$,
and the $\mathbb{F}$-adapted process $u(\cdot)$ is called an admissible control which takes
values in $U$ satisfying%
\begin{equation}
\sup_{t\in\lbrack0,T]}\mathbb{E}\left[  \left\vert u_{t}\right\vert
^{8}\right]  <\infty. \label{control-integrable}%
\end{equation}
Denote by $\mathcal{U}[0,T]$ the set of all admissible controls, and assume $\inf_{u(\cdot)\in\mathcal{U}[0,T]}J\left(  u(\cdot)\right) > -\infty$.
We want to find an optimal control $\bar{u}(\cdot) \in \mathcal{U}[0,T]$ reaching the minimum of (\ref{cost-func}) or, if the minimum cannot be reached,
an $\epsilon$-optimal control $u^{\epsilon}(\cdot)$ such that $J(u^{\epsilon}(\cdot)) \leq \inf_{u(\cdot)\in\mathcal{U}[0,T]}J\left(  u(\cdot)\right) + \epsilon$
for some given $\epsilon>0$.

For deterministic control systems, it has been shown that the basic MSA may
diverge when a bad initial value of control is chosen (see \cite{AAL2}) or the
feasibility errors blow up (see \cite{WeinanE-2018}). It can be observed that
Kerimkulov et al. \cite{BDL-MSA-2020} proposed directly a modified MSA for
classical stochastic control systems to ensure the convergence. To go a
further step, we are aimed at establishing a modified MSA for stochastic
recursive control optimal control problems and obtaining the related
convergence result.

Before giving the modified MSA algorithm for (\ref{state-eq}), we first
introduce the SMP for it.
Set
\[%
\begin{array}
[c]{l}%
b(\cdot)=\left(  b^{1}(\cdot),b^{2}(\cdot),\ldots,b_{{}}^{n}(\cdot)\right)
^{\intercal}\in\mathbb{R}^{n},\\
\sigma(\cdot)=\left(  \sigma^{1}(\cdot),\sigma^{2}(\cdot),\ldots,\sigma
^{d}(\cdot)\right)  \in\mathbb{R}^{n\times d},\\
\sigma^{i}(\cdot)=\left(  \sigma^{1i}(\cdot),\sigma^{2i}(\cdot),\ldots
,\sigma_{{}}^{ni}(\cdot)\right)  ^{\intercal}\in\mathbb{R}^{n},\text{  }i=1,2,\ldots,d
\end{array}
\]
and impose the following assumptions on the coefficients of (\ref{state-eq}):
\begin{assumption}
\label{assum-1} Let $L_{i}, i=1,2,3$ be given positive constants.

(i) $b$ and $\sigma$ are twice continuously differentiable with respect to
$x$. $b$, $\sigma$, $b_{x}$, $\sigma_{x}$, $b_{xx}$, $\sigma_{xx}$ are
continuous in $\left(  x,u\right)  $. $b_{x}$, $\sigma_{x}$, $b_{xx}$,
$\sigma_{xx}$ are bounded. $b$ and $\sigma$ are bounded by $L_{1}\left(
1+\left\vert x\right\vert +\left\vert u\right\vert \right)  $.

(ii) $\Phi$ is twice continuously differentiable with respect to $x$. $\Phi_{x}$, $\Phi_{xx}$ are
bounded, and $\Phi$ is bounded by $L_{2}\left(  1+\left\vert x\right\vert
\right)  $.

(iii) $f$ is twice continuously differentiable with respect to $(x,y,z)$. $f$
together with its gradient $Df$, Hessian matrix $D^{2}f$ with respect to $x$,
$y$, $z$ are continuous in $(x,y,z,u)$. $Df$, $D^{2}f$ are bounded, and $f$ is
bounded by $L_{3}\left(  1+\left\vert x\right\vert +\left\vert y\right\vert
+\left\vert z\right\vert +\left\vert u\right\vert \right)  $.
\end{assumption}

Let us fix a $u(\cdot)\in\mathcal{U}[0,T]$ arbitrarily. Under Assumption
\ref{assum-1}, thanks to \cite{Protter90} (Chapter V, Theorem 6) and Theorem
5.1 in \cite{Karoui97}, (\ref{state-eq}) admits a unique solution $(X_{{}}%
^{u},Y_{{}}^{u},Z_{{}}^{u})\in\mathcal{S}_{\mathcal{F}}^{2}([0,T];\mathbb{R}%
^{n})\times\mathcal{S}_{\mathcal{F}}^{2}([0,T];\mathbb{R})\times$
$\mathcal{M}_{{}}^{2}(\mathbb{R}^{d})$. We call $(X_{{}}^{u},Y_{{}%
}^{u},Z_{{}}^{u})$ the state trajectory corresponding to $u(\cdot)$.
Particularly, let $\bar{u}(\cdot)$ be an optimal control, $\left(  \bar
{X},\bar{Y},\bar{Z}\right)  $ be the corresponding state trajectory of
(\ref{state-eq}) and $\left(  \bar{p},\bar{q} \right)  $, $\left(  \bar
{P},\bar{Q} \right)  $ be the corresponding unique solution to the first-order
adjoint equation (\ref{1st-adj-eq}), the second-order adjoint equation
(\ref{2nd-adj-eq}) below respectively. The (stochastic) Hamiltonian
$H:[0,T]\times\Omega\times\mathbb{R}_{{}}^{n}\times\mathbb{R}\times\mathbb{R}^{d}\times\mathbb{R}_{{}}^{n}%
\times\mathbb{R}_{{}}^{n\times d}\times\mathbb{R}_{{}}^{n\times n}\times
U\longmapsto\mathbb{R}$ is defined as follows:
\begin{align*}
& H(t,x,y,z,p,q,P,u)\\
& =\dfrac{1}{2}\sum\limits_{i=1}^{d}\left(  \sigma^{i}(t,x,u)-\sigma
^{i}(t,\bar{X}_{t},\bar{u}_{t})\right)  ^{\intercal}P\left(  \sigma
^{i}(t,x,u)-\sigma^{i}(t,\bar{X}_{t},\bar{u}_{t})\right)  \\
& +p^{\intercal}b(t,x,u)+\sum\limits_{i=1}^{d}\left(  q^{i}\right)
^{\intercal}\sigma_{{}}^{i}(t,x,u)+f(t,x,y,z+\Delta(t,x,u),u),
\end{align*}
where $q^{i}$ is the $i$th column of $q$ for $i=1,2,\ldots,d$, and
\[
\Delta(t,x,u) = \left(  \left(  \sigma_{{}}^{1}(t,x,u)-\sigma_{{}}^{1}%
(t,\bar{X}_{t},\bar{u}_{t})\right)  ^{\intercal}p,\ldots,\left(  \sigma_{{}%
}^{d}(t,x,u)-\sigma_{{}}^{d}(t,\bar{X}_{t},\bar{u}_{t})\right)  ^{\intercal
}p\right)  ^{\intercal}.
\]
Then the following stochastic maximum principle (\cite{Hu17}, Theorem 3; \cite{HuJiXue18}, Theorem 3.17) holds.

\begin{theorem}
\label{thm-maximum principle} Let Assumption \ref{assum-1} hold. Then, for all
$u\in U$,%
\begin{equation}
\label{Hu-SMP}H(t,\bar{X}_{t},\bar{Y}_{t},\bar{Z}_{t},\bar{p}_{t},\bar{q}%
_{t},\bar{P}_{t},u)\geq H(t,\bar{X}_{t},\bar{Y}_{t},\bar{Z}_{t}%
,\bar{p}_{t},\bar{q}_{t},\bar{P}_{t},\bar{u}_{t}),\ dt \otimes d\mathbb{P}\text{-}a.e..
\end{equation}

\end{theorem}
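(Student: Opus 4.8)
The plan is to prove the variational inequality (\ref{Hu-SMP}) by the classical spike (needle) variation technique adapted to the recursive (backward) cost, following the route of Peng and its forward--backward extension. Fix an arbitrary $u\in U$ and a Lebesgue point $t_{0}\in[0,T)$ of $\bar u(\cdot)$, and for small $\epsilon>0$ define the perturbed control
\[
u^{\epsilon}_{t}=u\,\mathbf 1_{E_{\epsilon}}(t)+\bar u_{t}\,\mathbf 1_{[0,T]\setminus E_{\epsilon}}(t),\qquad E_{\epsilon}=[t_{0},t_{0}+\epsilon].
\]
Since $\bar u(\cdot)$ is optimal and $J(u)=Y_{0}^{u}$, optimality gives the sign constraint $Y_{0}^{u^{\epsilon}}-\bar Y_{0}\geq 0$. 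The whole proof then reduces to expanding $Y_{0}^{u^{\epsilon}}-\bar Y_{0}$ to first order in $\epsilon$ and reading off the Hamiltonian.

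First I would carry out the variational expansion of the state trajectory. Because $\sigma$ carries the control, I introduce the first- and second-order forward variational processes $x^{1},x^{2}$ solving the linear SDEs obtained by differentiating $b,\sigma$ along $\bar X$, with the spike terms $\delta\sigma=\sigma(t,\bar X_{t},u)-\sigma(t,\bar X_{t},\bar u_{t})$ and $\delta b$ entering on $E_{\epsilon}$. Assumption \ref{assum-1} yields the standard estimates $\mathbb E\sup_{t}|x^{1}_{t}|^{2\ell}=O(\epsilon^{\ell})$, $\mathbb E\sup_{t}|x^{2}_{t}|^{2}=O(\epsilon^{2})$ and $X^{u^{\epsilon}}-\bar X-x^{1}-x^{2}=o(\epsilon)$ in $\mathcal{S}_{\mathcal{F}}^{2}$. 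The crucial structural point is that the $O(\sqrt\epsilon)$ size of $x^{1}$ (coming from the diffusion spike) makes the quadratic term $\tfrac12\sigma_{xx}(x^{1})^{2}$ and the bracket of the martingale part of $x^{1}$ enter at the same order $\epsilon$ as the first-order drift terms; this is precisely what forces the second-order adjoint process $P$ into the Hamiltonian. Since the forward equation is decoupled, this part is standalone, and I would then expand the backward pair $(Y,Z)$ analogously into $(y^{1},z^{1})$ and $(y^{2},z^{2})$, driven by the derivatives of $f$ and $\Phi$ together with the spike term $\delta f$, so that $Y_{0}^{u^{\epsilon}}-\bar Y_{0}=y^{1}_{0}+y^{2}_{0}+o(\epsilon)$.

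Next comes the duality step, which I expect to be the main obstacle. Using the linear-BSDE representation of $(y^{1},z^{1})$ to turn $y^{1}_{0}$ into an expectation against the forward variation, and applying It\^o's formula to the pairings $\langle \bar p,\,x^{1}+x^{2}\rangle$ and $\tfrac12\langle\bar P x^{1},x^{1}\rangle$ against the adjoint equations (\ref{1st-adj-eq})--(\ref{2nd-adj-eq}), I would collect all $O(\epsilon)$ contributions into a single integral over $E_{\epsilon}$. The delicate feature, special to the recursive cost, is that $f$ depends on $z$ while $\sigma$ depends on the control: dualizing the $f_{z}$-term produces a cross-term between the diffusion variation $\delta\sigma$ and the first-order adjoint $\bar p$, and this is exactly the content of the shifted argument $z+\Delta(t,x,u)$ appearing inside $f$ in the definition of $H$. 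Verifying that these cross-terms assemble precisely into $\Delta$, while $\bar P$ absorbs the $|x^{1}|^{2}$ contributions, is the computational heart of the argument; the boundedness of $\bar p,\bar q,\bar P$ (and the BMO property exploited elsewhere in the paper) guarantees the remainder is genuinely $o(\epsilon)$. The outcome is
\begin{align*}
0\le Y_{0}^{u^{\epsilon}}-\bar Y_{0}
&=\mathbb E\!\int_{E_{\epsilon}}\!\big[H(t,\bar X_{t},\bar Y_{t},\bar Z_{t},\bar p_{t},\bar q_{t},\bar P_{t},u)\\
&\quad-H(t,\bar X_{t},\bar Y_{t},\bar Z_{t},\bar p_{t},\bar q_{t},\bar P_{t},\bar u_{t})\big]\,dt+o(\epsilon).
\end{align*}

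Finally I would divide by $\epsilon$ and let $\epsilon\downarrow 0$. By Lebesgue's differentiation theorem at $t_{0}$, this yields, for the fixed $u\in U$, the inequality (\ref{Hu-SMP}) $dt\otimes d\mathbb P$-a.e. To upgrade ``for each fixed $u$'' to ``for all $u\in U$ simultaneously, a.e.'', I would run the argument over a countable dense subset of $U$ and use the continuity of $H$ in $u$ (Assumption \ref{assum-1}) together with separability of $U$ to remove the exceptional null set uniformly, giving the pointwise minimum condition asserted in the statement.
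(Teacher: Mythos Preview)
The paper does not supply its own proof of this theorem: it is quoted from \cite{Hu17} (Theorem 3) and \cite{HuJiXue18} (Theorem 3.17), so there is nothing to compare your argument against within the paper itself. Your outline---spike variation, two-scale forward expansion $x^{1},x^{2}$, backward expansion $(y^{i},z^{i})$, duality via $\langle \bar p,x^{1}+x^{2}\rangle$ and $\tfrac12\langle \bar P x^{1},x^{1}\rangle$, and the identification of the $f_{z}$--$\delta\sigma$ cross-term with the shift $\Delta$---is exactly the route taken in those references, and the structural point you flag about $\Delta$ is indeed Hu's key observation.

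One small technical caveat: speaking of ``a Lebesgue point $t_{0}$ of $\bar u(\cdot)$'' is not quite right when $\bar u$ is a random process. In the cited proofs one instead lets $E_{\epsilon}$ be an arbitrary interval (or even an arbitrary $\mathbb F$-predictable set of measure $\epsilon$), obtains the inequality for the \emph{integrated} Hamiltonian difference $\mathbb E\int_{E_{\epsilon}}[\cdots]\,dt\ge o(\epsilon)$, and then applies Lebesgue differentiation to this deterministic set function to extract the pointwise $dt\otimes d\mathbb P$-a.e.\ conclusion. With that adjustment your plan is the standard and correct one.
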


Secondly, it follows from the pioneering works mentioned before that a key
step to control the divergent behavior rigorously of the modified MSA is to
obtain the error estimate by estimating the difference between two cost
functionals $J(u(\cdot))$ and $J(v(\cdot))$ corresponding to different
admissible controls $u(\cdot)$ and $v(\cdot)$. In order to do this, we need to introduce
the following notations and the augmented Hamiltonian.

Define the function $G:[0,T]\times\mathbb{R}_{{}}^{n}\times\mathbb{R}%
\times\mathbb{R}_{{}}^{d}\times\mathbb{R}_{{}}^{n}\times\mathbb{R}_{{}%
}^{n\times d}\times U\times U\longmapsto\mathbb{R}$ by%
\[
G(t,x,y,z,p,q,v,u)=p_{{}}^{\intercal}b(t,x,v)+\sum\limits_{i=1}^{d}\left(
q^{i}\right)  ^{\intercal}\sigma^{i}(t,x,v)+f(t,x,y,z+\tilde{\Delta
}(t,x,p,v,u),v),
\]
where $q=\left(  q^{1},\ldots,q^{d}\right)  $ and%
\[
\tilde{\Delta}(t,x,p,v,u):=\left(  \left(  \sigma_{{}}^{1}(t,x,v)-\sigma_{{}%
}^{1}(t,x,u)\right)  ^{\intercal}p,\ldots,\left(  \sigma_{{}}^{d}%
(t,x,v)-\sigma_{{}}^{d}(t,x,u)\right)  ^{\intercal}p\right)  ^{\intercal}.
\]
Let $u(\cdot), v(\cdot) \in\mathcal{U}[0,T]$. For $\psi=b$,
$\sigma$, and $w=x$, $y$, $z$, we simply set%
\begin{equation}%
\begin{array}
[c]{ll}%
\Theta_{t}^{u}=\left(  X_{t}^{u},Y_{t}^{u},Z_{t}^{u}\right)  , & \Theta
_{t}^{u,v}=\left(  X_{t}^{u},Y_{t}^{u},Z_{t}^{u}+\tilde{\Delta}(t,X_{t}%
^{u},p_{t}^{u},v_{t},u_{t})\right)  ,\\
\psi^{u}(t)=\psi(t,X_{t}^{u},u_{t}), & \psi_{x}^{u}(t)=\psi_{x}(t,X_{t}%
^{u},u_{t}),\\
\psi_{xx}^{u}(t)=\psi_{xx}(t,X_{t}^{u},u_{t}), & \\
f^{u}(t)=f(t,\Theta_{t}^{u},u_{t}), & f^{u,v}(t)=f(t,\Theta_{t}^{u,v}%
,v_{t}),\\
f_{w}^{u}(t)=f_{w}(t,\Theta_{t}^{u},u_{t}), & f_{w}^{u,v}(t)=f_{w}%
(t,\Theta_{t}^{u,v},v_{t}),\\
f_{ww}^{u}(t)=f_{ww}(t,\Theta_{t}^{u},u_{t}), & f_{ww}^{u,v}(t)=f_{ww}%
(t,\Theta_{t}^{u,v},v_{t})
\end{array}
\label{def-notation}%
\end{equation}
and $D^{2}f^{u}(t)=D^{2}f(t,\Theta_{t}^{u},u_{t})$ for all $t\in\lbrack0,T]$.
In particular, for $i=1,\ldots,d$, we denote $\sigma_{x}^{u,i}(t)=\sigma_{x}^{i}(t,X_{t}^{u},u_{t})$
and $\sigma_{xx}^{u,i}(t)=\sigma_{xx}^{i}(t,X_{t}^{u},u_{t})$.

In our context, for $t\in[0,T]$, the first-order (resp. second-order) adjoint equation in
\cite{Hu17} can be rewritten as (\ref{1st-adj-eq}) (resp. (\ref{2nd-adj-eq})) below.
\begin{equation}%
\begin{array}
[c]{rl}%
p_{t}^{u}= & \Phi_{x}(X_{T}^{u})+\int_{t}^{T}\left\{  G_{x}(s,\Theta_{s}%
^{u},p_{s}^{u},q_{s}^{u},u_{s},u_{s})+G_{y}(s,\Theta_{s}^{u},p_{s}^{u}%
,q_{s}^{u},u_{s},u_{s})p_{s}^{u}\right.  \\
& +\left.  \Upsilon(s,X_{s}^{u},p_{s}^{u},q_{s}^{u},u_{s})G_{z}(s,\Theta
_{s}^{u},p_{s}^{u},q_{s}^{u},u_{s},u_{s})\right\}  ds-\sum\limits_{i=1}%
^{d}\int_{t}^{T}\left(  q_{s}^{u}\right)  ^{i}dW_{s}^{i},
\end{array}
\label{1st-adj-eq}%
\end{equation}

\begin{equation}%
\begin{array}
[c]{cl}%
P_{t}^{u}= & \Phi_{xx}(X_{T}^{u})+\int_{t}^{T}\left\{  f_{y}^{u}(s)P_{s}%
^{u}+\left(  b_{x}^{u}(s)\right)  ^{\intercal}P_{s}^{u}+\left(  P_{s}%
^{u}\right)  ^{\intercal}b_{x}^{u}(s)\right.  \\
& +\sum\limits_{i=1}^{d}f_{z_{i}}^{u}(s)\left[  \left(  \sigma_{x}%
^{u,i}(s)\right)  ^{\intercal}P_{s}^{u}+\left(  P_{s}^{u}\right)  ^{\intercal
}\sigma_{x}^{u,i}(s)\right]  \\
& +\sum\limits_{i=1}^{d}\left(  \sigma_{x}^{u,i}(s)\right)  ^{\intercal}%
P_{s}^{u}\sigma_{x}^{u,i}(s)+\sum\limits_{i=1}^{d}f_{z_{i}}^{u}(s)\left(
Q_{s}^{u}\right)  ^{i}\\
& +\left.  \sum\limits_{i=1}^{d}\left[  \left(  \sigma_{x}^{u,i}(s)\right)
^{\intercal}\left(  Q_{s}^{u}\right)  ^{i}+\left(  \left(  Q_{s}^{u}\right)
^{i}\right)  ^{\intercal}\sigma_{x}^{u,i}(s)\right]  +\Psi_{s}^{u}\right\}
ds\\
& -\sum\limits_{i=1}^{d}\int_{t}^{T}\left(  Q_{s}^{u}\right)  ^{i}dW_{s}^{i},
\end{array}
\label{2nd-adj-eq}%
\end{equation}
where
\[
\Upsilon(t,X_{t}^{u},p_{t}^{u},q_{t}^{u},u_{t})=\left(  \left(  \sigma
_{x}^{1}(t,X_{t}^{u},u_{t})\right)  ^{\intercal}p_{t}^{u}+\left(  q_{t}%
^{u}\right)  ^{1},\ldots,\left(  \sigma_{x}^{d}(t,X_{t}^{u},u_{t})\right)
^{\intercal}p_{t}^{u}+\left(  q_{t}^{u}\right)  ^{d}\right),
\]

\begin{equation}%
\begin{array}
[c]{rl}%
\Psi_{t}^{u}= & \sum\limits_{j=1}^{n}\left(  b_{xx}^{u}(t)\right)
^{j}\left( p_{t}^{u}\right)  ^{j}+\sum\limits_{i=1}^{d}\sum\limits_{j=1}%
^{n}\left(  \sigma_{xx}^{u,i}(t)\right)  ^{j}\left(  f_{z_{i}}^{u}(t)\left(
p_{t}^{u}\right)  ^{j}+\left(  q_{t}^{u}\right)  ^{ji}\right) \\
& +\left(  I_{n},p_{t}^{u},\Upsilon(t,X_{t}^{u},p_{t}^{u},q_{t}^{u}%
,u_{t})\right)  D^{2}f^{u}(t)\left(  I_{n},p_{t}^{u},\Upsilon(t,X_{t}%
^{u},p_{t}^{u},q_{t}^{u},u_{t})\right)  ^{\intercal}.
\end{array}
\label{2nd-adj-eq-multi-data}%
\end{equation}
Here, for $i=1,\ldots,d$ and $j=1,\ldots,n$, $\left( p^{u}\right)  ^{j}$, $\left(  q^{u}\right)  ^{ji}$ are the $j$th components of $p^{u}$, $\left(  q^{u}\right)  ^{i}$
respectively;
$\left(  b_{xx}^{u}(t)\right)^{j}$, $\left(  \sigma_{xx}^{u,i}(t)\right)  ^{j}$ are the Hessian matrices of the $j$th components of
$b^{u}(t)$, $\sigma^{u,i}(t)$ respectively.

Define the (deterministic) Hamiltonian $\mathcal{H}:[0,T]\times\mathbb{R}_{{}}%
^{n}\times\mathbb{R}\times\mathbb{R}^{d}\times\mathbb{R}_{{}}^{n}\times\mathbb{R}_{{}}^{n\times d}\times
\mathbb{R}_{{}}^{n\times n}\times U\times U\longmapsto\mathbb{R}$ by%
\begin{equation}%
\begin{array}
[c]{ll}
& \mathcal{H}(t,x,y,z,p,q,P,v,u)\\
= & G(t,x,y,z,p,q,v,u)\\
& +\dfrac{1}{2}\sum\limits_{i=1}^{d}\left(  \sigma^{i}(t,x,v)-\sigma
^{i}(t,x,u)\right)  ^{\intercal}P\left(  \sigma^{i}(t,x,v)-\sigma
^{i}(t,x,u)\right)  .
\end{array}
\label{def-Hamiltonian}%
\end{equation}
Then, for all $u \in U$, the SMP in Theorem
\ref{thm-maximum principle} can be rewritten as follows:
\[
\mathcal{H}(t,\bar{X}_{t},\bar{Y}_{t},\bar{Z}_{t},\bar{p}_{t},\bar{q}_{t}%
,\bar{P}_{t},u,\bar{u}_{t})\geq\mathcal{H}(t,\bar{X}_{t},\bar{Y}%
_{t},\bar{Z}_{t},\bar{p}_{t},\bar{q}_{t},\bar{P}_{t},\bar{u}_{t},\bar{u}_{t}%
),\text{ } dt \otimes d\mathbb{P}\text{-} a.e..
\]
Now we introduce the augmented Hamiltonian $\mathcal{\tilde{H}}:[0,T]\times\mathbb{R}_{{}}^{n}
\times\mathbb{R}\times\mathbb{R}^{d}\times\mathbb{R}_{{}}^{n}\times\mathbb{R}_{{}}^{n\times
d}\times\mathbb{R}_{{}}^{n\times n}\times U\times U\longmapsto\mathbb{R}$ for
some $\rho\geq0$ by%
\begin{equation}%
\begin{array}
[c]{cl}
& \mathcal{\tilde{H}(}t,x,y,z,p,q,P,v,u\mathcal{)}\\
= & \mathcal{H}(t,x,y,z,p,q,P,v,u)+\dfrac{\rho}{2}\left\{  \sum\limits_{\psi
\in\{b,\sigma,f\}}\left\vert \psi(t,x,y,z,v)-\psi(t,x,y,z,u)\right\vert
^{2}\right.  \\
& +\left.  \sum\limits_{w\in\{x,y,z\}}\left\vert G_{w}(t,x,y,z,p,q,v,u)-G_{w}%
(t,x,y,z,p,q,u,u)\right\vert ^{2}\right\}  .
\end{array}
\label{def-general-Hamiltonian}%
\end{equation}
Note that when $\rho=0$ we get exactly the Hamiltonian
(\ref{def-Hamiltonian}). Moreover, the SMP also holds for
$\mathcal{\tilde{H}}$, which is a basis of constructing the iterations in the MSA algorithm.

\begin{lemma}
[\textbf{Extended SMP}]Let $\bar{u}(\cdot)$ be an optimal control, $\left(
\bar{X},\bar{Y},\bar{Z}\right)  $ be the corresponding state trajectory of
(\ref{state-eq}) and $\left(  \bar{p},\bar{q} \right)  $, $\left(  \bar
{P},\bar{Q} \right)  $ be the corresponding unique solutions to
(\ref{1st-adj-eq}), (\ref{2nd-adj-eq}) respectively. Then we have, $ dt \otimes d\mathbb{P}\text{-}a.e.$,
\begin{equation}
\mathcal{\tilde{H}}(t,\bar{X}_{t},\bar{Y}_{t},\bar{Z}_{t},\bar{p}_{t},\bar
{q}_{t},\bar{P}_{t},\bar{u}_{t},\bar{u}_{t})=\min_{u\in U}\mathcal{\tilde{H}%
}(t,\bar{X}_{t},\bar{Y}_{t},\bar{Z}_{t},\bar{p}_{t},\bar{q}_{t},\bar{P}%
_{t},u,\bar{u}_{t}).\label{aug-SMP}%
\end{equation}
\end{lemma}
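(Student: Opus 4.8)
The plan is to deduce (\ref{aug-SMP}) directly from the stochastic maximum principle of Theorem \ref{thm-maximum principle}, exploiting two elementary facts about the penalty appended in the augmented Hamiltonian (\ref{def-general-Hamiltonian}): it is nonnegative, and it vanishes identically on the diagonal $v=u$. First I would record the identification between the deterministic Hamiltonian $\mathcal{H}$ and the stochastic Hamiltonian $H$ that makes the rewriting legitimate. Evaluating at the optimal trajectory with the second control slot fixed at $\bar u_t$, we have $\sigma^{i}(t,\bar X_t,\bar u_t)$ in place of $\sigma^{i}(t,x,u)$, so $\tilde{\Delta}(t,\bar X_t,\bar p_t,v,\bar u_t)=\Delta(t,\bar X_t,v)$ and the quadratic $P$-term of $\mathcal{H}$ coincides with that of $H$; hence $\mathcal{H}(t,\bar X_t,\bar Y_t,\bar Z_t,\bar p_t,\bar q_t,\bar P_t,v,\bar u_t)=H(t,\bar X_t,\bar Y_t,\bar Z_t,\bar p_t,\bar q_t,\bar P_t,v)$ for every $v\in U$. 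This is exactly the rewriting already recorded above, so Theorem \ref{thm-maximum principle} yields, for all $u\in U$ and $dt\otimes d\mathbb{P}$-a.e.,
\[
\mathcal{H}(t,\bar X_t,\bar Y_t,\bar Z_t,\bar p_t,\bar q_t,\bar P_t,u,\bar u_t)\ge \mathcal{H}(t,\bar X_t,\bar Y_t,\bar Z_t,\bar p_t,\bar q_t,\bar P_t,\bar u_t,\bar u_t).
\]

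Next I would analyze the penalty in (\ref{def-general-Hamiltonian}). Both sums there are sums of squared Euclidean norms of differences of the form (a quantity evaluated at $v$) minus (the same quantity evaluated at $u$); they are therefore nonnegative for every pair $(v,u)$, and each summand vanishes when $v=u$. Evaluating the augmented Hamiltonian on the diagonal $v=u=\bar u_t$ thus annihilates the penalty, giving
\[
\tilde{\mathcal{H}}(t,\bar X_t,\bar Y_t,\bar Z_t,\bar p_t,\bar q_t,\bar P_t,\bar u_t,\bar u_t)=\mathcal{H}(t,\bar X_t,\bar Y_t,\bar Z_t,\bar p_t,\bar q_t,\bar P_t,\bar u_t,\bar u_t),
\]
whereas for an arbitrary $u\in U$ in the first slot the nonnegativity of the penalty gives $\tilde{\mathcal{H}}(\ldots,u,\bar u_t)\ge \mathcal{H}(\ldots,u,\bar u_t)$. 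Chaining these with the inequality of the previous paragraph produces, $dt\otimes d\mathbb{P}$-a.e.\ and for every $u\in U$,
\[
\tilde{\mathcal{H}}(\ldots,u,\bar u_t)\ge \mathcal{H}(\ldots,u,\bar u_t)\ge \mathcal{H}(\ldots,\bar u_t,\bar u_t)=\tilde{\mathcal{H}}(\ldots,\bar u_t,\bar u_t),
\]
which is precisely the minimization statement (\ref{aug-SMP}).

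Since the entire argument is pointwise in $(t,\omega)$ and only combines the almost-everywhere inequality of Theorem \ref{thm-maximum principle} with the sign of the penalty, there is no genuine analytic obstacle here; the lemma is a structural corollary rather than a new estimate. The one point that requires care is the bookkeeping establishing $\mathcal{H}(t,\bar X_t,\ldots,v,\bar u_t)=H(t,\bar X_t,\ldots,v)$, i.e.\ checking that setting $\rho=0$ and the second control slot to $\bar u_t$ recovers exactly the Hamiltonian of the maximum principle, including the coincidence $\tilde{\Delta}(t,\bar X_t,\bar p_t,v,\bar u_t)=\Delta(t,\bar X_t,v)$. Once that identification is verified, (\ref{aug-SMP}) follows immediately.
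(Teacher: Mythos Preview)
Your argument is correct and matches the paper's own reasoning: the paper simply states that the extended SMP is a direct application of Theorem \ref{thm-maximum principle} together with the definition (\ref{def-general-Hamiltonian}) and omits the details. Your explicit verification that the penalty term is nonnegative and vanishes on the diagonal, and that $\mathcal{H}(t,\bar X_t,\ldots,v,\bar u_t)=H(t,\bar X_t,\ldots,v)$, is exactly the content behind that one-line remark.
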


The proof of the extended SMP is a direct application of Theorem
\ref{thm-maximum principle} and (\ref{def-general-Hamiltonian}) so we omit it.
It should be emphasized that not all the multiples such as $\left(  \bar{X},\bar{Y},\bar{Z},\bar{p},\bar{q},\bar{P}, \bar{u}(\cdot)\right)$ satisfying (\ref{aug-SMP}) are globally optimal for (\ref{state-eq})-(\ref{cost-func}) since (\ref{Hu-SMP}) is only the
necessary condition of the optimality and (\ref{aug-SMP}) is weaker than (\ref{Hu-SMP}).
Nonetheless, we will lately obtain an ``approximation'' form of (\ref{aug-SMP}) in Theorem \ref{thm-convergence}, which
is helpful for us to derive the sufficient condition of the near-optimality for a class of linear forward-backward stochastic recursive control problems
and it will be proved rigorously in Theorem \ref{thm-suffi-cond}.

As the end of this section, we introduce the modified MSA in the following
algorithm.
\begin{breakablealgorithm}
\caption{The Modified Method of Successive Approximations for Stochastic Recursive Optimal Control Problems}
\label{algorithm-MSA}
\begin{algorithmic}[1]
\State \textbf{Variable Initialisation:} Put $m=1$, and take any $u^{0}(\cdot) \in \mathcal{U}[0,T]$ to be an initial approximation.
\State Solve the FBSDE (\ref{state-eq}) corresponding to $u^{0}(\cdot)$ and then obtain the state trajectory $\left ( X^{1},Y^{1},Z^{1} \right )$.
\State Calculate $J(u^{0}(\cdot))$.
\State Solve the BSDE (\ref{1st-adj-eq}) and (\ref{2nd-adj-eq}) with the control $u^{0}(\cdot)$ and state trajectory $\left ( X^{1},Y^{1},Z^{1} \right )$,
and then obtain the first and second-order adjoint processes $\left ( p^{1},q^{1} \right )$ and $\left ( P^{1},Q^{1} \right )$.
\State Set
\[
u_{t}^{1} \in \arg\min_{u\in U}\mathcal{\tilde{H}}(t,X_{t}^{1},Y_{t}^{1},Z_{t}^{1},p_{t}^{1},q_{t}^{1},P_{t}^{1},u,u_{t}^{0}), \quad t\in [0,T].
\]
\State Repeat steps 2-3 by replacing $u^{0}(\cdot)$ with $u^{1}(\cdot)$, and then obtain $\left ( X^{2},Y^{2},Z^{2} \right )$, $J(u^{1}(\cdot))$.
\While {$J(u^{m-1}(\cdot)) - J(u^{m}(\cdot))$ is larger than some given permissible error $\epsilon>0$}
\State Increase the value of $m$ by $1$.
\State Repeat the step 4 by replacing $u^{0}(\cdot)$, $\left ( X^{1},Y^{1},Z^{1} \right )$ with $u^{m-1}(\cdot)$, $\left ( X^{m},Y^{m},Z^{m} \right )$ respectively,
and then obtain $\left ( p^{m},q^{m} \right )$ and $\left ( P^{m},Q^{m} \right )$.
\State Update the control
\begin{equation}
u_{t}^{m} \in \arg\min_{u\in U}\mathcal{\tilde{H}}(t,X_{t}^{m},Y_{t}^{m},Z_{t}^{m},p_{t}^{m},q_{t}^{m},P_{t}^{m},u,u_{t}^{m-1}) \quad, t\in\lbrack0,T].
\label{def-u-n-step}%
\end{equation}
\State Repeat steps 2-3 by replacing $u^{0}(\cdot)$ with $u^{m}(\cdot)$, and then obtain $\left ( X^{m+1},Y^{m+1},Z^{m+1} \right )$, $J(u^{m}(\cdot))$.
\EndWhile
\State \Return $u_{}^{m-1}$.
\end{algorithmic}
\end{breakablealgorithm}

\section{Main Results}

In this section, the universal constant $C$ may depend only on $n$, $d$, $T$,
$\left\Vert b_{x}\right\Vert _{\infty}$, $\left\Vert \sigma_{x}\right\Vert
_{\infty}$, $\left\Vert \Phi_{x}\right\Vert _{\infty}$, $\left\Vert
Df\right\Vert _{\infty}$, $\left\Vert D^{2}f\right\Vert _{\infty}$ and\ will
change from line to line in our proof.

\subsection{Properties of Solutions to Adjoint Equations}

Before stating the main results of the paper, we first give some properties of
solutions to the adjoint equations (\ref{1st-adj-eq}) and (\ref{2nd-adj-eq}),
which are necessary to prove the convergence of the Algorithm
\ref{algorithm-MSA}.

Under Assumption \ref{assum-1}, the following lemma shows that the solution $\left( p^{u}, q^{u} \right)$ to the first-order
adjoint equation (\ref{1st-adj-eq}) is uniformly bounded in $L_{\mathcal{F}}^{\infty}([0,T];\mathbb{R}^{n}) \times \mathcal{K}(\mathbb{R}^{n\times d})$
across all admissible controls.

\begin{lemma}
\label{adj-1st-lem} Let Assumption \ref{assum-1} hold. Then, for any
$u(\cdot)\in\mathcal{U}[0,T]$, (\ref{1st-adj-eq}) admits a unique
solution $\left(  p^{u},q^{u}\right)  \in\mathcal{S}_{\mathcal{F}}%
^{2}([0,T];\mathbb{R}_{{}}^{n})\times\mathcal{M}_{{}}^{2}(\mathbb{R}^{n\times
d})$. Moreover, we have
\[
\sup_{u(\cdot)\in\mathcal{U}[0,T]}\left(  \left\Vert p_{{}}^{u}\right\Vert
_{\infty}+\left\Vert q_{{}}^{u}\right\Vert _{\mathcal{K}}\right)  \leq C,
\]
where $C$ is independent of $u(\cdot)$.
\end{lemma}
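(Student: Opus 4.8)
The plan is to recognise (\ref{1st-adj-eq}) as a linear BSDE with bounded data and to prove the two bounds by a single Itô/backward--Gronwall argument rather than by a Girsanov change of measure. First I would make the linear structure explicit. Evaluating $G_{x},G_{y},G_{z}$ at $v=u$ (so that $\tilde{\Delta}\equiv0$ and its $x$-derivative vanishes), the driver of (\ref{1st-adj-eq}) becomes affine in $(p^{u},q^{u})$, of the form $A_{s}p_{s}^{u}+\sum_{i=1}^{d}B_{s}^{i}(q_{s}^{u})^{i}+c_{s}$, where $A_{s}=(b_{x}^{u})^{\intercal}+f_{y}^{u}I_{n}+\sum_{i}f_{z_{i}}^{u}(\sigma_{x}^{u,i})^{\intercal}$, $B_{s}^{i}=(\sigma_{x}^{u,i})^{\intercal}+f_{z_{i}}^{u}I_{n}$ and $c_{s}=f_{x}^{u}$. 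By Assumption \ref{assum-1} all of $A$, $B^{i}$, $c$ are bounded by a constant depending only on the data listed at the start of Section 3, and the terminal value $\Phi_{x}(X_{T}^{u})$ is bounded by $\Vert\Phi_{x}\Vert_{\infty}$, all uniformly in $u(\cdot)$. Existence and uniqueness of $(p^{u},q^{u})\in\mathcal{S}_{\mathcal{F}}^{2}([0,T];\mathbb{R}^{n})\times\mathcal{M}^{2}(\mathbb{R}^{n\times d})$ is then the standard Pardoux--Peng result for Lipschitz BSDEs.

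Next I would apply Itô's formula to $|p_{t}^{u}|^{2}$, integrate over $[\tau,T]$ for a stopping time $\tau$, and take $\mathbb{E}[\,\cdot\mid\mathcal{F}_{\tau}]$. Since $p^{u}\in\mathcal{S}^{2}$ and $q^{u}\in\mathcal{M}^{2}$, the stochastic integral $\int(p_{s}^{u})^{\intercal}q_{s}^{u}\,dW_{s}$ is a true martingale (by Cauchy--Schwarz $\Vert p^{u}\Vert_{\mathcal{S}^{2}}\Vert q^{u}\Vert_{\mathcal{M}^{2}}<\infty$), so its conditional expectation drops out. Using $|A_{s}|,|B_{s}^{i}|,|c_{s}|\le C$ together with Young's inequality to split the cross term $2(p_{s}^{u})^{\intercal}\sum_{i}B_{s}^{i}(q_{s}^{u})^{i}$, half of the resulting $\int|q^{u}|^{2}$ is absorbed into the left-hand side, leaving the master inequality
\[
|p_{\tau}^{u}|^{2}+\tfrac{1}{2}\,\mathbb{E}\!\left[\int_{\tau}^{T}|q_{s}^{u}|^{2}\,ds\,\Big|\,\mathcal{F}_{\tau}\right]\le K+C\,\mathbb{E}\!\left[\int_{\tau}^{T}|p_{s}^{u}|^{2}\,ds\,\Big|\,\mathcal{F}_{\tau}\right],
\]
with $K=\Vert\Phi_{x}\Vert_{\infty}^{2}+CT$ and $C,K$ independent of $u(\cdot)$.

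To obtain the $L^{\infty}$ bound I would discard the non-negative $q$-term, set $\phi_{t}=|p_{t}^{u}|^{2}$, and read the inequality as $\phi_{t}\le K+C\,\mathbb{E}[\int_{t}^{T}\phi_{s}\,ds\mid\mathcal{F}_{t}]$ for every deterministic $t$; iterating this conditional inequality and using the tower property produces the convergent series $\sum_{j}C^{j}(T-t)^{j}/j!$, i.e.\ a backward (stochastic) Gronwall estimate giving $\phi_{t}\le Ke^{C(T-t)}$, hence $\Vert p^{u}\Vert_{\infty}\le C$ uniformly in $u(\cdot)$. Feeding this uniform bound back into the master inequality bounds its right-hand side by a constant, so $\mathbb{E}[\int_{\tau}^{T}|q_{s}^{u}|^{2}\,ds\mid\mathcal{F}_{\tau}]\le C$ for all $\tau$, which is exactly $\Vert q^{u}\Vert_{\mathcal{K}}\le C$ uniformly.

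The main obstacle is the $L^{\infty}$ bound on $p^{u}$. The term $\sum_{i}(\sigma_{x}^{u,i})^{\intercal}(q_{s}^{u})^{i}$ couples the components of $q^{u}$ through the matrices $(\sigma_{x}^{u,i})^{\intercal}$, so in the vector-valued case it cannot be removed by a scalar Girsanov shift of the Brownian motion (unlike the $f_{z_{i}}^{u}I_{n}$ part), and the naive representation $p_{t}^{u}=\mathbb{E}[\Xi^{\intercal}\Phi_{x}(X_{T}^{u})+\cdots\mid\mathcal{F}_{t}]$ only yields $L^{p}$ control because the fundamental matrix $\Xi$ is not bounded. The self-improving Itô estimate followed by the backward Gronwall inequality is what circumvents this: it produces the $L^{\infty}$ bound on $p^{u}$ and the $\mathcal{K}$-bound on $q^{u}$ in tandem, with all constants traced back to the uniform bounds of Assumption \ref{assum-1}.
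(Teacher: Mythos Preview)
Your argument is correct and complete: the linear structure you identify matches the paper's, the It\^{o}/Young step yields the master inequality, and the backward stochastic Gronwall iteration (with the tower property and the $\mathcal{S}^{2}$ a~priori bound to kill the remainder) gives $\Vert p^{u}\Vert_{\infty}\le C$, after which the $\mathcal{K}$-bound on $q^{u}$ follows by substitution.

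The paper takes a different route for the $L^{\infty}$ bound. It uses the explicit linear-BSDE representation
\[
p_{t}^{u}=\mathbb{E}\Big[(\Gamma_{T}^{u}(\Gamma_{t}^{u})^{-1})^{\intercal}\Phi_{x}(X_{T}^{u})+\int_{t}^{T}(\Gamma_{s}^{u}(\Gamma_{t}^{u})^{-1})^{\intercal}f_{x}^{u}(s)\,ds\;\Big|\;\mathcal{F}_{t}\Big],
\]
and observes that the \emph{flow} $(\Gamma_{s}^{t})^{u}:=\Gamma_{s}^{u}(\Gamma_{t}^{u})^{-1}$ solves an SDE on $[t,T]$ with bounded coefficients starting from $I_{n}$, so $\mathbb{E}[\sup_{s}|(\Gamma_{s}^{t})^{u}|^{2}\mid\mathcal{F}_{t}]\le C$ uniformly in $t$ and $u$. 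This makes the conditional expectation bounded, yielding $\Vert p^{u}\Vert_{\infty}\le C$; the $\mathcal{K}$-bound on $q^{u}$ is then obtained by the same It\^{o} argument you use. So your remark that the representation ``only yields $L^{p}$ control'' is not quite right: the flow trick upgrades it to $L^{\infty}$. Your It\^{o}/Gronwall approach is a genuine alternative that avoids the fundamental-matrix machinery and is arguably more self-contained; the paper's approach, on the other hand, makes the dependence on the data more transparent through the explicit formula and reuses the same flow estimate in the second-order lemma.
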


\begin{proof}[\textbf{Proof}]
At first, for any given $u(\cdot)\in\mathcal{U}[0,T]$, (\ref{1st-adj-eq}) can be rewritten in the following form:%
\begin{equation}%
\begin{array}
[c]{rl}%
p_{t}^{u}= & \Phi_{x}(X_{T}^{u})+\int_{t}^{T}\left[  \left(  A_{1}%
^{u}(s)\right)  ^{\intercal}p_{s}^{u}+\sum\limits_{i=1}^{d}\left(
B_{1}^{u,i}(s)\right)  ^{\intercal}\left(  q_{s}^{u}\right)
^{i}+f_{x}^{u}(s)\right]  ds\\
& -\sum\limits_{i=1}^{d}\int_{t}^{T}\left(  q_{s}^{u}\right)  ^{i}dW_{s}%
^{i},\text{ \ \ \ }t\in\lbrack0,T],
\end{array}
\label{1st-adj-eq-linear-form}%
\end{equation}
where%
\[%
\begin{array}
[c]{l}%
A_{1}^{u}(t)=\sum\limits_{i=1}^{d}f_{z_{i}}^{u}(t)  \sigma_{x}%
^{u,i}(t)+f_{y}^{u}(t)I_{n}+b_{x}^{u}(t);\\
B_{1}^{u,i}(t)=f_{z_{i}}^{u}(t)I_{n}+ \sigma_{x} %
^{u,i}(t),\text{ \ for \ }i=1,2,\ldots,d;\\
q^{u}=\left(  \left(  q_{{}}^{u}\right)  ^{1},\left(  q_{{}}%
^{u}\right)  ^{2},\ldots,\left(  q_{{}}^{u}\right)  ^{d}\right).
\end{array}
\]
Since $b_{x}$, $\sigma_{x}$, $\Phi_{x}$, $f_{x}$, $f_{y}$ and $f_{z}$ are
bounded, it can be easily verified that both $A^{u}(\cdot)$ and $B^{u}(\cdot)$
are uniformly bounded. Moreover, by Theorem 5.1 in \cite{Karoui97}, (\ref{1st-adj-eq}) admits a unique solution
$\left( p^{u},q^{u}\right)  \in\mathcal{S}_{\mathcal{F}}^{2}([0,T];\mathbb{R}_{{}}%
^{n})\times\mathcal{M}_{{}}^{2}(\mathbb{R}^{n\times d})$. Furthermore, $p^{u}$ can be expressed explicitly by%
\begin{equation}
p_{t}^{u}=\mathbb{E}\left[  \left(  \Lambda_{t}^{u}\right)  ^{\intercal
}\left(  \Gamma_{T}^{u}\right)  ^{\intercal}\Phi_{x}(X_{T}^{u})+\int_{t}%
^{T}\left(  \Lambda_{t}^{u}\right)  ^{\intercal}\left(  \Gamma_{s}^{u}\right)
^{\intercal}f_{x}^{u}(s)ds\mid\mathcal{F}_{t}\right], \quad t\in
\lbrack0,T]\label{p-explicit}%
\end{equation}
(see Chapter 7, Section 2, Theorem 2.2 in \cite{YongZhou}),
where $\Gamma^{u}$ and $\Lambda^{u}$ satisfy the following matrix-valued SDEs respectively:
\[
\Gamma_{t}^{u}=I_{n}+\int_{0}^{t}A_{1}^{u}(s)\Gamma_{s}^{u}ds+\sum_{i=1}%
^{d}\int_{0}^{t}B_{1}^{u,i}(s)\Gamma_{s}^{u}dW_{s}%
^{i}, \quad \text{ \ } t\in\lbrack0,T]
\]
and%
\[%
\begin{array}
[c]{rl}%
\Lambda_{t}^{u}= & I_{n}+%
{\displaystyle\int_{0}^{t}}
\Lambda_{s}^{u}\left[  -A_{1}^{u}(s)+\sum\limits_{i=1}^{d}\left(
B_{1}^{u,i}(s)\right)  ^{2}\right]  ds\\
& -\sum\limits_{i=1}^{d}%
{\displaystyle\int_{0}^{t}}
\Lambda_{s}^{u}  B_{1}^{u,i}(s)dW_{s}^{i}, \quad \text{ \ }%
t\in\lbrack0,T].
\end{array}
\]
By using It\^{o}'s formula, one can verify that $\Lambda_{t}^{u}=\left(
\Gamma_{t}^{u}\right)  ^{-1}$
$\mathbb{P}$-almost surely for all $t\in\lbrack0,T]$. For each fixed
$t\in\lbrack0,T]$, set $\left(  \Gamma_{s}^{t}\right)  ^{u}=\Gamma_{s}%
^{u}\Lambda_{t}^{u}$ for $s\in\lbrack t,T]$. Then it is easy to check that
$\Gamma^{t}$ satisfies the following SDE:%
\begin{equation}
\left(  \Gamma_{s}^{t}\right)  ^{u}=I_{n}+%
{\displaystyle\int_{t}^{s}}
A_{1}^{u}(s)\left(  \Gamma_{r}^{t}\right)  ^{u}dr+\sum_{i=1}^{d}%
{\displaystyle\int_{t}^{s}}
B_{1}^{u,i}(s)\left(  \Gamma_{r}^{t}\right)  ^{u}dW_{r}%
^{i}, \text{    } s \in [t,T].
\label{multi-dynamic-SDE}%
\end{equation}
By using a standard SDE estimate, we obtain $\mathbb{E}\left[  \sup
_{s\in\lbrack t,T]}\left\vert \left(  \Gamma_{s}^{t}\right)  ^{u}\right\vert
^{\beta}\mid\mathcal{F}_{t}\right]  \leq C$ for all $t\in\lbrack0,T]$ and any
$\beta>1$. Then, it follows immediately that%
\[%
\begin{array}
[c]{cl}%
\left\vert p_{t}^{u}\right\vert  & \leq\mathbb{E}\left[  \sup\limits_{s\in
\lbrack t,T]}\left\vert \left(  \Gamma_{s}^{t}\right)  ^{u}\right\vert \left(
\left\vert \Phi_{x}(X_{T}^{u})\right\vert +%
{\displaystyle\int_{t}^{T}}
\left\vert f_{x}^{u}(s)\right\vert ds\right)  \mid\mathcal{F}_{t}\right]  \\
& \leq\left(  \left\Vert \Phi_{x}\right\Vert _{\infty}+\left\Vert
f_{x}\right\Vert _{\infty}T\right)  \left(  \mathbb{E}\left[  \sup
\limits_{s\in\lbrack t,T]}\left\vert \left(  \Gamma_{s}^{t}\right)
^{u}\right\vert ^{2}\mid\mathcal{F}_{t}\right]  \right)  ^{\frac{1}{2}}\\
& \leq C,
\end{array}
\]
where $C$ is independent of $u(\cdot)$.
Hence, we deduce that $\sup_{u(\cdot)\in\mathcal{U}[0,T]}\left\Vert p_{{}}^{u}\right\Vert _{\infty}<\infty$.
Moreover, applying It\^{o}'s formula to $\left \vert p_{t}^{u} \right \vert^{2}$ on $[t,T]$, since $p^{u}$ is bounded, one can obtain that%
\begin{equation}%
\begin{array}
[c]{rl}
&
{\displaystyle\int_{t}^{T}}
\left\vert q_{s}^{u}\right\vert ^{2}ds\\
\leq & \left\vert \Phi_{x}(X_{T}^{u})\right\vert ^{2}+2%
{\displaystyle\int_{t}^{T}}
\left\vert \left(  p_{s}^{u},\left(  A_{1}^{u}(s)\right)  ^{\intercal}%
p_{s}^{u}\right)  \right\vert ds+2%
{\displaystyle\int_{t}^{T}}
\left\vert \left(  p_{s}^{u},f_{x}^{u}(s)\right)  \right\vert ds\\
& +2\sum\limits_{i=1}^{d}%
{\displaystyle\int_{t}^{T}}
\left\vert \left(  p_{s}^{u},\left(  B_{1}^{u,i}(s)\right)  ^{\intercal
}\left(  q_{s}^{u}\right)  ^{i}\right)  \right\vert ds-2\sum\limits_{i=1}^{d}%
{\displaystyle\int_{t}^{T}}
\left(  p_{s}^{u},\left(  q_{s}^{u}\right)  ^{i}\right)  dW_{s}^{i},\\
\leq & \left\Vert \Phi_{x}\right\Vert _{\infty}^{2}+2T\left(  \left\Vert
A_{1}^{u}(\cdot)\right\Vert _{\infty}\left\Vert p_{{}}^{u}\right\Vert
_{\infty}^{2}+\left\Vert f_{x}\right\Vert _{\infty}\left\Vert p_{{}}%
^{u}\right\Vert _{\infty}\right)  \\
& +2\left\Vert p_{{}}^{u}\right\Vert _{\infty}\sum\limits_{i=1}^{d}\left\Vert
B_{1}^{u,i}(\cdot)\right\Vert _{\infty}%
{\displaystyle\int_{t}^{T}}
\left\vert \left(  q_{s}^{u}\right)  ^{i}\right\vert ds-2\sum\limits_{i=1}^{d}%
{\displaystyle\int_{t}^{T}}
\left(  p_{s}^{u},\left(  q_{s}^{u}\right)  ^{i}\right)  dW_{s}^{i}.
\end{array}
\label{q-BMO-1}%
\end{equation}
Note that, by Young's inequality and H\"{o}lder's inequality,
\begin{equation}
\label{q-BMO-2}
\begin{array}
[c]{rl}
& 2\left\Vert p_{{}}^{u}\right\Vert _{\infty}\sum\limits_{i=1}^{d}\left\Vert
B_{1}^{u,i}(\cdot)\right\Vert _{\infty}%
{\displaystyle\int_{t}^{T}}
\left\vert \left(  q_{s}^{u}\right)  ^{i}\right\vert ds\\
\leq & 2T\left\Vert p_{{}}^{u}\right\Vert _{\infty}^{2}\left(  \sum
\limits_{i=1}^{d}\left\Vert B_{1}^{u,i}\right\Vert
_{\infty}\right)  ^{2}+\frac{1}{2T}\left(
{\displaystyle\int_{t}^{T}}
\left\vert \left(  q_{s}^{u}\right)  ^{i}\right\vert ds\right)  ^{2}\\
\leq & 2dT\left\Vert p_{{}}^{u}\right\Vert _{\infty}^{2}\sum\limits_{i=1}%
^{d}\left\Vert B_{1}^{u,i}\right\Vert _{\infty}%
^{2}+\frac{1}{2}%
{\displaystyle\int_{t}^{T}}
\left\vert \left(  q_{s}^{u}\right)  ^{i}\right\vert ^{2}ds
\end{array}
\end{equation}
Consequently, combining (\ref{q-BMO-1}) with (\ref{q-BMO-2}) and taking the conditional expectation on both sides of the inequalities,
we have $\mathbb{E}\left[  \int_{t}^{T}\left\vert q_{s}%
^{u}\right\vert ^{2}ds\mid\mathcal{F}_{t}\right]  \leq C$ for all $t\in
\lbrack0,T]$, which implies that $q_{{}}^{u}$ is bounded in $\mathcal{K}(\mathbb{R}^{n\times d})$ across all $u(\cdot) \in \mathcal{U}[0,T]$.
\end{proof}

Similar to Lemma \ref{adj-1st-lem}, the solution $\left( P^{u}, Q^{u} \right)$ to the second-order
adjoint equation (\ref{2nd-adj-eq}) is uniformly bounded in $L_{\mathcal{F}}^{\infty}([0,T];\mathbb{S}^{n\times n}) \times \left(\mathcal{K}(\mathbb{S}^{n\times n})\right)^{d}$
across all admissible controls.

\begin{lemma}
\label{adj-2nd-lem} Let Assumption \ref{assum-1} hold. Then, for any
$u(\cdot)\in\mathcal{U}[0,T]$, (\ref{2nd-adj-eq}) admits a unique
solution $\left(  P^{u},Q^{u}\right)  \in\mathcal{S}_{\mathcal{F}}%
^{2}([0,T];\mathbb{S}^{n\times n})\times\left(  \mathcal{M}_{{}}%
^{2}(\mathbb{S}^{n\times n})\right)  ^{d}$, where $Q^{u}=\left(  \left(
Q_{{}}^{u}\right)  ^{1},\left(  Q_{{}}^{u}\right)  ^{2},\ldots,\left(  Q_{{}%
}^{u}\right)  ^{d}\right)  $. Moreover, we have
\[
\sup_{u(\cdot)\in\mathcal{U}[0,T]}\left(  \left\Vert P_{{}}^{u}\right\Vert
_{\infty}+\left\Vert Q_{{}}^{u}\right\Vert _{\mathcal{K}}\right)  \leq C,
\]
where $C$ is independent of $u(\cdot)$.
\end{lemma}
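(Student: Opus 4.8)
The plan is to read (\ref{2nd-adj-eq}) as a linear matrix-valued BSDE in the pair $(P^u,Q^u)$ and to follow the architecture of the proof of Lemma \ref{adj-1st-lem}, the one genuinely new difficulty being that the inhomogeneous term $\Psi^u$ defined in (\ref{2nd-adj-eq-multi-data}) is no longer bounded. Indeed, since $\Upsilon(t,X_t^u,p_t^u,q_t^u,u_t)$ is affine in $q_t^u$ and $D^2f^u$, $b_{xx}^u$, $\sigma_{xx}^{u,i}$ are bounded while $p^u$ is bounded by Lemma \ref{adj-1st-lem}, one reads off the pointwise growth bound
\[
\left\vert \Psi_t^u\right\vert \leq C\left( 1+\left\vert q_t^u\right\vert +\left\vert q_t^u\right\vert ^2\right).
\]
First I would rewrite (\ref{2nd-adj-eq}) in the form
\[
P_t^u=\Phi_{xx}(X_T^u)+\int_t^T\left[ \mathcal{A}^u(s)P_s^u+\sum_{i=1}^d\mathcal{B}^{u,i}(s)(Q_s^u)^i+\Psi_s^u\right] ds-\sum_{i=1}^d\int_t^T(Q_s^u)^idW_s^i,
\]
where, after vectorising the matrix equation, $\mathcal{A}^u$ and $\mathcal{B}^{u,i}$ are the uniformly bounded linear maps assembled from $f_y^u$, $b_x^u$, $\sigma_x^{u,i}$ and $f_{z_i}^u$ read off from (\ref{2nd-adj-eq}).

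For existence and uniqueness in $\mathcal{S}_{\mathcal{F}}^2\times(\mathcal{M}^2)^d$ the coefficients are bounded and $\Phi_{xx}(X_T^u)$ is bounded, so it remains to check that the source is admissible. Here the energy inequality (Proposition \ref{energy inequality}) is the right tool: the growth bound on $\Psi^u$ together with Cauchy--Schwarz gives
\[
\mathbb{E}\left[ \left( \int_0^T\left\vert \Psi_s^u\right\vert ds\right)^2\right] \leq C\left( 1+\mathbb{E}\left[ \left( \int_0^T\left\vert q_s^u\right\vert ^2ds\right)^2\right] \right) \leq C\left( 1+2\left\Vert q^u\right\Vert _{\mathcal{K}}^4\right) <\infty,
\]
so the linear BSDE well-posedness (Theorem 5.1 in \cite{Karoui97}) yields a unique solution.

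The crux is the uniform bound. For $\left\Vert P^u\right\Vert _\infty$ I would derive a representation formula exactly as (\ref{p-explicit}): the linear $P$- and $Q$-dependence of the drift is absorbed into fundamental solution matrices $\Gamma^u,\Lambda^u$ solving the analogues of the SDEs in the proof of Lemma \ref{adj-1st-lem}, so that $\mathbb{E}[\sup_{s\in[t,T]}|(\Gamma_s^t)^u|^\beta\mid\mathcal{F}_t]\leq C$ for every $\beta>1$ and
\[
\left\vert P_t^u\right\vert \leq\mathbb{E}\left[ \sup_{s\in[t,T]}\left\vert (\Gamma_s^t)^u\right\vert \left( \left\vert \Phi_{xx}(X_T^u)\right\vert +\int_t^T\left\vert \Psi_s^u\right\vert ds\right) \mid\mathcal{F}_t\right].
\]
The bounded and linear-in-$q^u$ parts of $\Psi^u$ are harmless; for the quadratic part I would peel off the $\Gamma$-factor by a conditional Cauchy--Schwarz inequality and then use the energy inequality with $m=2$, namely $\mathbb{E}[(\int_t^T|q_s^u|^2ds)^2\mid\mathcal{F}_t]\leq 2\left\Vert q^u\right\Vert _{\mathcal{K}}^4\leq C$, to close the estimate. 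This gives $\left\Vert P^u\right\Vert _\infty\leq C$ uniformly in $u(\cdot)$.

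For $\left\Vert Q^u\right\Vert _{\mathcal{K}}$ I would argue as at the end of the proof of Lemma \ref{adj-1st-lem}: applying It\^{o}'s formula to $|P_t^u|^2$ on $[t,T]$, using the boundedness of $P^u$ just obtained together with Young's inequality to absorb the $|Q^u|^2$ terms coming from the $\mathcal{B}^{u,i}(Q^u)^i$ couplings, and controlling the remaining $\Psi^u$ contributions once more by the energy inequality. Taking conditional expectation and discarding the true martingale term yields $\mathbb{E}[\int_t^T|Q_s^u|^2ds\mid\mathcal{F}_t]\leq C$, i.e. $\left\Vert Q^u\right\Vert _{\mathcal{K}}\leq C$, uniformly in $u(\cdot)$. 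The main obstacle throughout is exactly the unboundedness of $\Psi^u$ inherited from $q^u$: every estimate that would be routine for a bounded source must instead be closed through the energy/BMO control of $\int_0^T|q_s^u|^2ds$, and the presence of the $\sup_s|(\Gamma_s^t)^u|$ weight is what forces the use of the second-moment ($m=2$) energy inequality rather than a plain $\mathcal{K}$-bound.
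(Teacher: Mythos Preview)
Your proposal is correct and follows essentially the same route as the paper: rewrite (\ref{2nd-adj-eq}) as a linear BSDE with bounded coefficients and inhomogeneous term $\Psi^u$, control $\Psi^u$ via the conditional energy inequality (the paper proves precisely your key estimate in the form $\mathbb{E}[(\int_t^T|\Psi_s^u|\,ds)^2\mid\mathcal{F}_t]\leq C$), then invoke the representation formula and the It\^{o}-on-$|P^u|^2$ argument from Lemma \ref{adj-1st-lem}. The only cosmetic difference is that the paper carries out the vectorisation explicitly, writing down the $n^2\times n^2$ matrices $A_2^u$, $B_2^{u,i}$, $D^{u,i}$, $I_{n^2}^*$ so that the equation literally has the shape of (\ref{1st-adj-eq-linear-form}), whereas you summarise this step abstractly.
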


\begin{proof}[\textbf{Proof}]
For any given $u(\cdot)\in\mathcal{U}[0,T]$, by Theorem 5.1 in \cite{Karoui97} with the boundedness of $b_{x}$, $\sigma
_{x}$, $\Phi_{x}$, $f_{x}$, $b_{xx}$, $\sigma_{xx}$, $\Phi_{xx}$ and $f_{xx}$,
(\ref{2nd-adj-eq}) admits a unique solution $\left(  P^{u},Q^{u}\right)  \in\mathcal{S}%
_{\mathcal{F}}^{2}([0,T];\mathbb{S}^{n\times n})\times\left(  \mathcal{M}_{{}%
}^{2}(\mathbb{S}^{n\times n})\right)  ^{d}$.
Furthermore, denote by $\left(  P^{u}\right)  ^{j}$ (resp. $\left(  \Psi_{{}%
}^{u}\right)  ^{j}$, $\left(  Q^{u}\right)  ^{ji}$) the $j$th column of
$P^{u}$ (resp. $\Psi_{{}}^{u}$, $\left(  Q^{u}\right)  ^{i}$) for
$i=1,2,\ldots,d$, $j=1,2,\ldots,n$, and $I_{n}^{ij}\in\mathbb{R}_{{}}^{n\times n}$ the matrix whose
elements equal to 0 except that the one in $i$th row and $j$th column
equals to $1$. Set%
\[%
\begin{array}
[c]{l}%
\tilde{P}_{t}^{u}:=\left(  \left(  \left(  P_{t}^{u}\right)  ^{1}\right)
^{\intercal},\left(  \left(  P_{t}^{u}\right)  ^{2}\right)  ^{\intercal
},\ldots,\left(  \left(  P_{t}^{u}\right)  ^{n}\right)  ^{\intercal}\right)
^{\intercal}\in\mathbb{R}_{{}}^{n^{2}},\\
\left(  \tilde{Q}_{s}^{u}\right)  ^{i}:=\left(  \left(  \left(  Q_{t}%
^{u}\right)  ^{1i}\right)  ^{\intercal},\left(  \left(  Q_{t}^{u}\right)
^{2i}\right)  ^{\intercal},\ldots,\left(  \left(  Q_{t}^{u}\right)
^{ni}\right)  ^{\intercal}\right)  ^{\intercal}\in\mathbb{R}_{{}}^{n^{2}},
\end{array}
\]%
\[
A_{2}^{u}(t):=\left(
\begin{array}
[c]{ccc}%
\left(  b_{x}^{u}(t)\right)  ^{\intercal} &  & \\
& \ddots & \\
&  & \left(  b_{x}^{u}(t)\right)  ^{\intercal}%
\end{array}
\right)  \in\mathbb{R}_{{}}^{n^{2}\times n^{2}},\text{ }%
\]
\[
I_{n^{2}}^{\ast}:=\left(
\begin{array}
[c]{ccc}%
I_{n}^{11} & \cdots & I_{n}^{n1}\\
\vdots & \ddots & \vdots\\
I_{n}^{1n} & \cdots & I_{n}^{nn}%
\end{array}
\right)  \in\mathbb{S}_{{}}^{n^{2}\times n^{2}},
\]
\[%
\begin{array}
[c]{l}%
B_{2}^{u,i}(t):=\left(
\begin{array}
[c]{ccc}%
\left(  \sigma_{x}^{u,i}(t)\right)  ^{\intercal} &  & \\
& \ddots & \\
&  & \left(  \sigma_{x}^{u,i}(t)\right)  ^{\intercal}%
\end{array}
\right)  \in\mathbb{R}_{{}}^{n^{2}\times n^{2}},\\
D_{{}}^{u,i}(t):=\left(
\begin{array}
[c]{ccc}%
\left(  \sigma_{x_{1}}^{u,i}(t)\right)  ^{1}I_{n} & \cdots & \left(
\sigma_{x_{1}}^{u,i}(t)\right)  ^{n}I_{n}\\
\vdots & \ddots & \vdots\\
\left(  \sigma_{x_{n}}^{u,i}(t)\right)  ^{1}I_{n} & \cdots & \left(
\sigma_{x_{n}}^{u,i}(t)\right)  ^{n}I_{n}%
\end{array}
\right)  \in\mathbb{R}_{{}}^{n^{2}\times n^{2}},
\end{array}
\]
where $\left( \left( \sigma_{x_{1}}^{u,i}(t)\right)  ^{k},\ldots, \left( \sigma_{x_{n}}^{u,i}(t)\right)  ^{k}\right)$ is the gradient of the $k$th component
of $\sigma^{u,i}(t)$, $k=1,\ldots,n$.
Then (\ref{2nd-adj-eq}) can be rewritten in the vector-valued form:%
\begin{equation}%
\begin{array}
[c]{cl}%
\tilde{P}_{t}^{u}= & \tilde{P}_{T}^{u}+%
{\displaystyle\int_{t}^{T}}
\left\{  \left[  f_{y}^{u}(s)I_{n^{2}}+\sum\limits_{i=1}^{d}f_{z_{i}}%
^{u}(s)\left(  I_{n^{2}}+I_{n^{2}}^{\ast}\right)   B_{2}^{u,i}(s)\right.  \right.  \\
& +\left.  \left(  I_{n^{2}}+I_{n^{2}}^{\ast}\right)  A_{2}^{u}(s)\sum
\limits_{i=1}^{d}  D_{{}}^{u,i}(s)  B_{2}%
^{u,i}(s)\right]  \tilde{P}_{s}^{u}\\
& +\left.  \sum\limits_{i=1}^{d}\left[  f_{z_{i}}^{u}(s)+\left(  I_{n^{2}%
}+I_{n^{2}}^{\ast}\right)   B_{2}^{u,i}(s)\right]  \left(
\tilde{Q}_{s}^{u}\right)  ^{i}+\tilde{\Psi}_{s}^{u}\right\}  ds\\
& -%
\sum\limits_{i=1}^{d}
{\displaystyle\int_{t}^{T}}
\left(  \tilde{Q}_{s}^{u}\right)  ^{i}dW_{s}^{i} \quad \text{ for } t \in [0,T],
\end{array}
\label{2nd-adj-eq-multi-new}%
\end{equation}
where $\tilde{\Psi}_{t}^{u}=\left(  \left(  \left(  \Psi_{t}^{u}\right)  ^{1}\right)
^{\intercal},\left(  \left(  \Psi_{t}^{u}\right)  ^{2}\right)  ^{\intercal
},...,\left(  \left(  \Psi_{t}^{u}\right)  ^{n}\right)  ^{\intercal}\right)
^{\intercal}$ and $I_{n^{2}}$ is the $n^{2}\times n^{2}$ identity matrix.
Obviously, (\ref{2nd-adj-eq-multi-new}) is same as the form of (\ref{1st-adj-eq-linear-form}).
For all $t\in\lbrack0,T]$, recalling (\ref{2nd-adj-eq-multi-data}) and by Lemma \ref{adj-1st-lem}, it can be verified that%
\begin{equation}\label{adj-2st-nonhomo}
\mathbb{E}\left[  \left(  \int_{t}^{T}\left\vert \tilde{\Psi}_{s}%
^{u}\right\vert ds\right)  ^{2}\mid\mathcal{F}_{t}\right]  =\mathbb{E}\left[
\left(  \int_{t}^{T}\left\vert \Psi_{s}^{u}\right\vert ds\right)  ^{2}%
\mid\mathcal{F}_{t}\right]  \leq C,
\end{equation}
where $C$ is independent of $u(\cdot)$.
Indeed, for instance, it follows immediately from the energy inequality and Lemma \ref{adj-1st-lem} that%
\[%
\begin{array}
[c]{l}%
\mathbb{E}\left[  \left(
{\displaystyle\int_{t}^{T}}
\sum\limits_{j,k=1}^{d}\left\vert f_{z_{j}z_{k}}^{u}(s)\left(  q_{s}%
^{u}\right)  ^{j}\left(  \left(  q_{s}^{u}\right)  ^{k}\right)  ^{\intercal
}\right\vert ds\right)  ^{2}\mid\mathcal{F}_{t}\right]  \\
\leq d^{4}\left\Vert f_{zz}\right\Vert _{\infty}^{2}\mathbb{E}\left[  \left(
\int_{t}^{T}\left\vert q_{s}^{u}\right\vert ^{2}dt\right)  ^{2}\mid
\mathcal{F}_{t}\right]  \\
\leq2d^{4}\left\Vert f_{zz}\right\Vert _{\infty}^{2}\left\Vert q_{{}}%
^{u}\right\Vert _{\mathcal{K}}^{2}\\
\leq C.
\end{array}
\]
The other terms in (\ref{2nd-adj-eq-multi-data}) can be estimated similarly so that (\ref{adj-2st-nonhomo}) holds.

Therefore, similar to the proof of Lemma \ref{adj-1st-lem}, due to (\ref{adj-2st-nonhomo}),
one can obtain that $\sup_{u(\cdot)\in\mathcal{U}[0,T]}\left\Vert \tilde{P}^{u}\right\Vert
_{\infty}<\infty$.
This implies that $\sup_{u(\cdot)\in\mathcal{U}%
[0,T]}\left\Vert P^{u}\right\Vert _{\infty}<\infty$.
Then, applying It\^{o}'s formula to $\left \vert \tilde{P}_{s}^{u} \right \vert^{2}$ on $[t,T]$ and
noticing that $\left\vert Q_{s}^{u}\right\vert^{2}=\left\vert \tilde{Q}_{s}^{u}\right\vert^{2}$, we finally obtain the desired result.
\end{proof}

\subsection{Convergence of the Modified MSA}

In order to prove the convergence of Algorithm \ref{algorithm-MSA}, we need
the following lemma about the error estimate. It will be seen that if we
directly minimize $\mathcal{H}$ instead of $\mathcal{\tilde{H}}$ (Step 5 in
Algorithm \ref{algorithm-MSA}), then the updated control variable may fail to
make the cost functional descend efficiently.

For any given $u(\cdot)$, $v(\cdot)\in\mathcal{U}[0,T]$,
define a new probability $\mathbb{\tilde{P}}$ and a Brownian motion $\tilde
{W}$ with respect to $\mathbb{\tilde{P}}$ by
\begin{equation}
d\mathbb{\tilde{P}}:=\mathcal{E}\left(  \sum\limits_{i=1}^{d}\int_{0}%
^{T}f_{z_{i}}^{u}(t)dW_{t}^{i}\right)  d\mathbb{P};\text{ \ }\tilde{W}%
_{t}^{i}:=W_{t}^{i}-\int_{0}^{t}f_{z_{i}}^{u}(s)ds, \text{ } i=1,2,\ldots,d.
\label{new-P-W}%
\end{equation}
Denote by $\mathbb{\tilde{E}}\left[  \cdot\right]  $ the mathematical
expectation corresponding to $\mathbb{\tilde{P}}$.

\begin{lemma}
\label{lem-costfunc-dominate} Let Assumption \ref{assum-1} hold. Then there exists a
universal constant $C>0$ such that%
\begin{equation}%
\begin{array}
[c]{l}%
J\left(  v(\cdot)\right)  -J\left(  u(\cdot)\right)  \\
\leq e^{\left\Vert f_{y}\right\Vert _{\infty}T}\mathbb{\tilde{E}}\left[
\int_{0}^{T}\mathcal{\hat{H}}^{+}\mathcal{(}t\mathcal{)}dt\right]
-e^{-\left\Vert f_{y}\right\Vert _{\infty}T}\mathbb{\tilde{E}}\left[  \int%
_{0}^{T}\mathcal{\hat{H}}^{-}\mathcal{(}t\mathcal{)}dt\right]  \\
\ \ +\ C\left\{  \sum\limits_{\psi\in\{b,\sigma\}}\mathbb{\tilde{E}}\left[
\int_{0}^{T}\left\vert \psi(t,X_{t}^{u},v_{t})-\psi(t,X_{t}^{u},u_{t}%
)\right\vert ^{2}dt\right]  \right.  \\
\ \ +\mathbb{\tilde{E}}\left[  \int_{0}^{T}\left\vert f(t,\Theta_{t}^{u}%
,v_{t})-f(t,\Theta_{t}^{u},u_{t})\right\vert ^{2}dt\right]  \\
\ \ +\left.  \sum\limits_{w\in\{x,y,z\}}\mathbb{\tilde{E}}\left[  \int_{0}%
^{T}\left\vert G_{w}(t,\Theta_{t}^{u},p_{t}^{u},q_{t}^{u},v_{t},u_{t}%
)-G_{w}(t,\Theta_{t}^{u},p_{t}^{u},q_{t}^{u},u_{t},u_{t})\right\vert
^{2}dt\right]  \right\}  ,
\end{array}
\label{est-Jv-Ju}%
\end{equation}
where $\Theta_{t}^{u}$ is defined in (\ref{def-notation}),
\[
\mathcal{\hat{H}(}t\mathcal{)}=\mathcal{H}(t,\Theta_{t}^{u},%
p_{t}^{u},q_{t}^{u},P_{t}^{u},v_{t},u_{t})-\mathcal{H}(t,\Theta_{t}^{u},%
p_{t}^{u},q_{t}^{u},P_{t}^{u},u_{t},u_{t}),\text{    } t\in[0,T]
\]
and $\hat{\mathcal{H}}^{+}(\cdot)$, $\hat{\mathcal{H}}^{-}(\cdot)$ are respectively the positive, negative part of $\hat{\mathcal{H}}(\cdot)$.
\end{lemma}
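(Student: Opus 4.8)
The plan is to reduce the inequality to a discounted representation of the increment $\delta Y_0 := Y_0^v - Y_0^u = J(v(\cdot)) - J(u(\cdot))$ in terms of the difference dynamics. Writing $\delta X = X^v - X^u$, $\delta Y = Y^v - Y^u$ and $\delta Z = Z^v - Z^u$, the pair $(\delta Y,\delta Z)$ solves a BSDE with driver $f(t,X_t^v,Y_t^v,Z_t^v,v_t) - f(t,\Theta_t^u,u_t)$ and terminal value $\delta\Phi_T := \Phi(X_T^v)-\Phi(X_T^u)$. First I would use the fundamental theorem of calculus to split this driver as $A_t\,\delta Y_t + B_t^\intercal\delta Z_t + g_t$, where the averaged Jacobians obey $|A_t|\le\|f_y\|_\infty$ and $|B_t|\le\|f_z\|_\infty$, and the inhomogeneity $g_t = f(t,X_t^v,Y_t^u,Z_t^u,v_t) - f(t,\Theta_t^u,u_t)$ freezes $(Y^u,Z^u)$. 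This renders the equation for $\delta Y$ \emph{linear}.

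The second step uses the Girsanov transformation (\ref{new-P-W}), whose drift shift is exactly $f_z^u$. Since $f_z$ is bounded, $\sum_{i=1}^d\int_0^{\cdot}f_{z_i}^u(s)\,dW_s^i$ is automatically a BMO martingale, so $\mathbb{\tilde{P}}$ is well defined and $\tilde W$ is a $\mathbb{\tilde{P}}$-Brownian motion. Applying It\^o's formula to the discounted increment $\gamma_t\,\delta Y_t$, where $\gamma_t := \exp(\int_0^t A_r\,dr)$, cancels the $A_t\,\delta Y_t$ term, and under $\mathbb{\tilde{P}}$ the $f_z^u$-part of the $B_t$-drift is absorbed into $d\tilde W$; taking $\mathbb{\tilde{E}}$ then gives $\delta Y_0 = \mathbb{\tilde{E}}[\gamma_T\,\delta\Phi_T + \int_0^T\gamma_t\,g_t\,dt] + \mathbb{\tilde{E}}[\int_0^T\gamma_t(B_t - f_z^u(t))^\intercal\delta Z_t\,dt]$. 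Because $e^{-\|f_y\|_\infty T}\le\gamma_t\le e^{\|f_y\|_\infty T}$, once the leading part is identified with $\int_0^T\gamma_t\,\hat{\mathcal{H}}(t)\,dt$ the elementary estimate $\gamma_t\hat{\mathcal{H}}(t)\le e^{\|f_y\|_\infty T}\hat{\mathcal{H}}^+(t) - e^{-\|f_y\|_\infty T}\hat{\mathcal{H}}^-(t)$ reproduces the first line of (\ref{est-Jv-Ju}); the residual $\delta Z$-term, whose coefficient $B_t - f_z^u(t)$ is of the order of the state and control increments, is quadratic and will be absorbed into the correction terms.

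The third step is the second-order expansion that turns the $\delta X$-dependence of $\delta\Phi_T$ and $g_t$ into the drift and diffusion parts of $\hat{\mathcal{H}}$. Here I would apply It\^o's formula to $(p_t^u)^\intercal\delta X_t$ and to $\tfrac12\,\delta X_t^\intercal P_t^u\delta X_t$, using the adjoint equations (\ref{1st-adj-eq}) and (\ref{2nd-adj-eq}). The martingale part $q^u$ of $p^u$ cross-varies with the diffusion of $\delta X$ to generate $\sum_{i=1}^d((q_t^u)^i)^\intercal(\sigma^i(t,X_t^u,v_t)-\sigma^i(t,X_t^u,u_t))$; the $G_y$, $G_z$ and $\Upsilon$ contributions in (\ref{1st-adj-eq}) are designed precisely to absorb the $f_y$ and $f_z$ couplings consistently with the Girsanov shift and with the hidden $z$-argument shift $\tilde{\Delta}$ inside $G$; and the terminal value $\Phi_{xx}(X_T^u)$ of $P^u$ together with the quadratic diffusion increment matches the term $\tfrac12\sum_{i=1}^d(\sigma^i(t,X_t^u,v_t)-\sigma^i(t,X_t^u,u_t))^\intercal P_t^u(\sigma^i(t,X_t^u,v_t)-\sigma^i(t,X_t^u,u_t))$ of $\mathcal{H}$. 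Collecting these contributions, the leading quantity becomes $\int_0^T\gamma_t\,\hat{\mathcal{H}}(t)\,dt$, and everything left over is either quadratic in $\delta X$ or is a pure control variation, hence dominated by the squared increments of $b$, $\sigma$, $f$ and of the $G_w$'s appearing on the right-hand side of (\ref{est-Jv-Ju}).

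The final and hardest step is to make all the remainder estimates rigorous despite the \emph{unboundedness} of $q^u$ and $Q^u$. By Lemma \ref{adj-1st-lem} and Lemma \ref{adj-2nd-lem}, $p^u$ and $P^u$ are bounded whereas $q^u\cdot W$ and $Q^u\cdot W$ are merely BMO, so every cross term containing $q^u$ or $Q^u$ (and the bound on $\int_0^T|\delta Z_t|^2\,dt$ needed for the residual $\delta Z$-term) must be handled with Fefferman's inequality (Theorem \ref{thm-Fefferman-ineq}), the energy inequality (Proposition \ref{energy inequality}) and the $\mathcal{H}^p$--BMO estimate (Proposition \ref{prop-Hp-BMO}), rather than by a supremum bound. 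One must also control $\mathbb{\tilde{E}}[\sup_{t}|\delta X_t|^p]$ and relate it to the standard SDE bounds under $\mathbb{P}$; this is where Lemma \ref{lem-equiv-BMO-norm}, together with the John--Nirenberg $L^p$-integrability of the density $\mathcal{E}(\sum_{i=1}^d\int_0^{\cdot}f_{z_i}^u(s)\,dW_s^i)$, guarantees that the relevant BMO and moment bounds survive the change of measure. I expect the principal obstacle to be exactly this bookkeeping: tracking how the $Z^u$-coupling and the Girsanov shift interact with the $\tilde{\Delta}$-shift built into $G$, while bounding all of the resulting BMO cross terms uniformly, so that the whole residual collapses into the squared-increment terms of (\ref{est-Jv-Ju}) with a single universal constant $C$.
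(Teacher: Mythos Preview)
Your proposal is correct in spirit and invokes all of the right tools (Girsanov with $f_z^u$, the uniform bounds on $p^u,P^u$ from Lemmas~\ref{adj-1st-lem}--\ref{adj-2nd-lem}, and the BMO machinery for the $q^u,Q^u$ cross terms), but it organizes the computation differently from the paper, and this difference creates extra work.

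The paper does \emph{not} first linearize $\delta Y$ with averaged Jacobians $A_t,B_t$ and then handle $(p^u)^\intercal\delta X$ and $\tfrac12\,\delta X^\intercal P^u\delta X$ separately. Instead it forms, from the outset, the single process
\[
\eta(t)=\delta Y_t-(p_t^u)^\intercal\delta X_t-\tfrac12\,\mathrm{tr}\{P_t^u\,\delta X_t\,\delta X_t^\intercal\}
\]
together with an explicit companion $\zeta(t)$, and applies It\^o's formula once to $\eta$. The point of this choice is that the resulting linear BSDE for $\eta$ has \emph{frozen} coefficients $f_y^u,f_z^u$ (not averaged ones), so the Girsanov shift by $f_z^u$ removes the $\zeta$-drift exactly, and the subsequent discount factor $\exp\{\int_0^{\cdot} f_y^u\}$ matches the $G_y$-term in the adjoint equation (\ref{1st-adj-eq}) without residue. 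All Taylor remainders are collected in a single inhomogeneity $\phi$, and the estimate reduces to bounding $\tilde{\mathbb{E}}[|\eta(T)|+\int_0^T|\phi_t|\,dt]$.

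Your route---averaged $A_t,B_t$ for $\delta Y$, then Girsanov by $f_z^u$, then separate It\^o expansions of $(p^u)^\intercal\delta X$ and $\delta X^\intercal P^u\delta X$---is workable, but it forces you to carry two mismatch terms: the residual $\int\gamma_t(B_t-f_z^u)^\intercal\delta Z_t\,dt$ you already noted, and an analogous $(A_t-f_y^u)$ mismatch when you try to merge the $\gamma_t$-discount with the $f_y^u$-drift of the adjoint. Both are indeed quadratic (since $A_t-f_y^u$ and $B_t-f_z^u$ are controlled by $|\delta X|+|\delta Y|+|\delta Z|$ plus the $G_y,G_z$ control increments, up to the $\tilde\Delta$-shift which is $O(\|p^u\|_\infty|\sigma(v)-\sigma(u)|)$), so they can be absorbed into the right-hand side; but this adds a layer of bookkeeping that the paper's one-shot $\eta$-expansion avoids. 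The paper's organization also makes it transparent which remainder terms actually need Fefferman's inequality (only $R_3$ and one piece of $R_4$, involving $q^u$ against $|\delta X|^2$), whereas in your decomposition these BMO terms are spread across the separate It\^o expansions.
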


\begin{proof}[\textbf{Proof}]
Let $u(\cdot)$, $v(\cdot)\in\mathcal{U}[0,T]$ be given.
Denote by
\[
\eta(t)=Y_{t}^{v}-Y_{t}^{u}-\left(  p_{t}^{u}\right)  ^{\intercal}(X_{t}%
^{v}-X_{t}^{u})-\frac{1}{2}\mathrm{tr}\left\{  P_{t}^{u}(X_{t}^{v}-X_{t}%
^{u})\left(  X_{t}^{v}-X_{t}^{u}\right)  ^{\intercal}\right\}
\]
and, for $i=1,\ldots,d$,
\begin{equation}%
\begin{array}
[c]{rl}%
\zeta^{i}(t)= & \left(  Z_{t}^{v}\right)  ^{i}-\left(  Z_{t}^{u}\right)
^{i}-\tilde{\Delta}^{i}(t,X_{t}^{u},p_{t}^{u},v_{t},u_{t})\\
& -\left(  \Upsilon^{i}(t,X_{t}^{u},p_{t}^{u},q_{t}^{u},u_{t})\right)
^{\intercal}(X_{t}^{v}-X_{t}^{u})-R_{1}^{i}(t)\\
& -\left(  p_{t}^{u}\right)  ^{\intercal}\left[  \sigma_{x}^{i}(t,X_{t}%
^{u},v_{t})-\sigma_{x}^{u,i}(t)\right]  (X_{t}^{v}-X_{t}%
^{u})\text{ \ }\\
& -\frac{1}{2}\sum\limits_{j=1}^{n}\left(  p_{t}^{u}\right)  ^{j}%
\mathrm{tr}\left\{  \left(  \sigma_{xx}^{u,i}(t)\right)  ^{j}\left(  X_{t}%
^{v}-X_{t}^{u}\right)  \left(  X_{t}^{v}-X_{t}^{u}\right)  ^{\intercal
}\right\}  -\frac{1}{2}R_{2}^{i}(t)\\
& -\frac{1}{2}\mathrm{tr}\left\{  \left[  \left(  \sigma_{x}^{u,i}(t)\right)
^{\intercal}P_{t}^{u}+\left(  P_{t}^{u}\right)  ^{\intercal}\sigma_{x}%
^{u,i}(t)+Q_{t}^{u,i}\right]  (X_{t}^{v}-X_{t}^{u})\left(  X_{t}^{v}-X_{t}%
^{u}\right)  ^{\intercal}\right\}  ,
\end{array}
\label{zeta}%
\end{equation}
where
\[%
\begin{array}
[c]{rl}
& R_{1}^{i}(t)\\
= & \sum\limits_{j=1}^{n}\left(  p_{t}^{u}\right)  ^{j}\cdot\int_{0}^{1}%
\int_{0}^{1}\\
& \lambda\left(  X_{t}^{v}-X_{t}^{u}\right)  ^{\intercal}\left[  \sigma
_{xx}^{ji}(t,X_{t}^{u}+\lambda\mu(X_{t}^{v}-X_{t}^{u}),v_{t})-\left(
\sigma_{xx}^{u,i}(t)\right)  ^{j}\right]  (X_{t}^{v}-X_{t}^{u})d\lambda d\mu;
\end{array}
\]
\[%
\begin{array}
[c]{rl}%
R_{2}^{i}(t)= & \left(  X_{t}^{v}-X_{t}^{u}\right)  ^{\intercal}P_{t}^{u}%
\Pi_{1}^{i}(t)+\left(  \Pi_{1}^{i}(t)\right)  ^{\intercal}\left(  P_{t}%
^{u}\right)  ^{\intercal}(X_{t}^{v}-X_{t}^{u});
\end{array}
\]%
\[%
\begin{array}
[c]{rl}%
\Pi_{1}^{i}(t)= & \int_{0}^{1}\left[  \sigma_{x}^{u,i}(t,X_{t}^{u}%
+\lambda(X_{t}^{v}-X_{t}^{u}),v_{t})-\sigma_{x}^{u,i}(t)\right]  d\lambda
\cdot(X_{t}^{v}-X_{t}^{u})\\
& +\sigma^{i}(t,X_{t}^{u},v_{t})-\sigma^{u,i}(t).
\end{array}
\]
Denote $\tilde{D}^{2}f^{u,v}(t)=2\int_{0}^{1}\int_{0}^{1}\lambda D^{2}f(t,\Theta_{t}^{u,v}
+\lambda\mu(\Theta_{t}^{v}-\Theta_{t}^{u,v}),v_{t})d\mu d\lambda$ and note that%
\[%
\begin{array}
[c]{rl}%
G_{x}(t,x,y,z,p,q,v,u)= & b_{x}^{\intercal}(t,x,v)p+\sum\limits_{i=1}%
^{d}\left(  \sigma_{x}^{i}(t,x,v)\right)  ^{\intercal}q^{i}\\
& +f_{x}(t,x,y,z+\tilde{\Delta}(t,x,p,v,u),v)\\
& +\tilde{\Delta}_{x}^{\intercal}(t,x,p,v,u)f_{z}(t,x,y,z+\tilde{\Delta
}(t,x,p,v,u),v),\\
G_{y}(t,x,y,z,p,q,v,u)= & f_{y}(t,x,y,z+\tilde{\Delta}(t,x,p,v,u),v),\\
G_{z}(t,x,y,z,p,q,v,u)= & f_{z}(t,x,y,z+\tilde{\Delta}(t,x,p,v,u),v).
\end{array}
\]
Then, applying It\^{o}'s formula to $\eta$ on $[t,T]$, we have%
\begin{equation}%
\begin{array}
[c]{rl}%
\eta(t)= & \eta(T)+%
{\displaystyle\int_{t}^{T}}
\left\{  f_{y}^{u}(s)\eta(s)+\sum\limits_{i=1}^{d}f_{z_{i}}^{u}(s)\zeta
^{i}(s)+\left[  f_{y}^{u,v}(s)-f_{y}^{u}(s)\right]  (Y_{s}^{v}-Y_{s}%
^{u})\right.  \\
& +\mathcal{H}(s,\Theta_{s}^{u},p_{s}^{u},q_{s}^{u},P_{s}^{u},v_{s}%
,u_{s})-\mathcal{H}(s,\Theta_{s}^{u},p_{s}^{u},q_{s}^{u},P_{s}^{u},u_{s}%
,u_{s})\\
& +\left(  G_{x}(s,\Theta_{s}^{u},p_{s}^{u},q_{s}^{u},v_{s},u_{s}%
)-G_{x}(s,\Theta_{s}^{u},p_{s}^{u},q_{s}^{u},u_{s},u_{s})\right)  ^{\intercal
}(X_{s}^{v}-X_{s}^{u})\\
& +\sum\limits_{i=1}^{d}\left[  f_{z_{i}}^{u,v}(s)-f_{z_{i}}^{u}(s)\right]
\cdot\left[  \left(  Z_{s}^{v}\right)  ^{i}-\left(  Z_{s}^{u}\right)
^{i}-\tilde{\Delta}^{i}(s,X_{s}^{u},p_{s}^{u},v_{s},u_{s})\right.  \\
& -\left.  \left(  p_{s}^{u}\right)  ^{\intercal}\left(  \sigma_{x}%
^{i}(s,X_{s}^{u},v_{s})-\sigma_{x}^{u,i}(s)\right)  (X_{s}%
^{v}-X_{s}^{u})\right]  \\
& +\left.  \sum\limits_{i=1}^{d}f_{z_{i}}^{u}(s)\left(  R_{1}^{i}(s)+\frac
{1}{2}R_{2}^{i}(s)\right)  +R_{3}(s)+R_{4}(s)+R_{5}(s)+\frac{1}{2}%
R_{6}(s)\right\}  ds\\
& -\sum\limits_{i=1}^{d}%
{\displaystyle\int_{t}^{T}}
\zeta^{i}(s)dW_{s}^{i},
\end{array}
\label{eta-1}%
\end{equation}
where%
\[
\eta(T)=\int_{0}^{1}\int_{0}^{1}\lambda\left(  X_{T}^{v}-X_{T}^{u}\right)
^{\intercal}\left[  \Phi_{xx}(X_{T}^{u}+\lambda\mu(X_{T}^{v}-X_{T}^{u}%
))-\Phi_{xx}(X_{T}^{u})\right]  (X_{t}^{v}-X_{t}^{u})d\lambda d\mu;
\]%
\[%
\begin{array}
[c]{l}%
\ \ \ R_{3}(t)\\
=\sum\limits_{i=1}^{d}\sum\limits_{j=1}^{n}\left(  q_{t}^{u}\right)
^{ji}\cdot\int_{0}^{1}\int_{0}^{1}\\
\lambda\left(  X_{t}^{v}-X_{t}^{u}\right)  ^{\intercal}\left[  \sigma
_{xx}^{ji}(t,X_{t}^{u}+\lambda\mu(X_{t}^{v}-X_{t}^{u}),v_{t})-\left(
\sigma_{xx}^{u,i}(t)\right)  ^{j}\right]  (X_{t}^{v}-X_{t}^{u})d\lambda d\mu;
\end{array}
\]
\[%
\begin{array}
[c]{rl}
& R_{4}(t)\\
= & \dfrac{1}{2}\left[  \left(  X_{t}^{v}-X_{t}^{u}\right)  ^{\intercal}%
,Y_{t}^{v}-Y_{t}^{u},\left(  \left(  Z_{t}^{v}\right)  -\left(  Z_{t}%
^{u}\right)  -\tilde{\Delta}(t,X_{t}^{u},p_{t}^{u},v_{t},u_{t})\right)
^{\intercal}\right]  \\
& \cdot\left[  \tilde{D}^{2}f^{u,v}(t)-D^{2}f^{u}(t)\right]  \\
& \cdot\left[  \left(  X_{t}^{v}-X_{t}^{u}\right)  ^{\intercal},Y_{t}%
^{v}-Y_{t}^{u},\left(  \left(  Z_{t}^{v}\right)  -\left(  Z_{t}^{u}\right)
-\tilde{\Delta}(t,X_{t}^{u},p_{t}^{u},v_{t},u_{t})\right)  ^{\intercal
}\right]  ^{\intercal},\\
& +\dfrac{1}{2}\left[  \mathbf{0}_{n\times1}^{\intercal},Y_{t}^{v}-Y_{t}%
^{u}-\left(  X_{t}^{v}-X_{t}^{u}\right)  ^{\intercal}p_{t}^{u},\right.  \\
& \left.  \left(  \left(  Z_{t}^{v}\right)  -\left(  Z_{t}^{u}\right)
-\tilde{\Delta}(t,X_{t}^{u},p_{t}^{u},v_{t},u_{t})-\Upsilon^{\intercal
}(t,X_{t}^{u},p_{t}^{u},q_{t}^{u},u_{t})\left(  X_{t}^{v}-X_{t}^{u}\right)
\right)  ^{\intercal}\right]  \\
& \cdot D^{2}f^{u}(t)\cdot\left[  \mathbf{0}_{n\times1}^{\intercal},Y_{t}%
^{v}-Y_{t}^{u}-\left(  X_{t}^{v}-X_{t}^{u}\right)  ^{\intercal}p_{t}%
^{u},\right.  \\
& \left.  \left(  \left(  Z_{t}^{v}\right)  -\left(  Z_{t}^{u}\right)
-\tilde{\Delta}(t,X_{t}^{u},p_{t}^{u},v_{t},u_{t})-\Upsilon^{\intercal
}(t,X_{t}^{u},p_{t}^{u},q_{t}^{u},u_{t})\left(  X_{t}^{v}-X_{t}^{u}\right)
\right)  ^{\intercal}\right]  ^{\intercal},
\end{array}
\]
where $\mathbf{0}_{n\times 1}= ( \overbrace{ 0,\ldots,0 }^{n} )^{\intercal}$;
\begin{align*}
&  R_{5}(t)=\sum_{j=1}^{n}\left(  p_{t}^{u}\right)  ^{j}\\
&  \cdot\int_{0}^{1}\int_{0}^{1}\lambda\left(  X_{t}^{v}-X_{t}^{u}\right)
^{\intercal}\left[  b_{xx}^{j}(t,X_{t}^{u}+\lambda\mu(X_{t}^{v}-X_{t}%
^{u}),v_{t})-b_{xx}^{u,j}(t)\right]  (X_{t}^{v}-X_{t}^{u})d\lambda d\mu;
\end{align*}
\[%
\begin{array}
[c]{rl}
& R_{6}(t)\\
= & \left(  X_{t}^{v}-X_{t}^{u}\right)  ^{\intercal}P_{t}^{u}\Pi
_{2}(t)+\left(  \Pi_{2}(t)\right)  ^{\intercal}\left(  P_{t}^{u}\right)
^{\intercal}(X_{t}^{v}-X_{t}^{u})\\
& +\sum\limits_{i=1}^{d}\left[  \left(  X_{t}^{v}-X_{t}^{u}\right)
^{\intercal}Q_{t}^{u,i}\Pi_{1}^{i}(t)+\left(  \Pi_{1}^{i}(t)\right)
^{\intercal}\left(  Q_{t}^{u,i}\right)  ^{\intercal}(X_{t}^{v}-X_{t}%
^{u})\right]  \\
& +\sum\limits_{i=1}^{d}\left[  \left(  X_{t}^{v}-X_{t}^{u}\right)
^{\intercal}\left(  \sigma_{x}^{u,i}(t)\right)  ^{\intercal}P_{t}^{u}\Pi
_{1}^{i}(t)+\left(  \Pi_{1}^{i}(t)\right)  ^{\intercal}\left(  P_{t}%
^{u}\right)  ^{\intercal}\sigma_{x}^{u,i}(t)(X_{t}^{v}-X_{t}^{u})\right]  \\
& +\sum\limits_{i=1}^{d}\left(  \Pi_{1}^{i}(t)-\sigma^{i}(t,X_{t}^{u}%
,v_{t})+\sigma^{u,i}(t)\right)  ^{\intercal}P_{t}^{u}\left(  \Pi_{1}%
^{i}(t)-\sigma^{i}(t,X_{t}^{u},v_{t})+\sigma^{u,i}(t)\right)
\end{array}
\]
with
\[
\Pi_{2}(t)=\int_{0}^{1}\left[  b_{x}^{u}(t,X_{t}^{u}+\lambda(X_{t}^{v}%
-X_{t}^{u}),v_{t})-b_{x}^{u}(t)\right]  d\lambda\cdot(X_{t}^{v}-X_{t}%
^{u})+b(t,X_{t}^{u},v_{t})-b^{u}(t).
\]
From the definition of $\hat{\mathcal{H}}$, (\ref{eta-1}) can be rewritten
as
\begin{equation}%
\begin{array}
[c]{rl}%
\eta(t)= & \eta(T)+%
{\displaystyle\int_{t}^{T}}
\left\{  f_{y}^{u}(s)\eta(s)+\sum\limits_{i=1}^{d}f_{z_{i}}^{u}(s)\zeta
^{i}(s)+\mathcal{\hat{H}(}s\mathcal{)}+\phi_{s}\right\}  \\
& -\sum\limits_{i=1}^{d}%
{\displaystyle\int_{t}^{T}}
\zeta^{i}(s)dW_{s}^{i},
\end{array}
\label{eta-2}%
\end{equation}
where%
\[%
\begin{array}
[c]{rl}%
\phi_{t}= & \left(  G_{x}(t,\Theta_{t}^{u},p_{t}^{u},q_{t}^{u},v_{t}%
,u_{t})-G_{x}(t,\Theta_{t}^{u},p_{t}^{u},q_{t}^{u},u_{t},u_{t})\right)
^{\intercal}(X_{t}^{v}-X_{t}^{u})\\
& +\left[  f_{y}^{u,v}(t)-f_{y}^{u}(t)\right]  (Y_{t}^{v}-Y_{t}^{u})\\
& +\sum\limits_{i=1}^{d}\left[  f_{z_{i}}^{u,v}(t)-f_{z_{i}}^{u}(t)\right]
\cdot\left[  \left(  Z_{t}^{v}\right)  ^{i}-\left(  Z_{t}^{u}\right)
^{i}-\tilde{\Delta}^{i}(t,X_{t}^{u},p_{t}^{u},v_{t},u_{t})\right.  \\
& -\left.  \left(  p_{t}^{u}\right)  ^{\intercal}\left[  \sigma_{x}%
^{i}(t,X_{t}^{u},v_{t})-\sigma_{x}^{u,i}(t)\right]  (X_{t}%
^{v}-X_{t}^{u})\right]  \\
& +\sum\limits_{i=1}^{d}f_{z_{i}}^{u}(t)\left(  R_{1}^{i}(t)+\frac{1}{2}%
R_{2}^{i}(t)\right)  +R_{3}(t)+R_{4}(t)+R_{5}(t)+\frac{1}{2}R_{6}(t).
\end{array}
\]
Due to (\ref{new-P-W}), (\ref{eta-2}) can be further rewritten as%
\begin{equation}
\eta(t)=\eta(T)+\int_{t}^{T}\left[  f_{y}^{u}(s)\eta(s)+\mathcal{\hat{H}%
(}s\mathcal{)}+\phi_{s}\right]  ds-\sum\limits_{i=1}^{d}\int_{t}^{T}\zeta
^{i}(s)d\tilde{W}_{s}^{i}.\label{eta-3}%
\end{equation}
Applying It\^{o}'s formula to $\exp\left\{  \int_{0}^{\cdot}f_{y}%
^{u}(s)ds\right\}  \eta$ on $[t,T]$, we get%
\begin{equation}%
\begin{array}
[c]{rl}%
\eta(t)= & \exp\left\{  \int_{t}^{T}f_{y}^{u}(s)ds\right\}  \eta(T)+\int%
_{t}^{T}\exp\left\{  \int_{t}^{s}f_{y}^{u}(r)dr\right\}  \left(
\mathcal{\hat{H}(}s\mathcal{)}+\phi_{s}\right)  ds\\
& -\sum\limits_{i=1}^{d}\int_{t}^{T}\exp\left\{  \int_{t}^{s}f_{y}%
^{u}(r)dr\right\}  \zeta^{i}(s)d\tilde{W}_{s}^{i}.
\end{array}
\label{eta-explicit}%
\end{equation}
One can check $\mathbb{\tilde{E}}\left[  \left(  \int_{0}%
^{T}\left\vert \zeta(t)\right\vert ^{2}dt\right)  ^{\frac{1}{2}}\right]
<\infty$ so the stochastic integral in (\ref{eta-explicit}) is a true martingale
under $\mathbb{\tilde{P}}$. Then, by taking $\mathbb{\tilde{E}} \left [ \cdot \right ]$
on both sides of (\ref{eta-explicit}), we obtain
\begin{equation}%
\begin{array}
[c]{rl}%
\eta(0)= & J\left(  v(\cdot)\right)  -J\left(  u(\cdot)\right)  \\
= & \mathbb{\tilde{E}}\left[  \exp\left\{  \int_{0}^{T}f_{y}^{u}(t)dt\right\}
\eta(T)+\int_{0}^{T}\exp\left\{  \int_{0}^{t}f_{y}^{u}(s)ds\right\}  \left(
\mathcal{\hat{H}(}t\mathcal{)}+\phi_{t}\right)  dt\right]  \\
\leq & \exp\left\{  \left\Vert f_{y}\right\Vert _{\infty}T\right\}
\mathbb{\tilde{E}}\left[  \left\vert \eta(T)\right\vert +\int_{0}%
^{T}\left\vert \phi_{t}\right\vert dt\right]  \\
& +\exp\left\{  \left\Vert f_{y}\right\Vert _{\infty}T\right\}  \mathbb{\tilde
{E}}\left[  \int_{0}^{T}\mathcal{\hat{H}}^{+}\mathcal{(}t\mathcal{)}dt\right]
-\exp\left\{  -\left\Vert f_{y}\right\Vert _{\infty}T\right\}  \mathbb{\tilde
{E}}\left[  \int_{0}^{T}\mathcal{\hat{H}}^{-}\mathcal{(}t\mathcal{)}dt\right]
.
\end{array}
\label{Jv-Ju-origin}%
\end{equation}
Thus, in order to obtain (\ref{est-Jv-Ju}), we shall proceed to estimate $\mathbb{\tilde{E}%
}\left[  \left\vert \eta(T)\right\vert +\int_{0}^{T}\left\vert \phi
_{t}\right\vert dt\right]  $ as the following three parts.

\noindent \textbf{(i)} \textit{Estimate of}\textbf{ }$\mathbb{\tilde{E}}\left[  \left\vert \eta(T)\right\vert +\int_{0}^{T}\left(
\sum_{i=1}^{d}\left\vert f_{z_{i}}^{u}(t)\left(  R_{1}^{i}(t)+\frac{1}%
{2}R_{2}^{i}(t)\right)  \right\vert +\left\vert R_{5}(t)\right\vert \right)
dt\right]  $

Denote by
\[%
\begin{array}
[c]{l}%
\tilde{b}_{x}^{u,v}(t)=\int_{0}^{1}b_{x}(t,X_{t}^{u}+\lambda(X_{t}^{v}%
-X_{t}^{u}),v_{t})d\lambda;\\
\tilde{f}_{x}^{u,v}(t)=\int_{0}^{1}f_{x}(t,\Theta_{t}^{u}+\lambda(\Theta
_{t}^{v}-\Theta_{t}^{u}),v_{t})d\lambda.
\end{array}
\]
$\tilde{\sigma}_{x}^{u,v}(t)$, $\tilde{f}_{y}^{u,v}(t)$ and $\tilde{f}%
_{z}^{u,v}(t)$ are defined similarly. Since%
\[%
\begin{array}
[c]{rl}
& X_{t}^{v}-X_{t}^{u}\\
= &
{\displaystyle\int_{0}^{t}}
\left\{  \left(  \tilde{b}_{x}^{u,v}(s)+\sum\limits_{i=1}^{d}f_{z_{i}}%
^{u}(s)\left(  \tilde{\sigma}_{x}^{u,v}(s)\right)  ^{i}\right)  (X_{s}%
^{v}-X_{s}^{u})+b(s,X_{s}^{u},v_{s})-b^{u}(s)\right.  \\
& +\left.  \sum\limits_{i=1}^{d}f_{z_{i}}^{u}(s)\left[  \sigma^{i}(s,X_{s}%
^{u},v_{s})-\sigma^{u,i}(s)\right]  \right\}  ds\\
& +\sum\limits_{i=1}^{d}%
{\displaystyle\int_{0}^{t}}
\left\{  \left(  \tilde{\sigma}_{x}^{u,v}(s)\right)  ^{i}(X_{s}^{v}-X_{s}%
^{u})+\left[  \sigma^{i}(s,X_{s}^{u},v_{s})-\sigma^{u,i}(s)\right]  \right\}
d\tilde{W}_{s}^{i},
\end{array}
\]
by using a standard SDE estimate, we obtain%
\begin{align}
& \mathbb{\tilde{E}}\left[  \sup\limits_{t\in\lbrack0,T]}\left\vert X_{t}%
^{v}-X_{t}^{u}\right\vert ^{2}\right]  \label{est-Xv-Xu}\\
& \leq C\mathbb{\tilde{E}}\left[  \int_{0}^{T}\left\vert b(t,X_{t}^{u}%
,v_{t})-b(t,X_{t}^{u},u_{t})\right\vert ^{2}dt+\int_{0}^{T}\left\vert
\sigma(t,X_{t}^{u},v_{t})-\sigma(t,X_{t}^{u},u_{t})\right\vert ^{2}dt\right]
.\nonumber
\end{align}
Notice that, for $i=1,2,\ldots,d$,%
\[%
\begin{array}
[c]{rl}
& \left\vert \left(  X_{t}^{v}-X_{t}^{u}\right)  ^{\intercal}P_{t}^{u}\Pi
_{1}^{i}(t)\right\vert \\
\leq & \left\Vert P^{u}\right\Vert _{\infty}\left\vert X_{t}^{v}-X_{t}%
^{u}\right\vert \cdot\left(  2\left\Vert \sigma_{x}^{u,i}\right\Vert _{\infty
}\left\vert X_{t}^{v}-X_{t}^{u}\right\vert +\left\vert \sigma^{i}(t,X_{t}%
^{u},v_{t})-\sigma^{i}(t,X_{t}^{u},u_{t})\right\vert \right)  \\
\leq & \left\Vert P^{u}\right\Vert _{\infty}\left(  2\left\Vert \sigma_{x}%
^{u}\right\Vert _{\infty}+\frac{1}{2}\right)  \left\vert X_{t}^{v}-X_{t}%
^{u}\right\vert ^{2}+\frac{1}{2}\left\Vert P^{u}\right\Vert _{\infty
}\left\vert \sigma(t,X_{t}^{u},v_{t})-\sigma(t,X_{t}^{u},u_{t})\right\vert
^{2}.
\end{array}
\]
Therefore, applying Lemmas \ref{adj-1st-lem} and \ref{adj-2nd-lem}, (\ref{est-Xv-Xu}) together with the boundedness of $\Phi_{xx}$,
$b_{xx}$, $\sigma_{xx}$ and $f_{z}$ implies that
\[%
\begin{array}
[c]{rl}
& \mathbb{\tilde{E}}\left[  \left\vert \eta(T)\right\vert +\sum\limits_{i=1}%
^{d}\int_{0}^{T}\left\vert f_{z_{i}}^{u}(t)\left(  R_{1}^{i}(t)+\frac{1}%
{2}R_{2}^{i}(t)\right)  \right\vert dt+\int_{0}^{T}\left\vert R_{5}%
(t)\right\vert dt\right]  \\
\leq & C\mathbb{\tilde{E}}\left[  \int_{0}^{T}\left\vert b(t,X_{t}^{u}%
,v_{t})-b(t,X_{t}^{u},u_{t})\right\vert ^{2}dt+\int_{0}^{T}\left\vert
\sigma(t,X_{t}^{u},v_{t})-\sigma(t,X_{t}^{u},u_{t})\right\vert ^{2}dt\right].
\end{array}
\]

\noindent \textbf{(ii)} \textit{Estimate of}\textbf{ }$\mathbb{\tilde{E}}\left[
\int_{0}^{T} \left\vert R_{3}(t)\right\vert  dt\right]  $

By Lemma \ref{adj-1st-lem}, since $\sup_{u(\cdot)\in\mathcal{U}[0,T]}\left\Vert q_{{}}^{u}\right\Vert
_{\mathcal{K}}<\infty$, then, for $i=1,2,\ldots,d$ and $j=1,2,\ldots,n$,
we have $\left(  q_{{}}^{u}\right)  ^{ji}\cdot W^{i}\in \mathrm{BMO}$.
Furthermore, by Lemma \ref{lem-equiv-BMO-norm},
\begin{equation}
c_{1}\left\Vert \left(  q_{{}}^{u}\right)  ^{ji}\cdot W^{i}\right\Vert
_{\mathrm{BMO}}\leq\left\Vert \left(  q_{{}}^{u}\right)  ^{ji}\cdot\tilde{W}%
^{i}\right\Vert _{\mathrm{BMO}(\mathbb{\tilde{P}})}\leq c_{2}\left\Vert \left(  q_{{}%
}^{u}\right)  ^{ji}\cdot W^{i}\right\Vert _{\mathrm{BMO}},\label{equiv-measure-BMO}%
\end{equation}
where $c_{1}$ and $c_{2}$ are two constants depending only on $\left\Vert
f_{z}\right\Vert _{\infty}$ and $T$. Then it follows from Fefferman's inequality,
the estimate (\ref{est-Xv-Xu}) and the inequality (\ref{equiv-measure-BMO}) that
\[%
\begin{array}
[c]{l}%
\mathbb{\tilde{E}}\left[
{\displaystyle\int_{0}^{T}}
\left\vert R_{3}(t)\right\vert dt\right]  \\
\leq 2\left\Vert \sigma_{xx}\right\Vert _{\infty}\sum\limits_{i=1}^{d}%
\sum\limits_{j=1}^{n}\mathbb{\tilde{E}}\left[
{\displaystyle\int_{0}^{T}}
\left\vert \left(  q_{t}^{u}\right)  ^{ji}\right\vert \left\vert X_{t}%
^{v}-X_{t}^{u}\right\vert ^{2}dt\right]  \\
\leq 2\left\Vert \sigma_{xx}\right\Vert _{\infty}\sum\limits_{i=1}^{d}%
\sum\limits_{j=1}^{n}\mathbb{\tilde{E}}\left[
{\displaystyle\int_{0}^{T}}
\left\vert X_{t}^{v}-X_{t}^{u}\right\vert ^{2}\cdot\left\vert d\left\langle
\left\vert \left(  q_{t}^{u}\right)  ^{ji}\right\vert \cdot\tilde{W}%
^{i},\tilde{W}^{i}\right\rangle _{t}\right\vert \right]  \\
\leq 2\sqrt{2T}\left\Vert \sigma_{xx}\right\Vert _{\infty}\sum\limits_{i=1}%
^{d}\sum\limits_{j=1}^{n}\left\Vert \left(  q_{{}}^{u}\right)  ^{ji}%
\cdot\tilde{W}^{i}\right\Vert _{\mathrm{BMO}(\mathbb{\tilde{P}})}\mathbb{\tilde{E}%
}\left[  \sup\limits_{t\in\lbrack0,T]}\left\vert X_{t}^{v}-X_{t}%
^{u}\right\vert ^{2}\right]  \\
\leq 2\sqrt{2T}ndc_{2}\left\Vert \sigma_{xx}\right\Vert _{\infty}\left\Vert
q_{{}}^{u}\right\Vert _{\mathcal{K}}\mathbb{\tilde{E}}\left[  \sup
\limits_{t\in\lbrack0,T]}\left\vert X_{t}^{v}-X_{t}^{u}\right\vert
^{2}\right]  \\
\leq C\mathbb{\tilde{E}}\left[
{\displaystyle\int_{0}^{T}}
\left\vert b(t,X_{t}^{u},v_{t})-b(t,X_{t}^{u},u_{t})\right\vert ^{2}dt+%
{\displaystyle\int_{0}^{T}}
\left\vert \sigma(t,X_{t}^{u},v_{t})-\sigma(t,X_{t}^{u},u_{t})\right\vert
^{2}dt\right]  .
\end{array}
\]

\noindent \textbf{(iii)} \textit{Estimate of}\textbf{ }$\mathbb{\tilde{E}}\left[
\int_{0}^{T}\left\vert R_{4}(t)\right\vert dt\right]  $

In order to estimate $\mathbb{\tilde{E}}\left[  \int_{0}^{T}\left\vert
R_{4}(t)\right\vert dt\right]  $, we only need to estimate the following two terms:%
\begin{equation}%
\begin{array}
[c]{l}%
\mathbb{\tilde{E}}\left[  \int_{0}^{T}\left(  \int_{0}^{1}\int_{0}^{1}%
\lambda\left\vert f_{z_{i}z_{i}}(t,\Theta_{t}^{u,v}+\lambda\mu(\Theta_{t}%
^{v}-\Theta_{t}^{u,v}),v_{t})- f_{z_{i}z_{i}}^{u}(t)\right\vert d\mu d\lambda\right)  \right.  \\
\ \cdot\left.  \left\vert \left(  Z_{t}^{v}\right)  ^{i}-\left(  Z_{t}%
^{u}\right)  ^{i}-\tilde{\Delta}^{i}(t,X_{t}^{u},p_{t}^{u},v_{t}%
,u_{t})\right\vert ^{2}dt\right]
\end{array}
\label{est-fzz-Z-delata-square}%
\end{equation}
for any $i\in\left\{  1,2,\ldots d\right\}  $, and
\begin{equation}
\label{est-fzz-q-square-X-square}
\mathbb{\tilde{E}}\left[  \int_{0}^{T}\left\vert \sum\limits_{i,j=1}%
^{d}f_{z_{i}z_{j}}^{u}(t)\left(  X_{t}^{v}-X_{t}^{u}\right)  ^{\intercal}\left(
q_{t}^{u}\right)  ^{i}\left(  \left(  q_{t}^{u}\right)  ^{j}\right)
^{\intercal}(X_{t}^{v}-X_{t}^{u})\right\vert dt\right].
\end{equation}

On the one hand, since%
\[%
\begin{array}
[c]{rl}%
Y_{t}^{v}-Y_{t}^{u}= & \Phi(X_{T}^{v})-\Phi(X_{T}^{u})+\int_{t}^{T}\left\{
\left(  \tilde{f}_{x}^{u,v}(s)\right)  ^{\intercal}(X_{s}^{v}-X_{s}%
^{u})\right.  \\
& +\tilde{f}_{y}^{u,v}(s)(Y_{s}^{v}-Y_{s}^{u})+\left(  \tilde{f}_{z}%
^{u,v}(s)-f_{z}^{u}(s)\right)  ^{\intercal}(Z_{s}^{v}-Z_{s}^{u})\\
& -\left.  \left(  \tilde{f}_{z}^{u,v}(s)\right)  ^{\intercal}\tilde{\Delta
}(s,X_{s}^{u},p_{s}^{u},v_{s},u_{s})+f^{u,v}(s)-f^{u}(s)\right\}  ds\\
& -\int_{t}^{T}(Z_{s}^{v}-Z_{s}^{u})^{\intercal}d\tilde{W}_{s}^{i}
\end{array}
\]
and
\[%
\begin{array}
[c]{rl}
& \left\vert f^{u,v}(t)-f^{u}(t)\right\vert \\
= & \left\vert f(t,X_{t}^{u},Y_{t}^{u},Z_{t}^{u}+\tilde{\Delta}(t,X_{t}%
^{u},p_{t}^{u},v_{t},u_{t}),v_{t})-f(t,X_{t}^{u},Y_{t}^{u},Z_{t}^{u}%
,u_{t})\right\vert \\
\leq & \left\Vert f_{z}\right\Vert _{\infty}\left\Vert p^{u}\right\Vert
_{\infty}\left\vert \sigma(t,X_{t}^{u},v_{t})-\sigma(t,X_{t}^{u}%
,u_{t})\right\vert \\
& +\left\vert f(t,X_{t}^{u},Y_{t}^{u},Z_{t}^{u},v_{t})-f(t,X_{t}^{u},Y_{t}%
^{u},Z_{t}^{u},u_{t})\right\vert ,
\end{array}
\]
by (\ref{est-Xv-Xu}) and using a standard BSDE estimate, we get
\begin{equation}%
\begin{array}
[c]{l}%
\mathbb{\tilde{E}}\left[  \sup\limits_{t\in\lbrack0,T]}\left\vert Y_{t}%
^{v}-Y_{t}^{u}\right\vert ^{2}+\int_{0}^{T}\left\vert Z_{t}^{v}-Z_{t}%
^{u}\right\vert ^{2}dt\right]  \\
\leq C\mathbb{\tilde{E}}\left[  \int_{0}^{T}\left\vert b(t,X_{t}^{u}%
,v_{t})-b(t,X_{t}^{u},u_{t})\right\vert ^{2}dt+\int_{0}^{T}\left\vert
\sigma(t,X_{t}^{u},v_{t})-\sigma(t,X_{t}^{u},u_{t})\right\vert ^{2}dt\right.
\\
\ \ +\left.  \int_{0}^{T}\left\vert f(t,X_{t}^{u},Y_{t}^{u},Z_{t}^{u}%
,v_{t})-f(t,X_{t}^{u},Y_{t}^{u},Z_{t}^{u},u_{t})\right\vert ^{2}dt\right]  .
\end{array}
\label{est-YZv-YZu}%
\end{equation}
Thus, by Lemma \ref{adj-1st-lem}, the estimate (\ref{est-YZv-YZu}) and $\left\Vert D^{2}f\right\Vert _{\infty
}<\infty$, (\ref{est-fzz-Z-delata-square}) can be
dominated by%
\[%
\begin{array}
[c]{l}%
2\left\Vert D^{2}f\right\Vert _{\infty}\mathbb{\tilde{E}}\left[  \int_{0}%
^{T}\left\vert \left(  Z_{t}^{v}\right)  ^{i}-\left(  Z_{t}^{u}\right)
^{i}-\tilde{\Delta}^{i}(t,X_{t}^{u},p_{t}^{u},v_{t},u_{t})\right\vert
^{2}dt\right]  \\
\leq C\mathbb{\tilde{E}}\left[  \int_{0}^{T}\left\vert Z_{t}^{v}-Z_{t}%
^{u}\right\vert ^{2}dt+\int_{0}^{T}\left\vert \tilde{\Delta}(t,X_{t}^{u}%
,p_{t}^{u},v_{t},u_{t})\right\vert ^{2}dt\right]  \\
\leq C\mathbb{\tilde{E}}\left[  \int_{0}^{T}\left\vert b(t,X_{t}^{u}%
,v_{t})-b(t,X_{t}^{u},u_{t})\right\vert ^{2}dt+\int_{0}^{T}\left\vert
\sigma(t,X_{t}^{u},v_{t})-\sigma(t,X_{t}^{u},u_{t})\right\vert ^{2}dt\right.
\\
\ \ +\left.  \int_{0}^{T}\left\vert f(t,X_{t}^{u},Y_{t}^{u},Z_{t}^{u}%
,v_{t})-f(t,X_{t}^{u},Y_{t}^{u},Z_{t}^{u},u_{t})\right\vert ^{2}dt\right]  .
\end{array}
\]

As for (\ref{est-fzz-q-square-X-square}), we first note that
\[%
\begin{array}
[c]{rl}
& \int_{0}^{T}\left\vert \sum\limits_{i,j=1}^{d}f_{z_{i}z_{j}}(t)\left(
X_{t}^{v}-X_{t}^{u}\right)  ^{\intercal}\left(  q_{t}^{u}\right)  ^{i}\left(
\left(  q_{t}^{u}\right)  ^{j}\right)  ^{\intercal}(X_{t}^{v}-X_{t}%
^{u})\right\vert dt\\
\leq & \left\Vert f_{zz}\right\Vert _{\infty}\sum\limits_{i,j=1}^{d}\int%
_{0}^{T}\left\vert \left(  X_{t}^{v}-X_{t}^{u}\right)  ^{\intercal}\left(
q_{t}^{u}\right)  ^{i}\right\vert \cdot\left\vert \left(  \left(  q_{t}%
^{u}\right)  ^{j}\right)  ^{\intercal}(X_{t}^{v}-X_{t}^{u})\right\vert dt\\
\leq & \frac{d}{2}\left\Vert f_{zz}\right\Vert _{\infty}\left(  \sum
\limits_{i=1}^{d}\int_{0}^{T}\left\vert \left(  X_{t}^{v}-X_{t}^{u}\right)
^{\intercal}\left(  q_{t}^{u}\right)  ^{i}\right\vert ^{2}dt+\sum
\limits_{j=1}^{d}\int_{0}^{T}\left\vert \left(  X_{t}^{v}-X_{t}^{u}\right)
^{\intercal}\left(  q_{t}^{u}\right)  ^{j}\right\vert ^{2}dt\right)  \\
\leq & nd\left\Vert f_{zz}\right\Vert _{\infty}\sum\limits_{i=1}^{d}%
\sum\limits_{k=1}^{n}\int_{0}^{T}\left\vert \left(  X_{t}^{v}-X_{t}%
^{u}\right)  ^{k}\left(  q_{t}^{u}\right)  ^{ki}\right\vert ^{2}dt,
\end{array}
\]
where the second inequality comes from $\left \vert ab \right \vert \leq \frac{1}{2} \left( a^{2} + b^{2} \right)$,
and the last inequality is due to $\left\vert \sum_{k=1}^{n}a_{k}\right\vert ^{2}\leq n\sum_{k=1}^{n}\left\vert a_{k}\right\vert ^{2}$.
Then, by Proposition \ref{prop-Hp-BMO} and (\ref{equiv-measure-BMO}),
we have
\[%
\begin{array}
[c]{rl}
& \mathbb{\tilde{E}}\left[  \int_{0}^{T}\left\vert \sum\limits_{i,j=1}%
^{d}f_{z_{i}z_{j}}(t)\left(  X_{t}^{v}-X_{t}^{u}\right)  ^{\intercal}\left(
q_{t}^{u}\right)  ^{i}\left(  \left(  q_{t}^{u}\right)  ^{j}\right)
^{\intercal}(X_{t}^{v}-X_{t}^{u})\right\vert dt\right]  \\
\leq & nd\left\Vert f_{zz}\right\Vert _{\infty}\sum\limits_{i=1}^{d}%
\sum\limits_{k=1}^{n}\mathbb{\tilde{E}}\left[  \left\langle \left(  X_{{}}%
^{v}-X_{{}}^{u}\right)  ^{k}\cdot\left(  \left(  q_{{}}^{u}\right)  ^{ki}\cdot
W^{i}\right)  \right\rangle _{T}\right]  \\
\leq & 2nd\left\Vert f_{zz}\right\Vert _{\infty}\sum\limits_{i=1}^{d}%
\sum\limits_{k=1}^{n}\left\Vert \left(  q_{{}}^{u}\right)  ^{ji}\cdot\tilde
{W}^{i}\right\Vert _{\mathrm{BMO}(\mathbb{\tilde{P}})}^{2}\mathbb{\tilde{E}}\left[
\sup\limits_{t\in\lbrack0,T]}\left\vert \left(  X_{t}^{v}-X_{t}^{u}\right)
^{j}\right\vert ^{2}\right]  \\
\leq & 2c_{2}n^{2}d^{2}\left\Vert f_{zz}\right\Vert _{\infty}\left\Vert
q^{u}\right\Vert _{\kappa}^{2}\mathbb{\tilde{E}}\left[  \sup\limits_{t\in
\lbrack0,T]}\left\vert X_{t}^{v}-X_{t}^{u}\right\vert ^{2}\right]  \\
\leq & C\mathbb{\tilde{E}}\left[  \int_{0}^{T}\left\vert b(t,X_{t}^{u}%
,v_{t})-b(t,X_{t}^{u},u_{t})\right\vert ^{2}dt+\int_{0}^{T}\left\vert
\sigma(t,X_{t}^{u},v_{t})-\sigma(t,X_{t}^{u},u_{t})\right\vert ^{2}dt\right].
\end{array}
\]
The estimates for other terms in $\mathbb{\tilde{E}}\left[  \int_{0}%
^{T}\left\vert R_{4}(t)\right\vert dt\right]  $ are similar to either
(\ref{est-fzz-Z-delata-square}) or (\ref{est-fzz-q-square-X-square}). Hence, we obtain%
\[%
\begin{array}
[c]{ll}%
\mathbb{\tilde{E}}\left[  \int_{0}^{T}\left\vert R_{4}(t)\right\vert
dt\right]  \leq & C\mathbb{\tilde{E}}\left[  \int_{0}^{T}\left\vert
b(t,X_{t}^{u},v_{t})-b(t,X_{t}^{u},u_{t})\right\vert ^{2}dt\right.  \\
& +\int_{0}^{T}\left\vert \sigma(t,X_{t}^{u},v_{t})-\sigma(t,X_{t}^{u}%
,u_{t})\right\vert ^{2}dt\\
& +\left.  \int_{0}^{T}\left\vert f(t,X_{t}^{u},Y_{t}^{u},Z_{t}^{u}%
,v_{t})-f(t,X_{t}^{u},Y_{t}^{u},Z_{t}^{u},u_{t})\right\vert ^{2}dt\right]  .
\end{array}
\]

$R_{6}(\cdot)$ and other remained terms in $\phi$ can be estimated as in the way of dealing with the above three parts.
Consequently, we have%
\begin{equation}%
\begin{array}
[c]{l}%
\mathbb{\tilde{E}}\left[  \left\vert \eta(T)\right\vert +\int_{0}%
^{T}\left\vert \phi_{t}\right\vert dt\right]  \\
\leq C\left\{  \sum\limits_{\psi\in\{b,\sigma\}}\mathbb{\tilde{E}}\left[
\int_{0}^{T}\left\vert \psi(t,X_{t}^{u},v_{t})-\psi^{u}(t)\right\vert
^{2}dt\right]  +\mathbb{\tilde{E}}\left[  \int_{0}^{T}\left\vert
f(t,\Theta_{t}^{u},v_{t})-f^{u}(t)\right\vert ^{2}dt\right]  \right.  \\
\ \ +\left.  \sum\limits_{w\in\{x,y,z\}}\mathbb{\tilde{E}}\left[  \int_{0}%
^{T}\left\vert G_{w}(t,\Theta_{t}^{u},p_{t}^{u},q_{t}^{u},v_{t},u_{t}%
)-G_{w}(t,\Theta_{t}^{u},p_{t}^{u},q_{t}^{u},u_{t},u_{t})\right\vert
^{2}dt\right]  \right\}  .
\end{array}
\label{est-phi}%
\end{equation}
Combining (\ref{est-phi}) with (\ref{Jv-Ju-origin}), we finally obtain (\ref{est-Jv-Ju}), which completes the proof.
\end{proof}

For any integer $m$, recall the returned control $u^{m-1}(\cdot)$ at the $m$th iteration in Algorithm \ref{algorithm-MSA},
the corresponding state trajectory $\Theta^{m} = \left(  X^{m},Y^{m},Z^{m}\right)$, the first and second-order adjoint processes $\left( p^{m},q^{m}  \right)$, $P^{m}$
and other notations defined in (\ref{def-notation}).
Define a new probability $\mathbb{P}^{m}$ by $d\mathbb{P}^{m}:=\Xi_{T}^{m}  d\mathbb{P}$,
where $\Xi_{t}^{m}:=\mathcal{E}\left(  \sum_{i=1}^{d}\int%
_{0}^{t}f_{z_{i}}^{u^{m-1}}(s)dW_{s}^{i}\right) , $
and denote by $\mathbb{E}^{m}\left[  \cdot\right]  $ the mathematical expectation with respect to $\mathbb{P}^{m}$.
Set
\[
\mathcal{\hat{H}}_{m}\mathcal{(}t\mathcal{)}=\mathcal{H}(t,\Theta_{t}%
^{m},p_{t}^{m},q_{t}^{m},P_{t}^{m},u_{t}^{m},u_{t}^{m-1})-\mathcal{H}%
(t,\Theta_{t}^{m},p_{t}^{m},q_{t}^{m},P_{t}^{m},u_{t}^{m-1},u_{t}^{m-1})
\]
and $\mu_{m}=\mathbb{E}^{m}\left[  \int_{0}^{T}\mathcal{\hat{H}}_{m}\mathcal{(}%
t\mathcal{)}dt\right]  $. Now we state the main result of the paper.

\begin{theorem}
\label{thm-convergence} Let Assumption \ref{assum-1} hold.
Then, for $\rho>2C e^{  \left\Vert f_{y} \right\Vert _{\infty} T  } $,
the sequence $\{J(u^{m}(\cdot))\}_{m}$ obtained by Algorithm \ref{algorithm-MSA} converges to a local minimum of (\ref{state-eq})-(\ref{cost-func}) and
$\lim_{m \rightarrow \infty} \mu_{m} = 0$, where $C$ is the constant determined by the estimate (\ref{est-Jv-Ju}).
Moreover, recalling the $\epsilon$ introduced in Algorithm \ref{algorithm-MSA},
then there exists the smallest positive integer $m_{\epsilon}$ depending on $\epsilon$ such that, for all $m\geq m_{\epsilon}$ and all $u(\cdot) \in \mathcal{U}[0,T]$,
\begin{equation}%
\begin{array}
[c]{rl}
& \mathbb{E}^{m}\left[  \int_{0}^{T}\mathcal{\tilde{H}}(t,\Theta_{t}^{m}%
,p_{t}^{m},q_{t}^{m},P_{t}^{m},u_{t},u_{t}^{m-1})\right.  \\
& -\left.  \mathcal{\tilde{H}}(t,\Theta_{t}^{m},p_{t}^{m},q_{t}^{m},P_{t}%
^{m},u_{t}^{m-1},u_{t}^{m-1})dt\right]  \\
\geq & -\left(  e^{-\left\Vert f_{y}\right\Vert _{\infty}T}
-\frac{2C}{\rho}\right)  ^{-1}\epsilon.
\end{array}
\label{delta-H-minimum}%
\end{equation}
\end{theorem}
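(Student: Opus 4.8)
The plan is to run the error estimate (\ref{est-Jv-Ju}) along two consecutive iterates of Algorithm \ref{algorithm-MSA} and to exploit the fact that the quadratic penalty defining the augmented Hamiltonian $\tilde{\mathcal{H}}$ in (\ref{def-general-Hamiltonian}) has exactly the same structure as the correction term carrying the constant $C$ in (\ref{est-Jv-Ju}). First I would apply Lemma \ref{lem-costfunc-dominate} with $u(\cdot)=u^{m-1}(\cdot)$ and $v(\cdot)=u^{m}(\cdot)$: then $\tilde{\mathbb{P}}$ is precisely $\mathbb{P}^{m}$, the base trajectory is $\Theta^{m}$, the adjoints are $(p^{m},q^{m},P^{m})$, and $\hat{\mathcal{H}}(\cdot)$ becomes $\hat{\mathcal{H}}_{m}(\cdot)$. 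Writing $\mathcal{P}_{m}(t)$ for the curly-bracket penalty of (\ref{def-general-Hamiltonian}) evaluated at $(u^{m}_{t},u^{m-1}_{t})$ along $\Theta^{m}$, the decisive observation is that $u^{m}_{t}$ minimizes $u\mapsto\tilde{\mathcal{H}}(t,\Theta^{m}_{t},p^{m}_{t},q^{m}_{t},P^{m}_{t},u,u^{m-1}_{t})$ while the penalty vanishes when the two control slots agree; hence $\tilde{\mathcal{H}}(\ldots,u^{m}_{t},u^{m-1}_{t})\leq\tilde{\mathcal{H}}(\ldots,u^{m-1}_{t},u^{m-1}_{t})=\mathcal{H}(\ldots,u^{m-1}_{t},u^{m-1}_{t})$, which rearranges to $\hat{\mathcal{H}}_{m}(t)\leq-\tfrac{\rho}{2}\mathcal{P}_{m}(t)\leq0$, $dt\otimes d\mathbb{P}$-a.e. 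In particular $\hat{\mathcal{H}}_{m}^{+}\equiv0$; this is exactly what the augmentation buys, and it is what kills the dangerous factor $e^{\|f_{y}\|_{\infty}T}$ in (\ref{est-Jv-Ju}).

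With $\hat{\mathcal{H}}_{m}^{+}\equiv0$ the estimate collapses to $J(u^{m}(\cdot))-J(u^{m-1}(\cdot))\leq e^{-\|f_{y}\|_{\infty}T}\mu_{m}+C\,\mathbb{E}^{m}[\int_{0}^{T}\mathcal{P}_{m}(t)\,dt]$. Integrating the pointwise bound $\tfrac{\rho}{2}\mathcal{P}_{m}(t)\leq-\hat{\mathcal{H}}_{m}(t)$ gives $\mathbb{E}^{m}[\int_{0}^{T}\mathcal{P}_{m}\,dt]\leq-\tfrac{2}{\rho}\mu_{m}$, so that $J(u^{m}(\cdot))-J(u^{m-1}(\cdot))\leq(e^{-\|f_{y}\|_{\infty}T}-\tfrac{2C}{\rho})\mu_{m}=:\kappa\mu_{m}$. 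The hypothesis $\rho>2Ce^{\|f_{y}\|_{\infty}T}$ is precisely what makes $\kappa>0$, and together with $\mu_{m}\leq0$ it yields the monotone descent $J(u^{m}(\cdot))\leq J(u^{m-1}(\cdot))$. Since $\inf_{u(\cdot)}J>-\infty$, the sequence $\{J(u^{m}(\cdot))\}_{m}$ is nonincreasing and bounded below, hence convergent to the asserted local minimum. Telescoping the descent inequality over $m$ gives $\kappa\sum_{m}|\mu_{m}|\leq J(u^{0}(\cdot))-\inf_{u(\cdot)}J<\infty$, so $\sum_{m}|\mu_{m}|<\infty$ and therefore $\mu_{m}\to0$.

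For the approximate minimality (\ref{delta-H-minimum}) I would use the minimizing property once more, now comparing an arbitrary admissible $u(\cdot)$ against the minimizer $u^{m}_{t}$: for a.e. $t$, $\tilde{\mathcal{H}}(\ldots,u_{t},u^{m-1}_{t})\geq\tilde{\mathcal{H}}(\ldots,u^{m}_{t},u^{m-1}_{t})$, whence $\tilde{\mathcal{H}}(\ldots,u_{t},u^{m-1}_{t})-\tilde{\mathcal{H}}(\ldots,u^{m-1}_{t},u^{m-1}_{t})\geq\hat{\mathcal{H}}_{m}(t)+\tfrac{\rho}{2}\mathcal{P}_{m}(t)$. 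Applying $\mathbb{E}^{m}[\int_{0}^{T}\cdot\,dt]$ and discarding the nonnegative penalty contribution bounds the left-hand side below by $\mu_{m}$. Finally, convergence of $\{J(u^{m}(\cdot))\}_{m}$ forces $J(u^{m-1}(\cdot))-J(u^{m}(\cdot))\to0$, while the descent inequality gives $|\mu_{m}|\leq\kappa^{-1}[J(u^{m-1}(\cdot))-J(u^{m}(\cdot))]$; taking $m_{\epsilon}$ to be the smallest index beyond which this gap never exceeds $\epsilon$ yields $\mu_{m}\geq-\kappa^{-1}\epsilon$ for all $m\geq m_{\epsilon}$, which is exactly (\ref{delta-H-minimum}).

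I expect the main obstacle to be conceptual calibration rather than new analysis: one must recognize that the penalty weight $\rho/2$ in $\tilde{\mathcal{H}}$ has to dominate the very constant $C$ produced by the error estimate (after the $e^{\pm\|f_{y}\|_{\infty}T}$ factors are accounted for), and one must track the sign bookkeeping that forces $\hat{\mathcal{H}}_{m}^{+}\equiv0$ — it is the minimization of $\tilde{\mathcal{H}}$, and not of $\mathcal{H}$, that guarantees this. The genuinely hard analytic work, namely the BMO-based control of the unbounded $q^{u}$ and the inherited BMO property under the measure change to $\mathbb{P}^{m}$, is already encapsulated in Lemma \ref{lem-costfunc-dominate}, so here it only remains to invoke it consistently with the correct base control $u^{m-1}(\cdot)$.
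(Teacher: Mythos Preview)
Your proposal is correct and follows essentially the same approach as the paper's proof: apply Lemma \ref{lem-costfunc-dominate} with $u(\cdot)=u^{m-1}(\cdot)$, $v(\cdot)=u^{m}(\cdot)$, exploit the minimization of $\tilde{\mathcal{H}}$ to force $\hat{\mathcal{H}}_{m}^{+}\equiv 0$ and to bound the penalty by $-\tfrac{2}{\rho}\mu_{m}$, obtain the descent inequality $J(u^{m})-J(u^{m-1})\leq\kappa\mu_{m}$ with $\kappa=e^{-\|f_{y}\|_{\infty}T}-\tfrac{2C}{\rho}>0$, telescope to get $\sum_{m}|\mu_{m}|<\infty$, and deduce (\ref{delta-H-minimum}) from $|\mu_{m}|\leq\kappa^{-1}[J(u^{m-1})-J(u^{m})]$ combined with the minimizing property of $u^{m}$. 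Your explicit introduction of $\mathcal{P}_{m}(t)$ and the pointwise inequality $\hat{\mathcal{H}}_{m}(t)\leq-\tfrac{\rho}{2}\mathcal{P}_{m}(t)$ makes transparent a step the paper compresses into the phrase ``by the definition of $\tilde{\mathcal{H}}$,'' but the logical structure is identical.
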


\begin{proof}[\textbf{Proof}]
At first, from the updating step (\ref{def-u-n-step}) in Algorithm \ref{algorithm-MSA}, we get $\mathcal{\hat{H}}_{m}%
\mathcal{(}t\mathcal{)}\leq0$, $dt \otimes d\mathbb{P}$-$a.e.$,
so $\mu_{m}\leq0$. By Lemma \ref{lem-costfunc-dominate}, letting $v(\cdot)=u^{m}(\cdot)$, $u(\cdot)=u^{m-1}(\cdot)$
and noting that $\mathcal{\hat{H}}_{m}^{+}(\cdot)=0$, then we have%
\begin{equation}%
\begin{array}
[c]{l}%
J\left(  u^{m}(\cdot)\right)  -J\left(  u^{m-1}(\cdot)\right)  \\
\leq e^{-\left\Vert f_{y}\right\Vert _{\infty}T}\mathbb{E}^{m}\left[  \int%
_{0}^{T}\mathcal{\hat{H}}_{m}\mathcal{(}t\mathcal{)}dt\right]  +\mathbb{E}%
^{m}\left[  \int_{0}^{T}\left\vert f(t,\Theta_{t}^{m},u_{t}^{m})-f(t,\Theta
_{t}^{m},u_{t}^{m-1})\right\vert ^{2}dt\right]  \\
\ \ +\sum\limits_{\psi\in\{b,\sigma\}}\mathbb{E}^{m}\left[  \int_{0}%
^{T}\left\vert \psi(t,X_{t}^{m},u_{t}^{m})-\psi(t,X_{t}^{m},u_{t}%
^{m-1})\right\vert ^{2}dt\right]  \\
\ \ +\ C\left\{  \sum\limits_{w\in\{x,y,z\}}\mathbb{E}^{m}\left[  \int_{0}%
^{T}\left\vert G_{w}(t,\Theta_{t}^{m},p_{t}^{m},q_{t}^{m},u_{t}^{m}%
,u_{t}^{m-1})\right.  \right.  \right.  \\
\ \ \ \ \ \ \ \ \ \ \ \ \ \ \ \ \ \ \ \ \ \ \ \ \ \ \ \ \ \ \ \left.  \left.
-\left.  G_{w}(t,\Theta_{t}^{m},p_{t}^{m},q_{t}^{m},u_{t}^{m-1},u_{t}%
^{m-1})\right\vert ^{2}dt\right]  \right\}
\end{array}
\label{nstep-n-1step}%
\end{equation}
for some universal constant $C>0$ depending on $n$, $d$, $T$,
$\left\Vert b_{x}\right\Vert _{\infty}$, $\left\Vert \sigma_{x}\right\Vert
_{\infty}$, $\left\Vert \Phi_{x}\right\Vert _{\infty}$, $\left\Vert b_{xx}\right\Vert _{\infty}$, $\left\Vert \sigma_{xx}\right\Vert
_{\infty}$, $\left\Vert \Phi_{xx}\right\Vert _{\infty}$,
$\left\Vert
Df\right\Vert _{\infty}$ and $\left\Vert D^{2}f\right\Vert _{\infty}$.
Hence, by choosing $\rho>2C e^{\left\Vert f_{y} \right\Vert _{\infty} T}$
and the definition of $\mathcal{\tilde{H}}$, (\ref{nstep-n-1step}) implies
\begin{equation}
\label{Jum-Jum-1-decrease}
J\left(  u^{m}(\cdot)\right)  -J\left(  u^{m-1}(\cdot)\right)  \leq\left(
e^{-\left\Vert f_{y}\right\Vert _{\infty}T} - \frac{2C}{\rho}\right)  \mu_{m}\leq 0 .
\end{equation}
Consequently, for any integer $l \geq 1$, we have%
\[%
\begin{array}
[c]{rl}%
\sum\limits_{m=1}^{l}\left(  -\mu_{m}\right)  \leq & \left(  e^{-\left\Vert f_{y}\right\Vert _{\infty}T}  -\dfrac{2C}{\rho}\right)
^{-1}\sum\limits_{m=1}^{l}\left[  J\left(  u^{m-1}(\cdot)\right)  -J\left(
u^{m}(\cdot)\right)  \right]  \\
= & \left(  e^{-\left\Vert f_{y}\right\Vert _{\infty}T}
-\dfrac{2C}{\rho}\right)  ^{-1}\left[  J\left(  u^{0}(\cdot)\right)  -J\left(
u^{l}(\cdot)\right)  \right]  \\
\leq & \left(  e^{-\left\Vert f_{y}\right\Vert _{\infty}T}
-\dfrac{2C}{\rho}\right)  ^{-1}\left[  J\left(  u^{0}(\cdot)\right)
-\inf\limits_{u(\cdot)\in\mathcal{U}[0,T]}J\left(  u(\cdot)\right)  \right]
\\
< & \infty,
\end{array}
\]
which implies that $\sum\limits_{m=1}^{\infty}\left(  -\mu_{m}\right)  < \infty$.
Since $-\mu_{m}\geq0$, we obtain that $\mu_{m}\rightarrow0$ as $m\rightarrow \infty$.

As for the last claim (\ref{delta-H-minimum}), since $\inf_{u(\cdot)\in\mathcal{U}[0,T]}J\left(  u(\cdot)\right) > -\infty$, for any sufficiently small $\epsilon >0$,
$J(u^{m-1}(\cdot))-J(u^{m}(\cdot)) < \epsilon$ as long as $m$ is large enough.
Denote by $m_{\epsilon}$ the smallest positive integer such that, for all $m\geq m_{\epsilon}$, $J(u^{m-1}(\cdot))-J(u^{m}(\cdot)) < \epsilon$.
Then it follows from (\ref{def-general-Hamiltonian}) and (\ref{Jum-Jum-1-decrease}) that, for all $m\geq m_{\epsilon}$ all $u(\cdot) \in \mathcal{U}[0,T]$,
\[%
\begin{array}
[c]{rl}
& \mathbb{E}^{m}\left[  \int_{0}^{T}\mathcal{\tilde{H}}(t,\Theta_{t}^{m}%
,p_{t}^{m},q_{t}^{m},P_{t}^{m},u_{t},u_{t}^{m-1})\right.  \\
& -\left.  \mathcal{\tilde{H}}(t,\Theta_{t}^{m},p_{t}^{m},q_{t}^{m},P_{t}%
^{m},u_{t}^{m-1},u_{t}^{m-1})dt\right]  \\
\geq & \mathbb{E}^{m}\left[  \int_{0}^{T}\mathcal{\tilde{H}}(t,\Theta_{t}%
^{m},p_{t}^{m},q_{t}^{m},P_{t}^{m},u_{t}^{m},u_{t}^{m-1})\right.  \\
& -\left.  \mathcal{\tilde{H}}(t,\Theta_{t}^{m},p_{t}^{m},q_{t}^{m},P_{t}%
^{m},u_{t}^{m-1},u_{t}^{m-1})dt\right]  \\
\geq & \mu_{m}\\
\geq & -\left(  e^{-\left\Vert f_{y}\right\Vert _{\infty}T}
-\frac{2C}{\rho}\right)  ^{-1}\epsilon,
\end{array}
\]
which completes the proof.
\end{proof}

\begin{remark}
In Theorem \ref{thm-suffi-cond} below, we will prove that (\ref{delta-H-minimum}) is sufficient for making
$u^{m_{\epsilon}-1}(\cdot)$ nearly minimize $J$ with an order of $\epsilon^{\frac{1}{2}}$ in a special case.
\end{remark}

\begin{corollary}
\label{cor-1} Suppose Assumption \ref{assum-1} holds, and $f$ is
independent of $y,z$. We further assume that the following local maximum
principle holds:%
\begin{equation}
G(t,\bar{X}_{t},\bar{p}_{t},\bar{q}_{t},\bar{u}_{t})\leq G(t,\bar{X}_{t}%
,\bar{p}_{t},\bar{q}_{t},u), \ \text{\ \ }\forall u\in U,\text{ }%
dt \otimes d\mathbb{P}\text{-a.e.}. \label{local-SMP}%
\end{equation}
Then the estimate (\ref{est-Jv-Ju})\ becomes%
\begin{equation}%
\begin{array}
[c]{l}%
J\left(  v(\cdot)\right)  -J\left(  u(\cdot)\right)  \\
\leq\mathbb{E}\left[  \int_{0}^{T}\left[  G(t,X_{t}^{u},p_{t}^{u},q_{t}%
^{u},v_{t})-G(t,X_{t}^{u},p_{t}^{u},q_{t}^{u},u_{t})\right]  dt\right]  \\
\ \ +C\left\{  \sum\limits_{\psi\in\{b,\sigma\}}\mathbb{E}\left[  \int_{0}%
^{T}\left\vert \psi(t,X_{t}^{u},v_{t})-\psi(t,X_{t}^{u},u_{t})\right\vert
^{2}dt\right]  \right.  \\
\ \ +\left.  \mathbb{E}\left[  \int_{0}^{T}\left\vert G_{x}(t,X_{t}^{u}%
,p_{t}^{u},q_{t}^{u},v_{t})-G_{x}(t,X_{t}^{u},p_{t}^{u},q_{t}^{u}%
,u_{t})\right\vert ^{2}dt\right]  \right\}  .
\end{array}
\label{est-f-independent-yz}%
\end{equation}
Furthermore, the sequence $\{J(u^{m}(\cdot))\}_{m}$ obtained by Algorithm \ref{algorithm-MSA} converges to a local minimum of (\ref{state-eq})-(\ref{cost-func}) for $\rho>2C$.
\end{corollary}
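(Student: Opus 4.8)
The plan is to specialise the derivation of Lemma \ref{lem-costfunc-dominate} to the present setting and then to repeat the contraction argument of Theorem \ref{thm-convergence} with the sharpened estimate. The starting point is that $f$ being independent of $y,z$ forces $f_{y}\equiv0$ and $f_{z}\equiv0$. Hence the Girsanov density in (\ref{new-P-W}) reduces to $\mathcal{E}(0)=1$, so that $\mathbb{\tilde{P}}=\mathbb{P}$, $\mathbb{\tilde{E}}=\mathbb{E}$ and $\tilde{W}=W$; moreover $\left\Vert f_{y}\right\Vert _{\infty}=0$, whence both prefactors $e^{\pm\left\Vert f_{y}\right\Vert _{\infty}T}$ in (\ref{est-Jv-Ju}) equal $1$ and the two terms involving $\mathcal{\hat{H}}^{+}$ and $\mathcal{\hat{H}}^{-}$ recombine into $\mathbb{E}[\int_{0}^{T}\mathcal{\hat{H}}(t)\,dt]$. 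Since $G$ is now independent of its $y$, $z$ and final arguments, one has $G_{y}=G_{z}=0$, so the sum over $w\in\{x,y,z\}$ collapses to the single term $w=x$.

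The delicate point, and the step I expect to be the main obstacle, is to show that the summand $\mathbb{\tilde{E}}[\int_{0}^{T}|f(t,\Theta_{t}^{u},v_{t})-f(t,\Theta_{t}^{u},u_{t})|^{2}dt]$ appearing in (\ref{est-Jv-Ju}) is genuinely absent from (\ref{est-f-independent-yz}); it cannot simply be discarded, since deleting a nonnegative summand from an upper bound only weakens it. I would therefore re-run the proof of Lemma \ref{lem-costfunc-dominate}. Because $f_{y}=f_{z}=0$, every contribution to $\phi$ carrying a factor $f_{y}^{u,v}-f_{y}^{u}$, $f_{z_{i}}^{u,v}-f_{z_{i}}^{u}$ or $f_{z_{i}}^{u}$ vanishes, and $D^{2}f$ collapses to its $f_{xx}$ block. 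In particular the remainder $R_{4}$ reduces to $\frac{1}{2}(X_{t}^{v}-X_{t}^{u})^{\intercal}[\tilde{D}^{2}f^{u,v}(t)-D^{2}f^{u}(t)](X_{t}^{v}-X_{t}^{u})$ acting on the $x$-block alone (the second quadratic form of $R_{4}$ vanishes, its $x$-entry being $\mathbf{0}_{n\times1}$), which is controlled by $C|X_{t}^{v}-X_{t}^{u}|^{2}$ and hence, through (\ref{est-Xv-Xu}), by the $b$- and $\sigma$-square terms; the terms (\ref{est-fzz-Z-delata-square}) and (\ref{est-fzz-q-square-X-square}) vanish outright as $f_{zz}=0$. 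Thus the BSDE estimate (\ref{est-YZv-YZu})---the sole source of the $f$-square term---is never invoked, and no such term arises.

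It remains to reshape the leading part. Writing $\mathcal{\hat{H}}(t)=[G(t,X_{t}^{u},p_{t}^{u},q_{t}^{u},v_{t})-G(t,X_{t}^{u},p_{t}^{u},q_{t}^{u},u_{t})]+\frac{1}{2}\sum_{i=1}^{d}(\sigma^{i}(t,X_{t}^{u},v_{t})-\sigma^{i}(t,X_{t}^{u},u_{t}))^{\intercal}P_{t}^{u}(\sigma^{i}(t,X_{t}^{u},v_{t})-\sigma^{i}(t,X_{t}^{u},u_{t}))$ and bounding the quadratic piece by $\frac{1}{2}\left\Vert P^{u}\right\Vert _{\infty}|\sigma(t,X_{t}^{u},v_{t})-\sigma(t,X_{t}^{u},u_{t})|^{2}$---finite by Lemma \ref{adj-2nd-lem}---this piece is absorbed into the $\sigma$-square term of the bracket $C\{\cdots\}$, and (\ref{est-f-independent-yz}) follows.

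For the convergence I would mirror the proof of Theorem \ref{thm-convergence}, now with $\left\Vert f_{y}\right\Vert _{\infty}=0$ and $\mathbb{E}^{m}=\mathbb{E}$. Applying (\ref{est-f-independent-yz}) with $v(\cdot)=u^{m}(\cdot)$, $u(\cdot)=u^{m-1}(\cdot)$ and using, in self-evident shorthand along the $m$-th trajectory, $\mathcal{\hat{H}}_{m}=[G(u^{m})-G(u^{m-1})]+\frac{1}{2}\sum_{i}(\sigma^{i}(u^{m})-\sigma^{i}(u^{m-1}))^{\intercal}P^{m}(\sigma^{i}(u^{m})-\sigma^{i}(u^{m-1}))$, one bounds $\mathbb{E}[\int_{0}^{T}(G(u^{m})-G(u^{m-1}))\,dt]\leq\mu_{m}+C\,\mathcal{P}_{m}$, where $\mathcal{P}_{m}$ denotes the expected time-integral of $|b(u^{m})-b(u^{m-1})|^{2}+|\sigma(u^{m})-\sigma(u^{m-1})|^{2}+|G_{x}(u^{m})-G_{x}(u^{m-1})|^{2}$. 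The minimisation (\ref{def-u-n-step}) of $\mathcal{\tilde{H}}$ gives, pointwise, $\mathcal{\hat{H}}_{m}(t)+\frac{\rho}{2}\,\mathrm{pen}_{m}(t)\leq0$, where $\mathrm{pen}_{m}(t)$ is the full penalty bracket of (\ref{def-general-Hamiltonian}) evaluated at $(u_{t}^{m},u_{t}^{m-1})$; integrating yields $\mathcal{P}_{m}\leq-\frac{2}{\rho}\mu_{m}$, since $\mathcal{P}_{m}$ omits from $\mathrm{pen}_{m}$ only the nonnegative $f$-square part. Hence $J(u^{m}(\cdot))-J(u^{m-1}(\cdot))\leq(1-\frac{2C}{\rho})\mu_{m}\leq0$ for $\rho>2C$, the analogue of (\ref{Jum-Jum-1-decrease}) with $e^{-\left\Vert f_{y}\right\Vert _{\infty}T}=1$. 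Telescoping as in Theorem \ref{thm-convergence} gives $\sum_{m}(-\mu_{m})<\infty$, so $\mu_{m}\to0$ and the decreasing, bounded-below sequence $\{J(u^{m}(\cdot))\}_{m}$ converges to a local minimum of (\ref{state-eq})-(\ref{cost-func}), the hypothesis (\ref{local-SMP}) serving to identify this limit as a point satisfying the first-order maximum principle.
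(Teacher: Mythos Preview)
Your argument is correct and considerably more detailed than the paper's, which is terse. The principal difference in route concerns the second-order adjoint $P^{u}$. The paper invokes the local SMP hypothesis (\ref{local-SMP}) to work directly with $G$ in place of $\mathcal{H}$: it asserts that ``$P$ vanishes'' (meaning $P$ is unnecessary in the local-SMP framework of \cite{BDL-MSA-2020}), redefines the augmented Hamiltonian as
\[
\mathcal{\tilde{H}}(t,x,p,q,v,u)=G(t,x,p,q,v)+\tfrac{\rho}{2}\bigl(|b(v)-b(u)|^{2}+|\sigma(v)-\sigma(u)|^{2}+|G_{x}(v)-G_{x}(u)|^{2}\bigr),
\]
and then simply refers back to Lemma \ref{lem-costfunc-dominate} and Theorem \ref{thm-convergence}. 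You instead retain $P^{u}$ from the general framework and absorb the $P$-quadratic part of $\mathcal{\hat{H}}$ into the $\sigma$-square term via $\|P^{u}\|_{\infty}<\infty$ (Lemma \ref{adj-2nd-lem}). Your route has the merit of staying literally within Algorithm \ref{algorithm-MSA} and the $\mathcal{\tilde{H}}$ of (\ref{def-general-Hamiltonian}); in fact your derivation of (\ref{est-f-independent-yz}) and of the convergence never uses (\ref{local-SMP}) at all, so you have established a slightly stronger statement. The paper's route is shorter but tacitly modifies the algorithm to match the convex-domain setting it is comparing against. Your careful re-examination of $R_{4}$ to show that the $f$-square term genuinely does not arise (because $D^{2}f$ collapses to its $f_{xx}$ block and the BSDE estimate (\ref{est-YZv-YZu}) is never needed) is the right way to justify its absence---a point the paper's proof passes over.
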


\begin{proof}[\textbf{Proof}]
Since $f$ is independent of $y,z$ and $P$ vanishes, we have $\mathbb{\tilde{E}[\cdot]=E[\cdot]}$
and $\mathcal{H}(t,x,y,z,p,q,P,v,u)=G(t,x,p,q,v)$. In such case, the augmented Hamiltonian becomes
\[%
\begin{array}
[c]{cl}%
\mathcal{\tilde{H}(}t,x,p,q,v,u\mathcal{)}= & G(t,x,p,q,v)\\
& +\dfrac{\rho}{2}\left\vert b(t,x,v)-b(t,x,u)\right\vert ^{2}+\dfrac{\rho}%
{2}\left\vert \sigma(t,x,v)-\sigma(t,x,u)\right\vert ^{2}\\
& +\dfrac{\rho}{2}\left\vert G_{x}(t,x,p,q,v)-G_{x}(t,x,p,q,u)\right\vert
^{2}.
\end{array}
\]
Then, similar to the proof of Lemma \ref{lem-costfunc-dominate}, we update the
control by%
\[
u_{t}^{m}\in\arg\min_{u\in U}\mathcal{\tilde{H}(}t,X_{t}^{m},p_{t}^{m}%
,q_{t}^{m},u,u_{t}^{m-1}\mathcal{)}.
\]
and obtain estimate (\ref{est-f-independent-yz}).
By Theorem \ref{thm-convergence}, we get the convergence of Algorithm
\ref{algorithm-MSA} for $\rho>2C$.
\end{proof}

\begin{remark}
Without assuming that $D_{x}^{2}\sigma(t,x,u)=0$ for all $(t,x,u)\in
\lbrack0,T]\times\mathbb{R}^{n}\times U$, estimate (\ref{est-f-independent-yz}%
) is same as the one obtained by Lemma 2.3 in \cite{BDL-MSA-2020}. Actually,
control system (\ref{state-eq}) degenerates into the one studied in
\cite{BDL-MSA-2020} when $f$ is independent of $y,z$. Hence, we relax their
assumption and our model is a general case of theirs.
\end{remark}

\subsection{Convergence Rate in A Special Case}
In this section, we provide a case where the convergence rate is available.
Consider the following stochastic control problem:
over $\mathcal{U}[0,T]$, minimize
\begin{equation}
\label{eg-quasi-LQ-cost-func}
J\left(  u(\cdot)\right)  :=\frac{1}{2}\mathbb{E}\left[  \left(  X_{T}%
^{u}\right)  ^{\intercal}\Gamma X_{T}^{u}+\int_{0}^{T}\left[  \left(
X_{t}^{u}\right)  ^{\intercal}A_{t}X_{t}^{u}+\left(  u_{t}\right)
^{\intercal}B_{t}u_{t}\right]  dt\right]
\end{equation}
subject to
\begin{equation}
\left\{
\begin{array}
[c]{rl}%
dX_{t}^{u}= & \left(  b_{1}(t)X_{t}^{u}+b_{2}(t)\right)  dt+\sigma
(t,u_{t})dW_{t},\quad t\in\lbrack0,T],\\
X_{0}^{u}= & x_{0},
\end{array}
\right.  \label{eg-quasi-LQ-eq}%
\end{equation}
where $x_{0}\in\mathbb{R}^{n}$, $\Gamma\in\mathbb{S}^{n\times n}$,
$\sigma:[0,T]\times U\longmapsto\mathbb{R}^{n\times d}$; $b_{1}$ is an $n\times n$
matrix-valued, bounded, deterministic process; $b_{2}$ is an $n$-dimensional,
vector-valued, bounded, deterministic process; $A$, $B$ are respectively $n\times n$,
$k\times k$ matrix-valued, symmetric, bounded, deterministic processes.

Observe that, for any $u(\cdot)\in \mathcal{U}[0,T]$, (\ref{1st-adj-eq}) becomes
\[
p_{t}^{u}=\Gamma X_{T}^{u}+\int_{t}^{T}\left(  b_{1}^{\intercal}(s)p_{s}%
^{u}+A_{s}X_{s}^{u}\right)  ds-\sum_{i=1}^{d}\int_{t}^{T}\left(  q_{s}%
^{u}\right)  ^{i}dW_{s}^{i},\text{ \ }t\in\lbrack0,T],
\]%
and (\ref{2nd-adj-eq}) becomes
\[
P_{t}=\Gamma+\int_{t}^{T}\left[  b_{1}^{\intercal}(s)P_{s}+\left(  P_{s}\right)  ^{\intercal}b_{1}^{\intercal}(s)+A_{s}\right]  ds,\text{ \ }t\in\lbrack0,T].
\]
The Hamiltonian $\mathcal{H}$ turns out to be
\[%
\begin{array}
[c]{rl}%
\mathcal{H}(t,x,p,q,P,v,u)= & p^{\intercal}[b_{1}(t)x+b_{2}(t)]+\sum\limits_{i=1}^{d}\left(
q^{i}\right)  ^{\intercal}\sigma(t,v)+\frac{1}{2}\left(  x^{\intercal}%
A_{t}x+v^{\intercal}B_{t}v\right)  \\
& +\frac{1}{2}\left(  \sigma(t,v)-\sigma(t,u)\right)  ^{\intercal}P\left(
\sigma(t,v)-\sigma(t,u)\right)  .
\end{array}
\]
One can verify that, in (\ref{eta-1}), $\eta(T)=R_{1}(s)=R_{3}(s)=R_{4}(s)=R_{5}(s)=R_{6}(s)=0$, and
\[
G_{x}(s,X_{s}^{u},p_{s}^{u},q_{s}^{u},v_{s},u_{s})-G_{x}(s,X_{s}^{u},p_{s}%
^{u},q_{s}^{u},u_{s},u_{s})=0,
\]
which implies $\phi_{s}=0$ in (\ref{eta-2}).
Thus, (\ref{eta-2}) becomes
\[
\eta(t)=\int_{t}^{T}\mathcal{\hat{H}(}s\mathcal{)}ds-\sum
\limits_{i=1}^{d}\int_{t}^{T}\zeta^{i}(s)dW_{s}^{i},\text{ \ }t\in\lbrack0,T].
\]
Particularly, we have
\begin{equation}
\label{LQ-Jv-Ju}
J\left(  v(\cdot)\right)  -J\left(  u(\cdot)\right)  =\eta(0)=\mathbb{E}%
\left[  \int_{0}^{T}\mathcal{\hat{H}}(t)dt\right]  ,
\end{equation}
where $\mathcal{\hat{H}}(t)=\mathcal{H(}t,X_{t}^{u},p_{t}^{u},q_{t}^{u}%
,P_{t},v_{t},u_{t}\mathcal{)-H(}t,X_{t}^{u},p_{t}^{u},q_{t}^{u},P_{t}%
,u_{t},u_{t}\mathcal{)}$, $t\in\lbrack0,T]$.

Compared with (\ref{est-Jv-Ju}), the universal constant $C$ disappears in (\ref{LQ-Jv-Ju}).
Consequently, in Algorithm \ref{algorithm-MSA}, only if we update the control by $\mathcal{H}$ instead of $\tilde{\mathcal{H}}$ (i.e. $\rho=0$ in such case),
we can make $J$ decrease efficiently. Furthermore, we can obtain a $\frac{1}{m}$-order convergence rate if (\ref{eg-quasi-LQ-cost-func})-(\ref{eg-quasi-LQ-eq}) admits an optimal control $\bar{u}(\cdot)$.
This is illustrated by the following theorem.
\begin{theorem}
\label{thm-LQ-convergence-rate}
Let $\sigma$ satisfy (i) in Assumption \ref{assum-1}. Assume that (\ref{eg-quasi-LQ-cost-func})-(\ref{eg-quasi-LQ-eq}) admits an optimal control $\bar{u}(\cdot)\in \mathcal{U}[0,T]$.
Then $\tilde{\mathcal{H}} = \mathcal{H}$ so (\ref{def-u-n-step}) in Algorithm \ref{algorithm-MSA} becomes
\begin{equation}
\label{LQ-update-control-step}
u_{t}^{m} \in \arg\min_{u\in U}\mathcal{H}(t,X_{t}^{m},p_{t}^{m},q_{t}^{m},%
P_{t},u,u_{t}^{m-1}), \quad t\in\lbrack0,T],
\end{equation}
and there exists an positive integer $m_{0}$ such that,
for all $m \geq m_{0}$,
\begin{equation}
\label{rate-1-m}
0\leq J(u^{m-1}(\cdot))-J(\bar{u}(\cdot)) \leq \frac{C_{1}}{m},
\end{equation}
where $C_{1}=\max\left \{ J(u^{m_{0}-1}(\cdot))-J(\bar{u}(\cdot)), 1 \right \}$.
\end{theorem}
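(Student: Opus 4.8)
The plan is to monitor the optimality gap $\delta_{m}:=J(u^{m-1}(\cdot))-J(\bar{u}(\cdot))$ and to show that it is nonnegative, nonincreasing, and satisfies a quadratic recursion $\delta_{m}-\delta_{m+1}\geq C_{1}^{-1}\delta_{m}^{2}$, after which the $\tfrac{1}{m}$ rate follows from an elementary sequence estimate. The engine is the \emph{exact} identity (\ref{LQ-Jv-Ju}): in this quadratic problem every remainder ($\eta(T)$, $R_{1},\dots,R_{6}$, $\phi$) vanishes, so $J(v(\cdot))-J(u(\cdot))=\mathbb{E}[\int_{0}^{T}\hat{\mathcal{H}}(t)\,dt]$ holds with no universal constant on the right, and $\mathbb{E}^{m}=\mathbb{E}$ since $f_{z}\equiv0$.

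First I would read off two consequences of (\ref{LQ-Jv-Ju}). Taking $(v,u)=(u^{m},u^{m-1})$ gives $\delta_{m+1}-\delta_{m}=J(u^{m}(\cdot))-J(u^{m-1}(\cdot))=\mu_{m}$; as $u^{m}$ is a pointwise minimiser in (\ref{LQ-update-control-step}) one has $\hat{\mathcal{H}}_{m}(t)\leq0$, hence $\mu_{m}\leq0$, so $\{\delta_{m}\}$ is nonincreasing (and $\delta_{m}\geq0$ by optimality of $\bar{u}$). Taking instead $(v,u)=(\bar{u},u^{m-1})$ expresses the gap itself as $\delta_{m}=-\mathbb{E}[\int_{0}^{T}(\mathcal{H}(t,X_{t}^{m},p_{t}^{m},q_{t}^{m},P_{t},\bar{u}_{t},u_{t}^{m-1})-\mathcal{H}(t,X_{t}^{m},p_{t}^{m},q_{t}^{m},P_{t},u_{t}^{m-1},u_{t}^{m-1}))\,dt]$, i.e. along the very trajectory and adjoints that drive the minimisation. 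Since $u^{m}$ minimises $\mathcal{H}(t,X_{t}^{m},p_{t}^{m},q_{t}^{m},P_{t},\cdot,u_{t}^{m-1})$ over $U$ and $\bar{u}_{t}\in U$ is a competitor, the integrand defining $-\mu_{m}$ dominates the one defining $\delta_{m}$ pointwise, which gives the comparison $-\mu_{m}\geq\delta_{m}$.

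To reach the stated rate, put $C_{1}=\max\{\delta_{m_{0}},1\}$; monotonicity gives $\delta_{m}\leq C_{1}$ for $m\geq m_{0}$, so $\delta_{m}\geq C_{1}^{-1}\delta_{m}^{2}$ and, with the comparison, $\delta_{m}-\delta_{m+1}\geq C_{1}^{-1}\delta_{m}^{2}$. This is the hypothesis of the standard lemma for $O(1/m)$ decay: rewriting $\delta_{m+1}\leq\delta_{m}(1-\delta_{m}/C_{1})$ and using $\tfrac{1}{1-x}\geq1+x$ gives $\delta_{m+1}^{-1}\geq\delta_{m}^{-1}+C_{1}^{-1}$; telescoping from $m_{0}$ with $\delta_{m_{0}}^{-1}\geq C_{1}^{-1}$ yields $\delta_{m}\leq C_{1}/(m-m_{0}+1)$, which is the claimed $C_{1}/m$ for the appropriate $m_{0}$ (indeed $m_{0}=1$ is admissible here, since the comparison holds from the first iteration). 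At indices where $\delta_{m}=0$ the bound is trivial and, by monotonicity, persists.

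The main obstacle is not estimation but well-posedness of the minimisation and the legitimacy of applying (\ref{LQ-Jv-Ju}) with $v=\bar{u}$: since that identity is established for pairs of \emph{admissible} controls, one must check that the minimiser $u^{m}$ in (\ref{LQ-update-control-step}) exists and is admissible (so that the next iterate's state and adjoints, and the next application of (\ref{LQ-Jv-Ju}), are legitimate), which rests on the quadratic-in-$v$ Hamiltonian admitting a global minimiser over $U$ — a point controlled by the sign of the second-order adjoint $P_{t}$ entering $\mathcal{H}$. Once this is granted the comparison $-\mu_{m}\geq\delta_{m}$ is immediate and is in fact strong enough to force $\delta_{m+1}\leq0$; the $\tfrac{1}{m}$ statement is thus a robust, non-tight consequence, and the quadratic recursion is merely the weakest form that still delivers the clean $C_{1}/m$ bound.
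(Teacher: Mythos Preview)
Your argument is correct and in fact sharper than the paper's. Both proofs rest on the same two applications of the exact identity (\ref{LQ-Jv-Ju}): with $(v,u)=(u^{m},u^{m-1})$ to get $J(u^{m})-J(u^{m-1})=\mu_{m}$, and with $(v,u)=(\bar{u},u^{m-1})$ combined with the pointwise minimality of $u^{m}$ to get $J(u^{m-1})-J(\bar{u})\leq -\mu_{m}$ (this is the paper's (\ref{Jum-Jbaru-difference})). The paper, however, inserts an extra detour: it invokes Lemma~3.2 of \cite{BDL-MSA-2020} (a sub-interval localisation, which requires $|\mu_{m}|<\tfrac12$ and hence forces the introduction of $m_{0}$) to squeeze out $\mu_{m}\leq -\mu_{m}^{2}$, chains this with the second comparison to reach the recursion $a_{m}-a_{m-1}\leq -a_{m-1}^{2}$, and finally appeals to Lemma~A.1 of \cite{BDL-MSA-2020} for the $1/m$ decay. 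You bypass both cited lemmas by combining the two ingredients directly: $\delta_{m+1}=\delta_{m}+\mu_{m}\leq\delta_{m}-\delta_{m}=0$. Your observation that this forces $\delta_{m+1}=0$ --- i.e.\ the algorithm reaches the optimum in \emph{one step} --- is correct and strictly stronger than the theorem's $1/m$ claim; the paper has the same pieces in hand but does not draw this conclusion. The well-posedness caveat you flag (existence and admissibility of the argmin in (\ref{LQ-update-control-step})) is not addressed in the paper's proof either and is simply taken as part of the iterative scheme.
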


\begin{proof}
Similar to the proof of Theorem \ref{thm-convergence}, due to (\ref{LQ-update-control-step}), we have
$0 \geq \mu_{m}=\mathbb{E}\left[ \int_{0}^{T} \mathcal{\hat{H}}_{m}(t)dt \right] \rightarrow 0$ as $m \rightarrow \infty$,
where $\mathcal{\hat{H}}_{m}(t)=\mathcal{H}(t,X_{t}^{m},p_{t}^{m},q_{t}^{m}%
,P_{t},u_{t}^{m},u_{t}^{m-1})$ $-\mathcal{H}(t,X_{t}^{m},p_{t}^{m},q_{t}^{m}
,P_{t},u_{t}^{m-1},u_{t}^{m-1})$.
Thus, there exists a positive integer $m_{0}$ such that, for all $m \geq m_{0}$,
$0\leq -\mu_{m} < \frac{1}{2}$. By Lemma 3.2 in \cite{BDL-MSA-2020}, for any $m \geq m_{0}$,
there exists $t_{m} \in [0,T]$ such that
\begin{equation}
\label{small-interval}
\mathbb{E}\left[ \int_{I_{t_{m},\mu_{m}}} \mathcal{\hat{H}}_{m}(t)dt \right] \leq -\mu_{m}^{2},
\end{equation}
where $I_{t_{m},\mu_{m}}=\left[ t_{m}-\left \vert \mu_{m} \right \vert T, t_{m}+\left \vert \mu_{m} \right \vert T \right] \bigcap [0,T]$.

From (\ref{LQ-Jv-Ju}), (\ref{LQ-update-control-step}) and (\ref{small-interval}), for any $m \geq m_{0}$, we have
\begin{equation}
\label{Jm-difference-square-order}
J\left(u^{m}(\cdot)\right)  -J\left(u^{m-1}(\cdot)\right)  =\mathbb{E}%
\left[  \int_{0}^{T}\mathcal{\hat{H}}_{m}(t)dt\right]  \leq\mathbb{E}\left[
\int_{I_{t_{m},\mu_{m}}}\mathcal{\hat{H}}_{m}(t)dt\right]  \leq -\mu_{m}^{2}.
\end{equation}
On the other hand, since we assume $\bar{u}(\cdot)$ is optimal, applying (\ref{LQ-Jv-Ju}) to $u^{m-1}(\cdot)$, $\bar{u}(\cdot)$,
we obtain
\begin{equation}%
\begin{array}
[c]{l}%
\ \ \ J\left(  u^{m-1}(\cdot)\right)  -J\left(  \bar{u}(\cdot)\right)  \\
=-\left[  J\left(  \bar{u}(\cdot)\right)  -J\left(  u^{m-1}(\cdot)\right)
\right]  \\
=\mathbb{E}\left[  \int_{0}^{T}\left[  \mathcal{H(}t,X_{t}^{m},p_{t}^{m}%
,q_{t}^{m},P_{t},u_{t}^{m-1},u_{t}^{m-1})-\mathcal{H(}t,X_{t}^{m},p_{t}%
^{m},q_{t}^{m},P_{t},\bar{u}_{t},u_{t}^{m-1}\mathcal{)}\right]  dt\right]  \\
\leq\mathbb{E}\left[  \int_{0}^{T}\left[  \mathcal{H(}t,X_{t}^{m},p_{t}%
^{m},q_{t}^{m},P_{t},u_{t}^{m-1},u_{t}^{m-1}\mathcal{)}-\mathcal{H(}%
t,X_{t}^{m},p_{t}^{m},q_{t}^{m},P_{t},u_{t}^{m},u_{t}^{m-1}\mathcal{)}\right]
dt\right]  \\
=-\mu_{m}.
\end{array}
\label{Jum-Jbaru-difference}%
\end{equation}
Set $a_{m}=J\left(  u^{m}(\cdot)\right)  -J\left(  \bar{u}(\cdot)\right)$. Consequently, when $m \geq m_{0}$,
plugging (\ref{Jum-Jbaru-difference}) into (\ref{Jm-difference-square-order}), we have
$a_{m}-a_{m-1}\leq -a_{m-1}^{2}$.
Then, applying Lemma A.1 in \cite{BDL-MSA-2020} to the non-negative series $\left\{ a_{m} \right\}$,
we get $a_{m} \leq \frac{C_{1}}{m}$ with $C_{1}=\max\left \{ J(u^{m_{0}-1}(\cdot))-J(\bar{u}(\cdot)), 1 \right \}$.
\end{proof}

\begin{remark}
One can find that the terminal and running costs in (\ref{eg-quasi-LQ-cost-func}) have a quadratic form in $x$ and $u$, which are beyond the setting of the linear growth in Assumption \ref{assum-1}. Thus the first-order adjoint processes $\left( p^{u}, q^{u} \right)$ are not necessarily bounded in $L_{\mathcal{F}}^{\infty}([0,T];\mathbb{R}^{n}) \times \mathcal{K}(\mathbb{R}^{n\times d})$ across all admissible controls. But we can obtain (\ref{LQ-Jv-Ju}) without using the uniformly bounded property of $\left( p^{u}, q^{u} \right)$.
\end{remark}

\subsection{Finding the Near-Optimal Control in A Special Case}
We give a sufficient condition of the near-optimality for a class of linear forward-backward stochastic recursive problems,
by which we can determine whether the returned control $u^{m_{\epsilon}-1}(\cdot)$ introduced in Theorem \ref{thm-convergence},
for some positive integer $m_{\epsilon}$ depending on $\epsilon$, is a near-optimal control.

Let $U\subset\mathbb{R}^{k}$ is convex and compact. Consider the following
stochastic recursive control problem: over $\mathcal{U}[0,T]$, minimize
$J(u(\cdot)):=Y_{0}^{u}$ subject to
\begin{equation}
\left\{
\begin{array}
[c]{rl}%
dX_{t}^{u}= & \left[  b_{1}(t)X_{t}^{u}+b_{2}(t)u_{t}+b_{3}(t)\right]
dt+\sum\limits_{i=1}^{d}\left[  \sigma_{1}^{i}(t)X_{t}^{u}+\sigma_{2}%
^{i}(t)u_{t}+\sigma_{3}^{i}(t)\right]  dW_{t}^{i},\\
dY_{t}^{u}= & -\left[  f_{1}(t)X_{t}^{u}+f_{2}(t)Y_{t}^{u}+f_{3}%
(t,u_{t})\right]  dt+\left(  Z_{t}^{u}\right)  ^{\intercal}dW_{t},\\
X_{0}^{u}= & x_{0},\text{ \ }Y_{T}^{u}=\alpha^{\intercal} X_{T}^{u}+\gamma,
\end{array}
\right.  \label{example-suffi-cond}%
\end{equation}
where $x_{0},\alpha\in\mathbb{R}^{n}$, $\gamma\in\mathbb{R}$, $b_{1}$, $b_{2}$, $b_{3}$,
$\left(  \sigma_{1}^{i}\right)  _{i=1,\ldots d}$, $\left(  \sigma_{2}%
^{i}\right)  _{i=1,\ldots d}$, $\left(  \sigma_{3}^{i}\right)  _{i=1,\ldots d}$, $f_{1}$, $f_{2}$, $f_{3}$ are all deterministic
processes in suitable sizes. We further assume that

\begin{assumption}
\label{assum-2} (i) $b_{1}$, $b_{2}$, $\left( \sigma_{1}^{i}, \sigma_{2}^{i}, \sigma_{3}^{i} \right)$ for $i=1,\ldots d$,
$f_{1}$, $f_{2}$ are bounded; $f_{3}$ is bounded by $L(1+\left\vert u\right\vert )$
with a given positive number $L$;

(ii) $f_{3}$ is continuously differentiable, convex with respect to $u$;

(iii) $(f_{3})_{u}$ is bounded and Lipschitz continuous with respect to $u$.
\end{assumption}
If Assumption \ref{assum-2} holds, then Assumption \ref{assum-1} holds for
(\ref{example-suffi-cond}) naturally. Thus, under Assumption \ref{assum-2}, by
Theorem \ref{thm-convergence}, the $\epsilon$-minimum condition (\ref{delta-H-minimum}) holds.

Let $u^{m_{\epsilon}-1}(\cdot)$ be the returned control defined in Theorem \ref{thm-convergence}
corresponding to a given permissible error $\epsilon>0$, and $\left(
X^{m_{\epsilon}},Y^{m_{\epsilon}}\right)  $ be the corresponding state trajectory.
The following theorem implies that, under Assumption \ref{assum-2}, (\ref{delta-H-minimum}) is sufficient for the near-optimality of order $\epsilon^{\frac{1}{2}}$.

\begin{theorem}
\label{thm-suffi-cond} Let Assumption \ref{assum-2} hold. Then
\[
J\left(  u_{{}}^{m_{\epsilon}-1}(\cdot)\right)  -\inf_{u(\cdot)\in
\mathcal{U}[0,T]}J\left(  u(\cdot)\right)  \leq\tilde{C}\epsilon^{\frac{1}{2}%
},
\]
where $\tilde{C}$ is a positive constant independent of $\epsilon$.
\end{theorem}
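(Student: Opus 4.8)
The plan is to exploit two special features of (\ref{example-suffi-cond}): the generator $f=f_1X+f_2Y+f_3(t,u)$ is independent of $z$, and the terminal map $x\mapsto\alpha^\intercal x+\gamma$ is affine. The first makes the Girsanov change in Lemma \ref{lem-costfunc-dominate} trivial, so $\mathbb{E}^{m}=\mathbb{E}$; the second gives $\Phi_{xx}=0$, whence the second-order adjoint (\ref{2nd-adj-eq}) reduces to a homogeneous linear ODE with zero terminal value and therefore $P^{m}\equiv0$. Writing $w:=u^{m_\epsilon-1}(\cdot)$ with adjoints $(p,q)=(p^{m_\epsilon},q^{m_\epsilon})$, every remainder term $\eta(T),R_1,\dots,R_6,\phi$ in the derivation behind (\ref{est-Jv-Ju}) vanishes by linearity, so that identity collapses to the exact formula $J(v(\cdot))-J(w)=\mathbb{E}[\int_0^T\Lambda_t\,\Delta G(t;v)\,dt]$, where $\Lambda_t=\exp\{\int_0^t f_2(s)ds\}\in[e^{-\|f_2\|_\infty T},e^{\|f_2\|_\infty T}]$ and $\Delta G(t;v)=\langle a_t,v_t-w_t\rangle+f_3(t,v_t)-f_3(t,w_t)$ with $a_t:=b_2^\intercal(t)p_t+\sum_{i=1}^d(\sigma_2^i(t))^\intercal q_t^i$. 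I also record that $\tilde{\mathcal{H}}(t,\dots,v,w)-\tilde{\mathcal{H}}(t,\dots,w,w)=\Delta G(t;v)+\mathrm{Pen}(t;v)$ with the nonnegative penalty $\mathrm{Pen}(t;v)=\tfrac\rho2\{|b_2(v-w)|^2+\sum_i|\sigma_2^i(v-w)|^2+|f_3(v)-f_3(w)|^2\}$.

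Next I would show that $J$ is convex on the convex set $\mathcal U[0,T]$. As the forward equation is linear in $(X,u)$, the map $u(\cdot)\mapsto X^{u}(\cdot)$ is affine; the terminal value $\alpha^\intercal X_T+\gamma$ is then affine in $u$, and the generator is convex in $u$ (by convexity of $f_3$, Assumption \ref{assum-2}(ii)) and linear in $Y$. Comparing the BSDE solved by $(1-\theta)Y^{u^0}+\theta Y^{u^1}$ with the one solved by $Y^{(1-\theta)u^0+\theta u^1}$ and applying the comparison theorem yields $J((1-\theta)u^0+\theta u^1)\le(1-\theta)J(u^0)+\theta J(u^1)$. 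Because the linearised forward–backward system is linear, the one-sided directional derivative of $J$ is linear in the direction and equals $DJ(w)[h]=\mathbb{E}[\int_0^T\Lambda_t\langle\Xi_t,h_t\rangle dt]$, where $\Xi_t:=(f_3)_u(t,w_t)+a_t$ (the $\theta\to0$ limit of the identity above, using $f_3\in C^1$ and $(f_3)_u$ bounded). Convexity then gives, for every $v(\cdot)\in\mathcal U[0,T]$, the bound $J(w)-J(v(\cdot))\le DJ(w)[w-v]=\mathbb{E}[\int_0^T\Lambda_t\langle\Xi_t,w_t-v_t\rangle dt]$.

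The core step is to extract an $\epsilon^{1/2}$ estimate from (\ref{delta-H-minimum}). For arbitrary $v(\cdot)\in\mathcal U[0,T]$ and $\theta\in(0,1]$, apply (\ref{delta-H-minimum}) with the admissible competitor $w+\theta(v-w)$ (admissible since $U$ is convex). The term $\theta\langle a_t,v-w\rangle$ is exactly linear in $\theta$; since $f_3\in C^{1,1}$ (Assumption \ref{assum-2}(iii)) the first-order Taylor remainder of $f_3$ is $O(\theta^2|v-w|^2)$, and $\mathrm{Pen}(t;w+\theta(v-w))\le C\theta^2|v-w|^2$. Using $|v_t-w_t|\le\mathrm{diam}(U)$ I obtain $-K\epsilon\le\theta\,\mathbb{E}[\int_0^T\langle\Xi_t,v_t-w_t\rangle dt]+C'\theta^2$, with $K=(e^{-\|f_2\|_\infty T}-2C/\rho)^{-1}$. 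Minimising the right-hand side over $\theta$ (taking $\theta\sim\sqrt\epsilon$) yields the \emph{unweighted} estimate $\mathbb{E}[\int_0^T\langle\Xi_t,v_t-w_t\rangle dt]\ge-\tilde C_0\,\epsilon^{1/2}$, uniformly in $v(\cdot)$. Taking a measurable selection $v^*_t\in\arg\min_{u\in U}\langle\Xi_t,u\rangle$, for which $\langle\Xi_t,v^*_t-w_t\rangle=\min_{u\in U}\langle\Xi_t,u-w_t\rangle\le0$, this becomes $\mathbb{E}[\int_0^T|\min_{u\in U}\langle\Xi_t,u-w_t\rangle|\,dt]\le\tilde C_0\,\epsilon^{1/2}$.

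Finally I would reconcile the weight $\Lambda_t$ appearing in the cost derivative with the unweighted estimate above; this mismatch is the main obstacle. For any $v(\cdot)\in\mathcal U[0,T]$ one has pointwise $\langle\Xi_t,w_t-v_t\rangle\le-\min_{u\in U}\langle\Xi_t,u-w_t\rangle=|\min_{u\in U}\langle\Xi_t,u-w_t\rangle|$, and since $0<\Lambda_t\le e^{\|f_2\|_\infty T}$ this gives $\Lambda_t\langle\Xi_t,w_t-v_t\rangle\le e^{\|f_2\|_\infty T}|\min_{u\in U}\langle\Xi_t,u-w_t\rangle|$ (trivially where the left side is negative). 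Integrating and combining with the convexity bound yields $J(w)-J(v(\cdot))\le e^{\|f_2\|_\infty T}\tilde C_0\,\epsilon^{1/2}$ for every admissible $v$, hence $J(u^{m_\epsilon-1}(\cdot))-\inf_{u(\cdot)}J(u(\cdot))\le\tilde C\epsilon^{1/2}$ with $\tilde C:=e^{\|f_2\|_\infty T}\tilde C_0$, and no attainment of the infimum is required. The rate $\epsilon^{1/2}$ is precisely the balance between the linear Hamiltonian gap and the quadratic augmentation penalty forced by the $\theta$-optimisation, while the decisive device for removing $\Lambda_t$ is the pointwise minimiser $v^*$, which converts the weighted inequality into a one-signed one.
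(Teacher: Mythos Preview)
Your proof is correct and takes a genuinely different route from the paper. After deriving the exact identity $J(v)-J(w)=\mathbb E\bigl[\int_0^T\Lambda_t\,\hat G(t)\,dt\bigr]$ (their (\ref{LQ-Jv-Ju-1}) and the discussion around it) and the weighted $\epsilon$-minimum condition (\ref{inf-difference-G}), the paper does \emph{not} carry out the near-optimality argument itself: it observes that $(-\Gamma_t,\Gamma_t p_t,0)$ coincides with the adjoint triple of \cite{Wang-Guangchen} and invokes Theorem~4.1 there as a black box. Your argument, by contrast, is self-contained: you prove convexity of $J$ via the BSDE comparison theorem, read off its G\^ateaux derivative from the exact cost identity, and then extract the $\epsilon^{1/2}$ rate by testing (\ref{delta-H-minimum}) along the ray $w+\theta(v-w)$ and optimising in $\theta$, the $C^{1,1}$ regularity of $f_3$ in Assumption~\ref{assum-2}(iii) supplying the quadratic Taylor remainder. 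The measurable-selection step with $v^*_t\in\arg\min_{u\in U}\langle\Xi_t,u\rangle$ to neutralise the deterministic weight $\Lambda_t$ is a clean device that makes the one-signed bound possible.

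Two minor observations that do not affect correctness. First, in this model the first-order adjoint equation has deterministic data and deterministic terminal value $\alpha$, so in fact $q^{m}\equiv 0$; your $a_t$ therefore reduces to $b_2^\intercal(t)p_t$ and $\Xi_t$ is deterministic. Second, since all remainder terms in (\ref{est-Jv-Ju}) vanish here, the constant $C$ there is zero and one may take $\rho=0$ (as the paper does); your argument still goes through unchanged, the $O(\theta^2)$ term then coming solely from the Taylor remainder of $f_3$. What your approach buys is independence from the external reference and transparency about exactly where each hypothesis (convexity and compactness of $U$, convexity and Lipschitz gradient of $f_3$) is used; the paper's route is shorter on the page but hides the mechanism producing the $\epsilon^{1/2}$ rate inside the cited result.
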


At first, for any $u(\cdot)\in\mathcal{U}[0,T]$, the first-order adjoint
equation (\ref{1st-adj-eq}) degenerates into the following linear ordinary differential equation
(ODE for short)
\[
p_{t}=\alpha+\int_{t}^{T}\left\{  \left[  f_{2}(s)+b_{1}(s)\right]
p_{s}+f_{1}(s)\right\}  ds,\text{ \ }t\in\lbrack0,T],
\]
which admits a unique solution $p\in C^{1}([0,T];\mathbb{R}^{n})$, and the
second-order adjoint equation (\ref{2nd-adj-eq}) vanishes. Then, due to the vanishing of
$z$, $q$ and $P$, we have $\tilde{\mathbb{E}}[\cdot]=\mathbb{E}[\cdot]$ and
$\mathcal{H(}t,x,y,z,p,q,P,v,u\mathcal{)}=G(t,x,y,p,v)$, where $\mathcal{H}$
is introduced by (\ref{def-general-Hamiltonian}), $\tilde{\mathbb{E}}[\cdot]$ is defiend in Lemma \ref{lem-costfunc-dominate}
and
\begin{equation}
\label{suffi-G}
G(t,x,y,p,v)=p^{\intercal}\left[  b_{1}(t)x+b_{2}(t)v + b_{3}(t)\right]  +f_{1}%
(t)x+f_{2}(t)y+f_{3}(t,v).
\end{equation}
For any $u(\cdot)$, $v(\cdot)\in\mathcal{U}[0,T]$, by following the proof of
Lemma \ref{lem-costfunc-dominate}, one can deduce that, in the BSDE (\ref{eta-3}), $\eta(T)=0$, $R_{i}(s)=0$
for $i=1,\ldots,6$ and
\[
G_{x}(s,X_{s}^{u},Y_{s}^{u},p_{s},v_{s})-G_{x}(s,X_{s}^{u},Y_{s}^{u}%
,p_{s},u_{s})=0,
\]
which implies $\phi_{s}=0$ in (\ref{eta-3}). Thus (\ref{eta-3}) becomes
\[
\eta(t)=\int_{t}^{T}\hat{G}(s)ds-\sum\limits_{i=1}^{d}\int_{t}^{T}\zeta
^{i}(s)dW_{s}^{i},\text{ \ }t\in\lbrack0,T],
\]
where $\hat{G}(t)=G(t,X_{t}^{u},Y_{t}^{u},p_{t},v_{t})-G(t,X_{t}^{u},Y_{t}%
^{u},p_{t},u_{t}\mathcal{)}$, $t\in\lbrack0,T]$. Then, similar to (\ref{Jv-Ju-origin}), one
can deduce
\begin{equation}
J\left(  v(\cdot)\right)  -J\left(  u(\cdot)\right)  =\eta(0)\leq
e^{\left\Vert f_{2}\right\Vert_{\infty} T}\mathbb{E}\left[  \int_{0}^{T}\hat{G}%
^{+}(t)dt\right]  -e^{-\left\Vert f_{2}\right\Vert_{\infty} T}\mathbb{E}\left[
\int_{0}^{T}\hat{G}^{-}(t)dt\right]  ,\label{LQ-Jv-Ju-1}%
\end{equation}
where $\hat{G}^{+}(\cdot)$, $\hat{G}^{-}(\cdot)$ are respectively the
positive, negative part of $\hat{G}(\cdot)$. Compared with (\ref{est-Jv-Ju}), the
universal constant $C$ disappears in (\ref{LQ-Jv-Ju-1}). Consequently, in
Algorithm \ref{algorithm-MSA}, only if we update the control by
\begin{equation}
u_{t}^{m}\in\arg\min_{u\in U}G(t,X_{t}^{m},Y_{t}^{m},p_{t}%
,u)\label{control-update-G}%
\end{equation}
instead of minimizing $\tilde{\mathcal{H}}$ (i.e. $\rho=0$ in such case), we
can make $J$ decrease efficiently. Hence, the $\epsilon$-minimum condition
(\ref{delta-H-minimum}) implies
\begin{equation}
\mathbb{E}\left[  \int_{0}^{T}\left[  G(t,X_{t}^{m},Y_{t}^{m},p_{t}%
,u_{t}^{m})-G(t,X_{t}^{m},Y_{t}^{m},p_{t},u_{t}^{m-1})\right]  dt\right]
\geq-e^{\left\Vert f_{2}\right\Vert _{\infty}T}\epsilon.
\label{delta-G-minimum}%
\end{equation}

Introduce the following ODE:
\[
\left\{
\begin{array}
[c]{rl}%
d\Gamma_{t}= & f_{2}(t)\Gamma_{t}dt,\text{ \ }t\in\lbrack0,T],\\
\Gamma_{0}= & 1.
\end{array}
\right.
\]
Since $f_{2}$ is bounded, we have $e^{-\left\Vert f_{2}\right\Vert _{\infty}%
T}\leq\left\Vert \Gamma ^{-1}\right\Vert _{\infty}\leq
e^{\left\Vert f_{2}\right\Vert _{\infty}T}$. Then it follows from
(\ref{control-update-G}) that
\begin{equation}%
\begin{array}
[c]{rl}
& G(t,X_{t}^{m},Y_{t}^{m},p_{t},u_{t}^{m})-G(t,X_{t}^{m},Y_{t}^{m},p_{t},u_{t}%
^{m-1})\\
\leq & e^{-\left\Vert f_{2}\right\Vert _{\infty}T}\Gamma_{t}\left[
G(t,X_{t}^{m},Y_{t}^{m},p_{t},u_{t}^{m})-G(t,X_{t}^{m},Y_{t}^{m},p_{t},u_{t}%
^{m-1})\right]  .
\end{array}
\label{Gamma-difference-H}%
\end{equation}
Therefore, combining (\ref{delta-G-minimum}) with (\ref{Gamma-difference-H}), we
obtain, for all $u(\cdot) \in \mathcal{U}[0,T]$,
\begin{equation}
\mathbb{E}\left[  \int_{0}^{T}\Gamma_{t}\left[  G\mathcal{(}t,X_{t}^{m}%
,Y_{t}^{m},p_{t},u_{t}\mathcal{)}dt-G\mathcal{(}t,X_{t}^{m},Y_{t}^{m}%
,p_{t},u_{t}^{m}\mathcal{)}\right]  dt\right]  \geq-e^{2\left\Vert f_{2}\right\Vert _{\infty}T}\epsilon
.\label{inf-difference-G}%
\end{equation}
One can check that, by applying the formula of integration by parts to $\Gamma p$, the triple $\left( -\Gamma_{t}, \Gamma_{t} p_{t}, 0 \right)_{t\in[0,T]}$ are adjoint processes uniquely solving (9) in \cite{Wang-Guangchen} and then (\ref{inf-difference-G}) is nothing but (16) in Theorem 4.1 in \cite{Wang-Guangchen}. Moreover, since $f_{3}$ is convex in $u$, $G(t,\cdot,\cdot,p,\cdot)$ is also convex in $(x,y,v)$ for any $(t,p) \in [0,T] \times \mathbb{R}^{n}$, which verifies the condition of Theorem 4.1 in \cite{Wang-Guangchen}.
Consequently, applying that theorem, we obtain the desired result.

Particularly, when $x_{0}=\alpha=\gamma=b_{1}=b_{2}=b_{3}=\sigma_{1}^{i}=\sigma_{2}^{i}=\sigma_{3}^{i}=\Phi=f_{1}=0$, $i=1,\ldots,d$, $f_{2}=\beta$ for some given constant $\beta>0$, and $f_{3}$ is independent of $t$, this example is closely related to the standard, continuous-time additive utility model in the theory of the stochastic differential recursive utility (see \cite{Epstein92, Karoui97} for more details).

\section{Numerical Demonstration}
In this section, a numerical demonstration is given to illustrate the general results obtained in the above sections.
\begin{example}
\label{example-1}
Let $n=d=k=1$, $U=\left\{ 0,1 \right\}$, $x_{0}=0$, $T=1$, $b=0$, $\sigma=u$, $f=\sin(Lz)$ and $\Phi=Lx$ for some given constant $0<L\leq \sqrt{\pi}$.
By Theorem \ref{thm-maximum principle}, the corresponding SMP reads
\begin{equation}
\label{example-global-SMP}
\sin \left( L\bar{Z}_{t}+L^{2}(u-\bar{u}_{t}) \right) \geq \sin \left( L\bar{Z}_{t}\right),\quad \forall u \in U, \quad dt \otimes d\mathbb{P}\text{-a.e.},
\end{equation}
where (\ref{1st-adj-eq}) degenerates into a constant equation such that $p_{t}\equiv L$ for all $t \in [0,T]$, and (\ref{2nd-adj-eq}) vanishes.
One can verify (\ref{example-global-SMP}) is also sufficient for the optimality due to the comparison theorem of BSDEs.
Thus it follows from $\sin(L^{2}u) \geq 0, \forall u \in U$ that $\left( \bar{X}, \bar{Y}, \bar{Z}, \bar{u}(\cdot) \right) = \left( 0, 0, 0, 0 \right)$ is an optimal quadruple
and the optimal cost $J(\bar{u}(\cdot))=0$.

The Hamiltonian $\mathcal{H}:[0,1]\times\mathbb{R}\times U \times U \longmapsto \mathbb{R}$ is defined by $\mathcal{H}(t,z,v,u)=\sin \left( Lz+L^{2}(v-u) \right)$.
Then, following from the proof of Lemma \ref{lem-costfunc-dominate}, one can define the augmented Hamiltonian $\tilde{\mathcal{H}}:[0,1]\times\mathbb{R}\times
U\times U\longmapsto\mathbb{R}$ by $\tilde{\mathcal{H}}(t,z,v,u)= \mathcal{H}(t,z,v,u) +\frac{\rho}{2}\left(  v-u\right)  ^{2}$, where
$\rho=10L^{4}\left[ 1+ (1+L^{2})(1+8L^{2}e^{8L^{2}}) \right]$ is determined by a careful estimate of the constant $C$ appearing in (\ref{est-Jv-Ju}) of Lemma \ref{lem-costfunc-dominate}.
\end{example}

In our numerical computation, we discretize the time interval $[0,1]$ into $20$ intervals $\Delta_{i}:=[\frac{i-1}{20}, \frac{i}{20}]$, $i=1,\ldots,20$.
By generating random numbers with values $0$ or $1$ on each $\Delta_{i}$,
we can approximately get an initial control $u^{0}(\cdot)$ over $[0,1]$. Then we put this $u^{0}(\cdot)$ into the program based on the Monte Carlo algorithm to solve the FBSDE in (\ref{state-eq})-(\ref{cost-func}) numerically (please refer to \cite{Gobet} for the details).
The following two figures illustrate the performance of Algorithm \ref{algorithm-MSA} corresponding to different choices of $L$.
The horizontal and vertical coordinates represent the times of iterations $m$ and the values of the cost $J(u^{m-1}(\cdot))$.

\begin{figure}[h]
  \centering
  \begin{minipage}{0.48 \linewidth}
  \vspace{3pt}
  \centerline{\includegraphics[height=5cm,width=6.5cm]{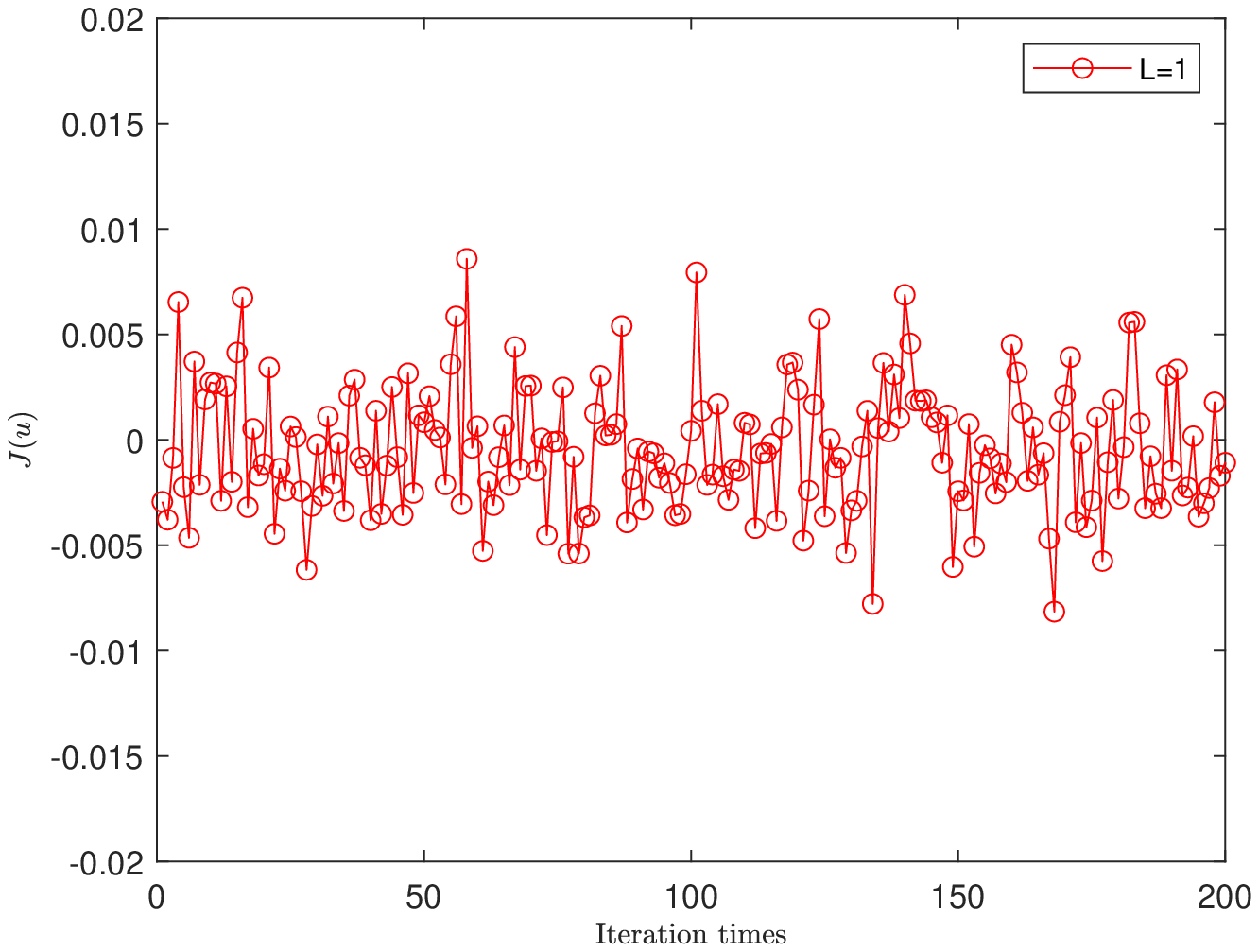}}
  \centerline{The case $L=1$.}
  \end{minipage}
  \begin{minipage}{0.48 \linewidth}
  \vspace{3pt}
  \centerline{\includegraphics[height=5cm,width=6.5cm]{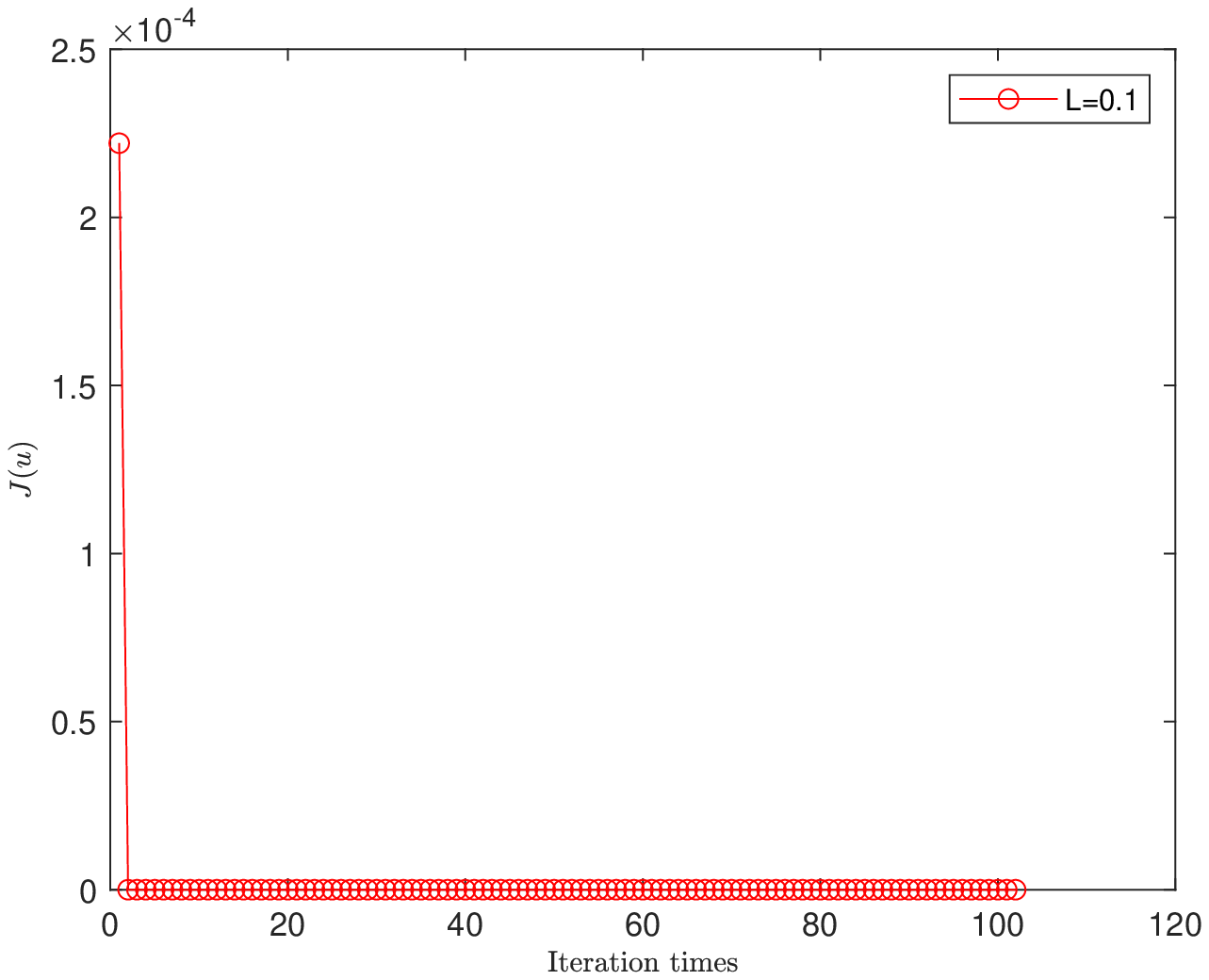}}
  \centerline{The case $L=0.1$.}
  \end{minipage}
\caption{The performance of Algorithm \ref{algorithm-MSA} corresponding to different choices of $L$.}
\label{figure-1}
\end{figure}

In Figure \ref{figure-1}, the graph on the left-hand side indicates that $J(u^{m-1}(\cdot))$ fluctuates up and down around the optimal cost $0$
as $m$ increases. The graph on the right-hand side indicates that $J(u^{0}(\cdot))=2.25 \times 10^{-4}$ at the initial time and it descends to the optimal cost $0$ rapidly after one iterative step, and then $J(u^{m-1}(\cdot))$ remains steady at $0$ as $m$ increases. One can note that the convergence is rapid and sharp.

In conclusion, when $L$ is relatively small, Algorithm \ref{algorithm-MSA} converges to the minimum of Example \ref{example-1} ($L=0.1$).
This demonstrates that the modified MSA can indeed help us find the optimum for some simple or specific stochastic recursive control problems.
However, it shows some fluctuation of the sequence $\{J(u^{m}(\cdot))\}_{m}$ around the optimal value when we set a relatively larger $L$ ($L=1$). The reason for this
phenomenon may involve the error of discretizing the time interval and the computational error of solving the FBSDE in Example \ref{example-1} by the numerical method.


\begin{thebibliography}{99}                                                                                               %
\bibitem {Pontryagin}V. G. Boltyanski, R. V. Gamkrelidze and L. S Pontryagin,
\textit{On the theory of optimal processes}. Dokl. Akad. Nauk SSSR, 10 (1956), pp. 7-10.

\bibitem {Carmona}R. Carmona, \textit{Lectures on BSDEs, Stochastic Control, and
Stochastic Differential Games with Financial Applications}. Ringgold, Inc, 2016.

\bibitem {AAL2}F. L. Chernousko and A. A. Lyubushin, \textit{Method of successive
approximations for solution of optimal control problems}. Optimal Control
Applications and Methods, 3 (1982), pp. 101-114.

\bibitem {Delbaen-Tang2010}F. Delbaen and S. Tang, \textit{Harmonic analysis of
stochastic equations and backward stochastic differential equations}.
Probability Theory and Related Fields, 146 (2010), pp. 291-336.

\bibitem {Epstein92}Duffie, D. and L. Epstein, \textit{Stochastic Differential
Utility}. Econometrica, 60(2) (1992), pp. 353--394.

\bibitem {Fuhrman-Hu13}M. Fuhrman, Y Hu and G. Tessitore, \textit{Stochastic maximum
principle for optimal control of SPDEs}. Applied Mathematics and Optimization,
68 (2013), pp. 181-217.

\bibitem {Gobet} E. Gobet, J-P. Lemor and X. Warin, \textit{A regression-based Monte Carlo method to solve backward stochastic differential equations}.
Annals of Applied Probability, 15 (2005), pp. 2172-2202.

\bibitem {HWY}S. He, J. Wang and J. Yan, \textit{Semimartingale Theory and Stochastic
Calculus}. Science Press, Beijing 1992.

\bibitem {Hu17}M. Hu, \textit{Stochastic global maximum principle for optimization
with recursive utilities}. Probability, Uncertainty and Quantitative Risk, 2(1)
(2017), pp. 1-20.

\bibitem {HuJiXue18}M. Hu, S. Ji and X. Xue, \textit{A global stochastic maximum
principle for fully coupled forward-backward stochastic systems}. SIAM Journal
on Control and Optimization, 56(6) (2018), pp. 4309-4335.

\bibitem {HuYing-Peng95}Y. Hu and S. Peng, \textit{Solution of forward-backward
stochastic differential equations}. Probability Theory and Related Fields,
103(2) (1995), pp. 273-283.

\bibitem {Hu-Tang2016}Y. Hu and S. Tang, \textit{Multi-dimensional backward stochastic
differential equations of diagonally quadratic generators}. Stochastic
Processes and Their Applications, 126 (2016), pp. 1066-1086.

\bibitem {Wang-Guangchen}E. Hui, J. Huang, X. Li and G. Wang, \textit{Near-optimal control for stochastic recursive problems}.
Systems \& Control Letters, 60 (2011), pp. 161-168.

\bibitem {Karoui97}N. El Karoui, S. Peng and M. C. Quenez, \textit{Backward stochastic
differential equations in finance}. Mathematical Finance, 7(1) (1997), pp. 1-71.

\bibitem {Kazamaki}N. Kazamaki, \textit{Continuous exponential martingales and BMO}.
Springer-Verlag Berlin Heidelberg, 1994.

\bibitem {BDL-MSA-2020}B. Kerimkulov, D. \v{S}i\v{s}ka and \L . Szpruch,
\textit{A modified MSA for stochastic control problems}. Applied Mathematics and Optimization, 84(3) (2021), 3417–3436.

\bibitem {BDL-Howard-2020}B. Kerimkulov, D. \v{S}i\v{s}ka and \L . Szpruch,
\textit{Exponential convergence and stability of Howard's policy improvement algorithm
for controlled diffusions}. SIAM Journal on Control and Optimization, 58(3)
(2020), pp. 1314-1340.

\bibitem {IA-Krylov1}I. A. Krylov and F. L. Chernousko, \textit{On the method of
successive approximations for solution of optimal control problems}.
Computational Mathematics and Mathematical Physics, 2(6) (1962), pp. 1371-1382.

\bibitem {IA-Krylov2}I. A. Krylov and F. L. Chernousko, \textit{Algorithm of the
method of successive approximations for optimal control problems}.
Computational Mathematics and Mathematical Physics, 12(1) (1972), pp. 15-38.

\bibitem {Quenez03}A. Lazrak and M. C. Quenez, \textit{A generalized stochastic
differential utility}. Mathematics of Operations Research, 28(1) (2003), pp. 154-180.

\bibitem {WeinanE-2018}Q. Li, L. Chen, C. Tai, and W. E, \textit{Maximum principle
based algorithms for deep learning}. Journal of Machine Learning Research,
18(165) (2018), pp. 1-29.

\bibitem {AAL1}A. A. Lyubushin, \textit{Modifications and convergence investigation of
method of successive approximations for optimal control problems}.
Computational Mathematics and Mathematical Physics, 19(6) (1979), pp. 53-61.

\bibitem {Ma-Yong-Protter}J. Ma, P. Protter and J. Yong, \textit{Solving
forward-backward stochastic differential equations explicitly - a 4 step
scheme}. Probability Theory and Related Fields, 98 (1994), pp. 339-359.

\bibitem {Peng90}S. Peng, \textit{A general stochastic maximum principle for optimal
control problems}. SIAM Journal on Control and Optimization, 28(4) (1990), pp. 966-979.

\bibitem {Peng93}S. Peng, \textit{Backward stochastic differential equations and
applications to optimal control}. Applied mathematics \& optimization, 27(2)
(1993), pp. 125-144.

\bibitem {Peng99}S. Peng, \textit{Open problems on backward stochastic differential
equations}. Control of Distributed Parameter and Stochastic Systems, 13 (1999), pp. 265-273.

\bibitem {Protter90}P. Protter, \textit{Stochastic Differential Equations: A New
Approach}. Springer-Verlag, Berlin, 1990.

\bibitem {Tang98}S. Tang, \textit{The maximum principle for partially observed optimal
control of stochastic differential equations}. SIAM Journal on Control and
Optimization, 36(5) (1998), pp. 1596--1617.

\bibitem {Tang-Li94}S. Tang and X. Li, \textit{Necessary conditions for optimal
control of stochastic systems with random jumps}. SIAM Journal on Control and
Optimization, 32(5) (1994), pp. 1447-1475.

\bibitem {YongZhou}J. Yong and X. Zhou, \textit{Stochastic Controls: Hamiltonian
Systems and HJB Equations}. Springer, 1999.

\bibitem {Zhang17}J. Zhang, \textit{Backward Stochastic Differential Equations: From
Linear to Fully Nonlinear Theory (Vol. 86)}. Springer, New York, 2017.

\bibitem {Zhou-XY}X. Zhou, Stochastic near-optimal controls: necessary and sufficient conditions for near-optimality.
SIAM Journal on Control and Optimization, 36(3) (1998), pp. 929-947.

\end{thebibliography}
\end{document}